\def\fatnorm#1{|\kern-.2ex|\kern-.2ex| #1 |\kern-.2ex|\kern-.2ex|}
\newcommand{\twonorm}[1]{\left\lVert#1\right\rVert_2}
\newcommand{\fnorm}[1]{\left\lVert#1\right\rVert_F}
\newcommand{\norm}[1]{\left\lVert#1\right\rVert}
\newcommand{\abs}[1]{\left\lvert#1\right\rvert}
\newcommand{\cov}{\textsf{Cov}}
\newcommand{\off}{{\rm offd}}
\newcommand{\half}{\ensuremath{\frac{1}{2}}}
\newcommand{\inv}[1]{\frac{1}{#1}}
\def\conv{\mathop{\text{\rm conv}\kern.2ex}}
\newcommand{\ip}[1]{\;\langle{\,#1\,}\rangle\;}
\newcommand{\onenorm}[1]{\ensuremath{\left|#1\right|_1}}
\newcommand{\expct}[1]{\ensuremath{\mathbb E}#1}
\newcommand{\silent}[1]{}
\newcommand{\ve}{\varepsilon}
\def\qed{\hskip1pt $\;\;\scriptstyle\Box$}
\def\Ber{\mathop{\text{Bernoulli}\kern.2ex}}
\def\supp{\mathop{\text{supp}\kern.2ex}}
\def\corr{\mathop{\text{corr}\kern.2ex}}
\def\prec{\mathop{\text{precision}\kern.2ex}}
\def\recall{\mathop{\text{recall}\kern.2ex}}
\def\cov{\mathop{\text{Cov}\kern.2ex}}
\def\mnorm{\mathcal{N}_{f,m}\kern.2ex}
\def\var{\mathop{\text{Var}\kern.2ex}}
\def\ess{\mathop{\text{ess}\kern.2ex}}
\def\dom{\mathop{\text{dom}\kern.2ex}}
\def\lin{\mathop{\text{lin}\kern.2ex}}
\newcommand{\func}[1]{\ensuremath{\mathrm{#1}}}
\newcommand{\diag}{\func{diag}}
\newcommand{\offd}{\func{offd}}
\let\tilde\widetilde
\newcommand{\vecp}{\ensuremath{{\bf p}}}
\newcommand{\tr}{{\rm tr}}
\def\E{{\mathbb E}}
\def\supp{\mathop{\text{\rm supp}\kern.2ex}}
\def\argmin{\mathop{\text{arg\,min}\kern.2ex}}
\newcommand{\prob}[1]{\ensuremath{\mathbb P}\left(#1\right)}
\newcommand{\beq}{\begin{equation}}
\newcommand{\eeq}{\end{equation}}
\newcommand{\ben}{\begin{eqnarray}}
\newcommand{\een}{\end{eqnarray}}
\newcommand{\bnum}{\begin{enumerate}}
\newcommand{\enum}{\end{enumerate}}
\newcommand{\bit}{\begin{itemize}}
\newcommand{\eit}{\end{itemize}}
\newcommand{\bens}{\begin{eqnarray*}}
\newcommand{\eens}{\end{eqnarray*}}
\newcommand{\R}{{\bf R}}
\newcommand{\f}{\tilde{f}}
\newtheorem{theorem}{Theorem}[section]
\newtheorem{lemma}[theorem]{Lemma}
\newtheorem{remark}[theorem]{Remark}
\newtheorem{corollary}[theorem]{Corollary}
\def\qed{\hskip1pt $\;\;\scriptstyle\Box$}
\newenvironment{proofof}[1]{\hspace*{20pt}{\it Proof}{ of #1}.\hskip10pt}{\qed\vskip5pt}
\newenvironment{proofof2}{\hskip10pt}{\qed\vskip5pt}
\begin{document}
\title{Sparse Hanson-Wright inequalities for subgaussian quadratic  forms}
\author{Shuheng Zhou \\
Department of Statistics, University of Michigan, Ann Arbor, MI  \\
TR539, October 2015\\[20pt]
This Version: }

\maketitle

\begin{abstract}

In this paper, we provide a proof for the Hanson-Wright inequalities for
sparse quadratic forms in subgaussian random variables.
This provides useful concentration inequalities for sparse subgaussian random
vectors in two ways.
Let $X = (X_1, \ldots, X_m) \in \R^m$ be a random vector with
independent subgaussian components, and $\xi =(\xi_1,
\ldots, \xi_m) \in \{0, 1\}^m$ be independent Bernoulli random variables.
We prove the large deviation bound for a sparse quadratic form of 
$(X \circ \xi)^T A (X \circ \xi)$, where $A \in \R^{m \times m}$ is an
$m \times m$ matrix, and random vector $X \circ \xi$ denotes the
Hadamard  product of an isotropic subgaussian random vector $X \in \R^m$
and a random vector $\xi  \in \{0, 1\}^m$ 
such that $(X \circ \xi)_{i} = X_{i}  \xi_i$, where $\xi_1, \ldots,\xi_m$ are independent Bernoulli 
random variables.  The second type of sparsity in a quadratic form comes from the setting
where we randomly sample the elements of an anisotropic subgaussian vector $Y =
H X$ where $H \in \R^{m\times  m}$ is an $m \times m$ symmetric matrix;
we study the large deviation bound on the $\ell_2$-norm
$\twonorm{D_{\xi} Y}^2$ from its expected value, where for a given
vector $x \in \R^m$,  $D_{x} = \diag(x)$ denotes the diagonal matrix whose main diagonal entries are the
entries of $x$. This form arises naturally from the context of
covariance estimation.
\end{abstract}

{\bf Keywords:} Hanson-Wright inequality; Subgaussian concentration;
Sparse quadratic forms. \\
 
AMS 2010 Subject Classification: 60B20.

\section{Introduction}
\label{sec::intro}
In this paper, we explore the concentration of measure results for
quadratic forms involving a sparse subgaussian random vector $X \in \R^m$.
Sparsity can naturally come from the fact that the high dimensional
vector $X \in \R^m$ is sparse, for example, when the elements of $X$
are missing at random, or when we intentionally sparsify the vector
$X$ to speed up computation. The purpose of the paper is to prove the
Hanson-Wright type of large deviation bounds for sparse quadratic
forms in Theorems~\ref{thm::HWsparse} and \ref{thm::MNsparse}.

Sparsity comes in two forms. In Theorem~\ref{thm::HWsparse}, we
randomly sparsify the subgaussian vector $X$ involved in the quadratic
form $X^T A X$, where $X = (X_1, \ldots, X_m) \in \R^m$ is a random vector with
independent subgaussian components, and $\xi =(\xi_1, \ldots, \xi_m)
\in \{0, 1\}^m$ consists of independent Bernoulli random variables. 
In particular, we first consider  $(X \circ \xi)^T A(X \circ \xi)$,  
where $X \circ \xi \in \R^m$ denotes the Hadamard  product of random
vectors $X$ and $\xi$ such that $(X \circ \xi)_{i} = X_{i}  \xi_i$ and $A$ is an
$m \times m$ matrix.
The second type of sparsity comes into play when we sample the
elements of an anisotropic subgaussian random vector $Y = D_0 X$ where $X \in
\R^m$ is as defined in Theorem~\ref{thm::HWsparse} and $D_0 \in
\R^{m\times m}$ is an $m \times m$ symmetric matrix.
 
The bound in Theorem~\ref{thm::MNsparse} 
allows the second type of sparsity in a quadratic form in the following sense.
Suppose $A_0$ is an $m \times m$ symmetric positive semidefinite matrix and
$A_0^{1/2}$ is the unique square root of $A_0$. Suppose we randomly sample the rows or
columns of $A_0^{1/2}$ to construct a quadratic form as follows,
\ben
\label{eq::sparse2}
X^T A_0^{1/2}A_0^{1/2} X \to X^T A_0^{1/2} D_{\xi} A_0^{1/2} X.
\een
We state in Theorem~\ref{thm::MNsparse}, where we replace $A_0^{1/2}$
with $D_0$, a symmetric $m \times m$ matrix, 
the large deviation bound for the sparse quadratic form on the right hand side of \eqref{eq::sparse2}. 
These questions arise naturally in the context of covariance estimation problems, where we naturally take $A_0$ and $D_0$ as symmetric positive (semi)definite matrices.

The following definitions correspond to Definitions 5.7 and 5.13
in~\cite{Vers12}.
For a random variable $X$, the sub-gaussian (or $\psi_2$)
 norm of $X$ denoted by $\norm{X}_{\psi_2}$ is defined to be~\cite{Vers12}:
\bens
\label{eq::defineK}&&
\norm{X}_{\psi_2} = \sup_{p \ge 1} p^{-1/2}
(\mathbb{E}\abs{X}^p)^{1/p} \; \;
\text{which  is the smallest $K_2$} \\
&&
\; \; \; \; \; \text{ which satisfies } \;  
\left(\mathbb{E}\abs{X}^p\right)^{1/p }\le K_2\sqrt{p} \;\;\;  \forall p
\ge 1;  \\
&&\label{eq::defineC}
\text{if} \; \; \E[X]=0, \; \text{then} \; \; \E\exp(tX) \le \exp(C
t^2 \norm{X}_{\psi_2}^2 ) \; \; \text{for all } \; \; t \in \R.
\eens

For a symmetric matrix $A = (a_{ij}) \in \R^{m \times m}$, we let 
$\lambda_{\max}(A)$ and $\lambda_{\min}(A)$ denote the largest and the
smallest eigenvalue of $A$ respectively. 
Moreover, we order the $m$ eigenvalues algebraically and denote them by
\bens
\lambda_{\min}(A) = \lambda_1(A) \le \lambda_2(A) \le \ldots \le
\lambda_m(A) =\lambda_{\max}(A).
\eens
For a matrix $A$, the operator norm $\twonorm{A}$
is defined to be $\sqrt{\lambda_{\max}(A^T A)}$. Let $\fnorm{A} =
(\sum_{i, j} a_{ij}^2)^{1/2}$. Let $\diag(A)$ be the diagonal of $A$.  Let $\offd(A)$ be the off-diagonal of $A$. 
For matrix $A$, $r(A)$ denotes the effective rank  ${\tr(A)}/{\twonorm{A}}$.
For two numbers $a, b$, $a \vee b := \max(a, b)$.

In particular, we prove:
\begin{theorem}
\label{thm::HWsparse}
Let $X = (X_1, \ldots, X_m) 
\in \R^m$ be a random vector with independent components
 $X_i$ which satisfy $\expct{X_i} = 0$ and $\norm{X_i}_{\psi_2} \leq K$. 
Let $\xi = (\xi_1, \ldots, \xi_m) \in \{0, 1\}^m$ be a random vector
independent of $X$, with independent Bernoulli random variables $\xi_i$ such that
$\E(\xi_i) = p_i$. Let $A =(a_{ij})$ be an $m \times m$ matrix. Then, for every $t > 0$,
\ben
\nonumber
\lefteqn{
\prob{\abs{(X \circ \xi)^T A (X \circ \xi)
 - \expct{(X \circ \xi)^T A (X \circ \xi)}
} > t} \leq } \\
\label{eq::q1}
& &  2\exp \left(- c\min\left(\frac{t^2}{K^4 \left(\sum_{k=1}^m p_k
      a_{kk}^2 + \sum_{i\not=j} a_{ij}^2 p_i p_j \right)
},    \frac{t}{K^2 \twonorm{A}} \right)\right) 
\een
where $X \circ \xi$ denotes the Hadamard  product of random vectors
$X$ and $\xi$ such that $(X \circ \xi)_{i} = X_{i}  \xi_i$.
\end{theorem}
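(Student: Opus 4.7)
The plan is to adapt the Rudelson-Vershynin moment-generating-function proof of the classical Hanson-Wright inequality, tracking the Bernoulli factors carefully. Writing $Y_i := X_i \xi_i$ and $Q := (X \circ \xi)^T A (X \circ \xi)$, I would split
$$Q - \E Q \;=\; \underbrace{\sum_k a_{kk}(X_k^2 \xi_k - p_k \E X_k^2)}_{D} \;+\; \underbrace{\sum_{i \ne j} a_{ij} X_i X_j \xi_i \xi_j}_{O},$$
where $\E[O] = 0$ by independence and centeredness of the $X_i$. The diagonal and off-diagonal pieces correspond precisely to the two summands $\sum_k p_k a_{kk}^2$ and $\sum_{i \ne j} p_i p_j a_{ij}^2$ in \eqref{eq::q1}; I would bound their tails separately and combine via a union bound.

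For $D$, each summand is centered sub-exponential: $\|X_k^2\|_{\psi_1} \lesssim K^2$, multiplying by the bounded $\xi_k$ preserves this, and critically the variance carries a Bernoulli factor, $\Var(a_{kk} X_k^2 \xi_k) \lesssim p_k K^4 a_{kk}^2$. Bernstein's inequality for sums of independent sub-exponentials then yields
$$\prob{|D| > t/2} \le 2 \exp\!\left(-c \min\!\left(\tfrac{t^2}{K^4 \sum_k p_k a_{kk}^2},\, \tfrac{t}{K^2 \max_k |a_{kk}|}\right)\right),$$
which is absorbed into the claimed bound via $\max_k |a_{kk}| \le \twonorm{A}$.

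For $O$, I would combine decoupling with a conditional sub-Gaussian MGF estimate. Let $(X', \xi')$ be an independent copy, set $Y_j' := X_j' \xi_j'$, and let $\tilde A$ denote $A$ with zeroed diagonal. By the de la Pe{\~n}a--Gin{\'e} decoupling inequality for diagonal-free chaos,
$$\E \exp(\lambda O) \le \E \exp\!\Big(4\lambda \sum_{i,j} \tilde a_{ij} Y_i Y_j'\Big).$$
Condition on $Y$; independence of the $Y_j'$ factorizes the inner expectation, and the key ingredient is the Bernoulli-refined subgaussian MGF bound
$$\E \exp(s Y_j') \;=\; 1 + p_j\bigl(\E e^{s X_j'} - 1\bigr) \;\le\; \exp\bigl(p_j(\E e^{s X_j'} - 1)\bigr) \;\le\; \exp(C p_j s^2 K^2),$$
valid whenever $s^2 K^2 \le c$ (from $1+u \le e^u$ and $e^u - 1 \le 2u$ on $[0,1]$). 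Applied with $s = 4\lambda b_j$, $b_j := (\tilde A^T Y)_j$, this gives the conditional bound $\exp\bigl(C \lambda^2 K^2 \, Y^T \tilde A D_p \tilde A^T Y\bigr)$ where $D_p = \diag(p_1, \ldots, p_m)$, with uniform validity ensured by restricting to $\lambda^2 K^2 \max_j b_j^2 \le c$, which holds provided $\lambda^2 K^2 \twonorm{A}^2 \twonorm{Y}^2 \le c$.

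To conclude, I would take expectation over $Y$ of $\exp(C\lambda^2 K^2 Y^T M Y)$ with $M := \tilde A D_p \tilde A^T \succeq 0$. Since the $Y_i$ are independent $K$-subgaussian (no sparsity structure needed at this step), the classical Hanson-Wright inequality applies directly to $(Y, M)$; combined with the readily-checked bounds $\E[Y^T M Y] \le K^2 \tr(D_p M) = K^2 \sum_{i \ne j} p_i p_j a_{ij}^2$, $\twonorm{M} \le \twonorm{A}^2$, and $\fnorm{M}^2 = \tr(M^2) \le \twonorm{A}^2 \sigma^2_{\mathrm{off}}$, this produces $\E \exp(\lambda O) \le \exp(C \lambda^2 K^4 \sigma^2_{\mathrm{off}})$ in the regime $|\lambda| \le c/(K^2 \twonorm{A})$, where $\sigma^2_{\mathrm{off}} := \sum_{i \ne j} p_i p_j a_{ij}^2$. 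Markov's inequality then delivers the required sub-Gaussian/sub-exponential tail for $O$, and combining with the diagonal bound completes the proof. I expect the main technical obstacle to be the bookkeeping in this last step---ensuring that the self-referential Hanson-Wright application on $Y^T M Y$ does not dissolve the sparsity factor $\sigma^2_{\mathrm{off}}$ into something larger like $\fnorm{A}^2$---together with the careful range-tracking that keeps the Bernoulli MGF refinement valid uniformly in $j$ throughout.
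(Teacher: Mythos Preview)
Your diagonal argument is fine and matches the paper's in spirit. The off-diagonal argument, however, has a genuine gap at the step you yourself flag as the main obstacle.

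The refined MGF bound $\E e^{sY_j'} \le \exp(Cp_j s^2 K^2)$ is only valid for $|s|K \lesssim 1$: writing $\E e^{sY_j'} = 1 + p_j(\E e^{sX_j'}-1)$ and using $e^u - 1 \le 2u$ requires $u = Cs^2K^2$ to be bounded. You apply it with $s = 4\lambda b_j$, $b_j = (\tilde A^T Y)_j$, and claim uniform validity via $\lambda^2 K^2 \twonorm{A}^2 \twonorm{Y}^2 \le c$. But $\twonorm{Y}$ is a random, unbounded subgaussian quantity, so for any fixed $\lambda>0$ this fails with positive probability; the conditional inequality $\E_{Y'}[\cdot \mid Y] \le \exp(C\lambda^2 K^2 Y^T M Y)$ therefore holds only on a random event, and you cannot simply take $\E_Y$ on both sides. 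Note also that one cannot drop the range restriction: by convexity $1+p_j(e^u-1) \ge e^{p_j u}$, so the inequality genuinely fails for large $u$.

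A second, related issue: your claimed bound $\fnorm{M}^2 \le \twonorm{A}^2 \sigma_{\mathrm{off}}^2$ with $M = \tilde A D_p \tilde A^T$ does not follow from the obvious route $\fnorm{M}^2 \le \twonorm{M}\,\tr(M)$, since $\tr(M) = \sum_j p_j \sum_{i\ne j} a_{ij}^2$ carries only one $p$-factor. If you replace it by this weaker bound, the $\mu^2\fnorm{M}^2$ contribution in your Hanson--Wright step no longer collapses to $\sigma_{\mathrm{off}}^2$.

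The paper's proof circumvents exactly this difficulty by inserting the Gaussian comparison from Rudelson--Vershynin \emph{before} integrating the Bernoullis. After decoupling in $\delta$, it first conditions on $(\xi,\delta,X_{\Lambda_\delta})$ and uses the \emph{unrestricted} subgaussian MGF bound (no $p_j$ extracted yet), obtaining $\exp(C\lambda^2\sigma_{\delta,\xi}^2)$ with $\sigma_{\delta,\xi}^2 = \sum_{j\in\Lambda_\delta^c}\xi_j(\sum_{i\in\Lambda_\delta}a_{ij}X_i\xi_i)^2$. A Gaussian $g$ is introduced to rewrite this MGF, the roles are swapped, and after integrating out both $X$ and $g$ one is left with $\E_\xi\exp\bigl(C\lambda^2\sum_{i\in\Lambda_\delta}\xi_i\sum_{j\in\Lambda_\delta^c}a_{ij}^2\xi_j\bigr)$, a bilinear form in the \emph{bounded} variables $\xi$ alone. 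Only now are the Bernoullis integrated out, and the required range condition on $\lambda$ becomes the deterministic $|\lambda|\lesssim 1/\twonorm{A}$. Both factors $p_i,p_j$ emerge in this final step. The Gaussian detour is thus not cosmetic: it is what decouples the $\lambda$-range constraint from the unbounded $X$-randomness.
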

Let $\xi$ be as defined in Theorem~\ref{thm::HWsparse}.
We now randomly sample entries of a correlated subgaussian random vector $Y = D_0 X$
and study the large deviation bound on the norm of $\twonorm{D_{\xi}
  Y}^2$ from its expected value in Theorem~\ref{thm::MNsparse}, where
for a given $x \in \R^m$,  $D_{x} = \diag(x)$ denotes the diagonal matrix whose main diagonal entries are
the elements of $x$. And we write $D_x := \diag(x)$ interchangeably.
Partition a symmetric matrix $D_0 \in \R^{m \times m}$ according to its columns as
$D_0 = [d_1, d_2, \ldots, d_m]$. Denote by
\ben
\label{eq::eigendec}
A_0 & := & D_0^2 = \sum_{i=1}^m d_i d_i^T = (a_{ij}) \succeq 0.
\een
The bounds in Theorem~\ref{thm::HWsparse} and
Theorem~\ref{thm::MNsparse} reduce to essentially the same type.
\begin{theorem}
\label{thm::MNsparse}
Let $D_{\xi}$ be a diagonal matrix with elements from the random
vector $\xi \in \{0, 1\}^m$, where $\E \xi_j = p_j$, for $0 \le p_j
\le 1$.  Let $X$ be as defined in Theorem~\ref{thm::HWsparse}, independent of $\xi$.
Let $A_0 = (a_{ij}) =D_0^2$.  
Let $Y = D_0 X$. Then, for every $t > 0$,
\bens
\label{eq::sparseYoffd}
\lefteqn{\prob{\abs{Y^T D_{\xi} Y - \E Y^T D_{\xi} Y} > t}  =: \prob{\abs{S}>t}} \\
& \le & 
\nonumber
2\exp \left(-c_2\min\left(\frac{t^2}{K^4 (\sum_{i=1}^m p_i a^2_{ii} + \sum_{i\not=j} a^2_{ij} p_i p_j)},
    \frac{t}{K^2 \twonorm{A_0}} \right)\right)
\eens
where $c_2, C$ are some absolute constants.
\end{theorem}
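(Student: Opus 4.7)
The plan is to condition on $\xi$, apply the classical subgaussian Hanson-Wright MGF bound to the $X$-part, and then integrate out $\xi$ using a separate Bernoulli MGF bound. Start by rewriting the target as a quadratic form in $X$ with a random matrix:
\[
Y^T D_\xi Y = X^T M X, \qquad M := D_0 D_\xi D_0 = \sum_{i=1}^m \xi_i d_i d_i^T.
\]
Two features of $M$ drive the argument. First, since $M \preceq A_0$ in the Loewner order almost surely, $\twonorm{M} \leq \twonorm{A_0}$ holds deterministically. Second, using $d_i^T d_j = (D_0^2)_{ij} = a_{ij}$ from \eqref{eq::eigendec}, one computes
\[
\fnorm{M}^2 = \tr(M^2) = \sum_{i,j} \xi_i \xi_j a_{ij}^2,
\]
whose expectation equals exactly the Gaussian denominator $\mu := \sum_i p_i a_{ii}^2 + \sum_{i \neq j} p_i p_j a_{ij}^2$ in the claimed tail.

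Decompose $S = S_a + S_b$ with $S_a := X^T M X - \E_X[X^T M X \mid \xi]$ and $S_b := \E_X[X^T M X \mid \xi] - \E[X^T M X]$. The term $S_b$ is linear in the Bernoullis: writing $\sigma_i^2 := \var(X_i) \leq 2K^2$ (from $\norm{X_i}_{\psi_2} \leq K$) and $M_{ii} = \sum_k \xi_k (D_0)_{ik}^2$, one gets $S_b = \sum_k (\xi_k - p_k) b_k$ with $b_k := \sum_i \sigma_i^2 (D_0)_{ik}^2 \leq 2K^2 a_{kk}$. Bernstein's inequality for independent centered Bernoullis immediately yields a tail dominated by the claim, since $\sum_k p_k b_k^2 \lesssim K^4 \sum_k p_k a_{kk}^2 \leq K^4 \mu$ and $\max_k b_k \lesssim K^2 \twonorm{A_0}$.

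For $S_a$, work at the level of the MGF. Conditional on $\xi$, the standard subgaussian Hanson-Wright MGF estimate applied to the deterministic matrix $M$ gives
\[
\E_X \exp(\lambda S_a) \leq \exp\!\bigl(C K^4 \lambda^2 \fnorm{M}^2\bigr) \quad \text{for all } |\lambda| \leq c/(K^2 \twonorm{M}),
\]
and the range of $\lambda$ is deterministic because $\twonorm{M} \leq \twonorm{A_0}$. Integrating over $\xi$ reduces the task to an MGF bound of the form $\E_\xi \exp(\alpha Q) \leq \exp(C' \alpha \mu)$ in the regime $\alpha \twonorm{A_0}^2 \lesssim 1$, where $Q := \sum_{i,j} \xi_i \xi_j a_{ij}^2$ and $\alpha := CK^4 \lambda^2$. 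Setting $\alpha = CK^4\lambda^2$, this smallness condition translates back to the same admissible range $|\lambda| \lesssim 1/(K^2 \twonorm{A_0})$ already required above, so the two bounds combine cleanly; a standard Chernoff optimization then delivers the desired sub-exponential tail for $S_a$, and the triangle inequality combines with the $S_b$ bound to finish.

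The main technical obstacle lies in this last Bernoulli-MGF estimate. The approach is to split $Q = \sum_i \xi_i a_{ii}^2 + \sum_{i \neq j} \xi_i \xi_j a_{ij}^2$. The linear part is controlled directly through the independent-Bernoulli identity $\E e^{t\xi_i} = 1 + p_i(e^t - 1) \leq \exp(2p_i t)$ valid for $t \leq 1$, yielding $\E\exp(\alpha \sum_i \xi_i a_{ii}^2) \leq \exp(2\alpha \sum_i p_i a_{ii}^2)$ as soon as $\alpha \max_i a_{ii}^2 \leq 1$. The quadratic Bernoulli piece requires more care: centering $\tilde\xi_i := \xi_i - p_i$ (which has $\psi_2$-norm $\leq 1$) and expanding turns it into the sum of a constant $\sum_{i \neq j} p_i p_j a_{ij}^2$, a linear term in $\tilde\xi$, and a centered quadratic form $\tilde\xi^T B \tilde\xi$ with $B_{ij} = a_{ij}^2$; applying a Hanson-Wright MGF estimate to the quadratic piece and a Bernstein MGF to the linear one, and using the crude bounds $\max_{i,j} a_{ij}^2 \leq \twonorm{A_0}^2$ and $\sum_i a_{ij}^2 = (A_0^2)_{jj} \leq \twonorm{A_0}^2$ to control admissibility, yields the required $\E_\xi\exp(\alpha Q) \leq \exp(C'\alpha \mu)$. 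The delicate point is keeping the cutoff on $\alpha$ at the correct scale $1/\twonorm{A_0}^2$ rather than the smaller $1/\fnorm{A_0}^2$ or $1/\max_i a_{ii}^2$ that a careless argument would produce; this is what ensures the final sub-exponential parameter involves $\twonorm{A_0}$ and not a strictly larger quantity.
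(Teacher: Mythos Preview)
Your overall architecture matches the paper's: write $Y^TD_\xi Y=X^T A_\xi X$ with $A_\xi=D_0D_\xi D_0$, peel off a linear-in-$\xi$ piece (your $S_b$, the paper's $S_\star$), and for the remaining piece condition on $\xi$, apply the subgaussian Hanson--Wright MGF bound to get $\exp(C K^4\lambda^2\fnorm{A_\xi}^2)$ with $\fnorm{A_\xi}^2=\xi^T(A_0\circ A_0)\xi$, and then average over $\xi$. The paper splits the conditional step into diagonal and off-diagonal $X$-sums before reducing to the same Bernoulli quadratic, but that is a cosmetic difference; both routes land on the task of showing $\E_\xi\exp(\alpha Q)\le\exp(C'\alpha\mu)$ for $Q=\sum_{i,j}\xi_i\xi_j a_{ij}^2$ and $\mu=\sum_i p_i a_{ii}^2+\sum_{i\neq j}p_ip_j a_{ij}^2$ in the range $\alpha\lesssim 1/\twonorm{A_0}^2$.

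The gap is precisely in this Bernoulli step. After centering $\tilde\xi_i=\xi_i-p_i$, applying the \emph{standard} Hanson--Wright MGF bound to $\tilde\xi^T B\tilde\xi$ with $B_{ij}=a_{ij}^2\mathbf 1_{i\neq j}$ only yields $\exp(C\alpha^2\fnorm{B}^2)$, and $\fnorm{B}^2=\sum_{i\neq j}a_{ij}^4$ carries no $p_ip_j$ factors. In the regime $\alpha\asymp 1/\twonorm{A_0}^2$ one gets at best $\alpha^2\fnorm{B}^2\lesssim\sum_{i\neq j}a_{ij}^2/\twonorm{A_0}^2$, which for small $p_i$ can dominate $\alpha\mu$ by an arbitrary factor (take $A_0$ the all-ones matrix and $p_i=p\ll 1/m$ to see this). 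The $\psi_2$-norm bound $\|\tilde\xi_i\|_{\psi_2}\le 1$ is simply too crude to recover the variance scaling $\sigma_i^2\sigma_j^2\le p_ip_j$ that the target demands. This is exactly why the paper proves a dedicated Bernoulli MGF estimate (Theorem~\ref{thm::Bernmgf}): its proof decouples the centered quadratic and exploits $\E Z_i^2=\sigma_i^2$ and $\E|Z_i|=2\sigma_i^2$ at the level of elementary exponential expansions, producing the bound $\exp\bigl(\tfrac13\alpha\sum_{i\neq j}a_{ij}^2\sigma_i^2\sigma_j^2\bigr)\le\exp(C\alpha\mu)$. Your sketch correctly identifies the admissibility cutoff as the delicate point, but the actual obstruction is in the exponent: you need a variance-sensitive quadratic MGF bound for Bernoullis, not the black-box subgaussian one.
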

Theorem~\ref{thm::Bernmgf} 
shows a concentration of measure bound on a
quadratic form with Bernoulli random variables where an explicit
dependency on $p_i$, for all $i$, is shown. The setting here is
different from Theorem~\ref{thm::HW} as we deal with a quadratic form
which involves non-centered Bernoulli random variables.
Theorem~\ref{thm::Bernmgf} is crucial in proving
Theorem~\ref{thm::MNsparse}.
\begin{theorem}
\label{thm::Bernmgf}
Let $\xi = (\xi_1, \ldots, \xi_m) \in \{0, 1\}^m$ be a random vector
with independent Bernoulli random variables $\xi_i$ such that
$\xi_i = 1$ with probability $p_i$ and $0$ otherwise.
Let $A = (a_{ij})$ be an $m \times m$ matrix. 
Then, for every $0 \le \lambda \le \inv{104 \max(\norm{A}_1, \norm{A}_{\infty})}$,
\bens
\lefteqn{
\E \exp\left(\lambda \sum_{i,j} a_{ij}  \xi_i \xi_j \right)
\le
\exp\left(\lambda
\left(\sum_{i=1}^m a_{ii} p_i 
+\sum_{i\not= j} a_{ij}  p_i p_j \right)\right)
*} \\
&& \exp\left(\frac{1}{3}\lambda\sum_{j \not=i}\abs{a_{ij}} \sigma^2_i 
\sigma^2_j \right) * 
\exp\left(C_5 \lambda \left(\half\sum_{i=1}^m \abs{a_{ii}} p_i  +\sum_{j\not=i}  \abs{a_{ij}} p_jp_j 
\right)  \right)
\eens
where $\sigma_i^2 = p_i(1-p_i)$ and $C_5 \le 0.04$.
\end{theorem}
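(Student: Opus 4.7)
My plan is to reduce to centered Bernoulli variables and then bound the resulting linear and quadratic pieces, treating $\xi_i$ as a bounded centered variable with variance $\sigma_i^2 = p_i(1-p_i)$. Set $\eta_i = \xi_i - p_i$, so $\E\eta_i = 0$, $\abs{\eta_i}\le 1$, and $\var\eta_i = \sigma_i^2$. Using $\xi_i^2 = \xi_i$ for the diagonal and expanding $\xi_i\xi_j = (p_i+\eta_i)(p_j+\eta_j)$ off-diagonal, one obtains the decomposition
\[
\sum_{i,j} a_{ij}\xi_i\xi_j \;=\; \mu \;+\; \tilde L \;+\; \tilde Q,
\]
where $\mu = \sum_i a_{ii} p_i + \sum_{i\ne j} a_{ij} p_i p_j$ is the mean furnishing the first factor of the target bound, $\tilde L = \sum_i b_i \eta_i$ is a linear form with coefficients $b_i = a_{ii} + \sum_{j\ne i}(a_{ij}+a_{ji})p_j$, and $\tilde Q = \sum_{i\ne j} a_{ij}\eta_i\eta_j$ is a centered, off-diagonal quadratic form in bounded centered variables. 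Pulling out $e^{\lambda\mu}$ matches the first factor exactly; what remains is to bound $\E e^{\lambda(\tilde L+\tilde Q)}$ by the product of the remaining two factors.

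For the linear part $\tilde L$, by independence $\log\E e^{s\tilde L} = \sum_i \log\E e^{s b_i \eta_i}$. Since each $\eta_i$ is centered, bounded by $1$, and has variance $\sigma_i^2$, the Bennett-type MGF bound $\log\E e^{t\eta_i}\le(e^{\abs{t}}-\abs{t}-1)\sigma_i^2$ applies. Noting $\abs{b_i}\le 2\max(\norm{A}_1,\norm{A}_{\infty})$ and using the hypothesis $\lambda\le 1/(104\max(\norm{A}_1,\norm{A}_{\infty}))$, the relevant exponents $\lambda\abs{b_i}$ are uniformly $O(1)$; one factor of $\lambda\abs{b_i}$ can then be absorbed into a small numerical constant, converting each nominally $\lambda^2 b_i^2\sigma_i^2$ contribution into one that is $\lambda\abs{b_i}\sigma_i^2$ with a tiny coefficient. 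Summing over $i$ and expanding $\abs{b_i}$ into its diagonal and off-diagonal pieces then reproduces the shape $C_5\lambda\bigl(\half\sum\abs{a_{ii}}p_i+\sum_{i\ne j}\abs{a_{ij}}p_ip_j\bigr)$ of the third factor, with tight bookkeeping yielding $C_5\le 0.04$.

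For the quadratic part $\tilde Q$, I would adapt the Rudelson--Vershynin Hanson--Wright strategy to the bounded Bernoulli setting. First, the decoupling inequality of de la Pe\~{n}a--Montgomery-Smith gives $\E e^{\lambda\tilde Q}\le \E e^{c\lambda\sum_{i,j} a_{ij}\eta_i\eta_j'}$ for an independent copy $\eta'$ of $\eta$. Conditioning on $\eta$ leaves a linear form $\sum_j u_j \eta_j'$ in $\eta'$ with $u_j = \sum_i a_{ij}\eta_i$; applying the one-dimensional Bennett bound controls the conditional MGF by $\exp(c\lambda^2\sum_j u_j^2\sigma_j^2)$. Taking expectation over $\eta$ (with one more concentration step to control the fluctuations of $\sum_j u_j^2\sigma_j^2$ around its mean) produces the variance $\sum_{i\ne j} a_{ij}^2\sigma_i^2\sigma_j^2$. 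Finally, using the hypothesis on $\lambda$ to absorb one factor of $\lambda\abs{a_{ij}}$ into a small constant yields the clean bound $\exp\bigl(\tfrac13\lambda\sum_{i\ne j}\abs{a_{ij}}\sigma_i^2\sigma_j^2\bigr)$, matching the second factor.

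The main obstacle is combining the linear and quadratic parts without losing sharp constants: since $\tilde L$ and $\tilde Q$ share the same randomness, a naive Cauchy--Schwarz splits them cleanly but introduces multiplicative factors of $2$ that could spoil the stated $\tfrac13$ and $C_5\le 0.04$. A cleaner route is to handle $\tilde L+\tilde Q$ jointly, either by careful conditioning one coordinate at a time or by a direct cumulant-style expansion that exploits the small-$\lambda$ regime to give linear-in-$\lambda$ remainders. The quadratic-form step additionally needs a secondary concentration argument when integrating over $\eta$; keeping the variance in the sharp form $\sum a_{ij}^2\sigma_i^2\sigma_j^2$ (rather than a coarser $\norm{A}_F^2$) throughout this iteration is where most of the technical work lies.
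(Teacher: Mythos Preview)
Your decomposition $\mu+\tilde L+\tilde Q$ with $\eta_i=\xi_i-p_i$ is exactly what the paper does (there the pieces are called $S_2$ and $S_1$), and the tools you name---Bennett-type MGF bound for the linear part, decoupling plus conditioning for the quadratic part---are precisely the paper's tools. So the overall architecture is right.

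Your one substantive concern, however, is misplaced. You worry that Cauchy--Schwarz on $\E e^{\lambda(\tilde L+\tilde Q)}\le \E^{1/2}e^{2\lambda\tilde L}\,\E^{1/2}e^{2\lambda\tilde Q}$ would spoil the constants $\tfrac13$ and $C_5\le 0.04$, and you propose instead to handle $\tilde L+\tilde Q$ jointly by coordinate-wise conditioning or cumulant expansion. In fact the paper uses exactly this Cauchy--Schwarz split, and the stated constants are \emph{already} computed with the doubling built in: one shows $\E e^{2\lambda\tilde Q}\le\exp\bigl(\tfrac23\lambda\sum_{i\ne j}|a_{ij}|\sigma_i^2\sigma_j^2\bigr)$ and $\E e^{2\lambda\tilde L}\le\exp\bigl(C_5\lambda\sum_i|\breve a_i|p_i\bigr)$ with $C_5=\tfrac{1}{26}e^{1/26}<0.04$, and the square roots give the announced factors. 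The range $\lambda\le 1/(104\max(\norm{A}_1,\norm{A}_\infty))$ is what makes $2\lambda$ still small enough for both pieces. So there is no need for the more elaborate joint treatment you sketch.

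One minor technical difference in the quadratic step: rather than first landing on $\exp(c\lambda^2\sum_j u_j^2\sigma_j^2)$ and then taking a second concentration step to reach $\sum a_{ij}^2\sigma_i^2\sigma_j^2$ before absorbing a $\lambda$, the paper converts to a linear-in-$\lambda$ bound immediately after the first conditioning (using $\tfrac12 t_i^2\le \tfrac{4}{13}\lambda|\tilde a_i|$), obtaining $\exp(C_4\lambda\sum_i|\tilde a_i|\sigma_i^2)$; the second integration over $|Z_j'|$ (using $\E|Z_j'|=2\sigma_j^2$) then directly yields $\exp\bigl(\tfrac23\lambda\sum_{i\ne j}|a_{ij}|\sigma_i^2\sigma_j^2\bigr)$ without ever passing through $a_{ij}^2$. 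This avoids the ``secondary concentration argument'' you flag as the main technical work.
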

The proof of Theorem~\ref{thm::Bernmgf}  is deferred to Section~\ref{sec::noncenter}.

Before we leave this section, we also introduce the following notation.
For a random variable $X$, the sub-exponential  (or $\psi_1$) norm of $X$ denoted by 
$\norm{X}_{\psi_1}$ is defined to be  the smallest $K_2$
 which satisfies 
\bens
&& 
\nonumber
\left(\mathbb{E}\abs{X}^p\right)^{1/p }\le K_2 p \;  \forall p
\ge 1; \; \; \text{in other words} \; \; \\
\label{eq::expK}
&& \norm{X}_{\psi_1} = \sup_{p \ge 1} p^{-1}
(\mathbb{E}\abs{X}^p)^{1/p}.
\eens
Throughout this paper $C_0, C, C_1, c, c_1, \ldots$ denote positive absolute constants whose value may change from line to line.
For a vector $X \in \R^m$, let 
$X_{\Lambda_{\delta}}$ denote $(X_i)_{i \in \Lambda_{\delta}}$ for
a set $\Lambda_{\delta} \subseteq [m]$.

We use the following properties of the Hadamard product~\cite{HJ91},
\bens
A \circ x x^T & = &D_x A D_x \\
\text { and } 
\tr(D_{\xi} A D_{\xi} A^T) & = &
 \xi^T(A \circ A) \xi 
 \eens
from which a simple consequence is
$\tr(D_{\xi} A D_{\xi}) = \xi^T(A \circ I) \xi  =  \xi^T \diag(A)) \xi$.

We use the following bounds throughout our paper.
For any $x \in \R$, 
\ben 
\label{eq::elem}
e^x \le 1 + x + \half x^2 e^{\abs{x}}.
\een
We need the following result which follows from Proposition 3.4 in~\cite{Tal95}.
\begin{lemma}
\label{lemma::bern-sum}
Let $A = (a_{ij})$ be an $m \times m$ matrix.
Let $a_{\infty} := \max_i \abs{a_{ii}}$.
Let $\xi = (\xi_1, \ldots, \xi_m) \in \{0, 1\}^m$ be a random vector
with independent Bernoulli random variables $\xi_i$ 
such that
$\xi_i = 1$ with probability $p_i$ and $0$ otherwise.
Then for $\abs{\lambda} \le \inv{4 a_{\infty}}$,
\bens
\E\exp\left(\lambda \sum_{i=1}^m a_{ii} (\xi_i -p_i)\right) 
 & \le & 
\exp\left(\inv{2} \lambda^2 e^{\abs{\lambda} a_{\infty}} \sum_{i=1}^m
  a_{ii}^2 \sigma^2_i \right)
\eens
where
$\sigma^2_i = p_i(1-p_i)$.
\end{lemma}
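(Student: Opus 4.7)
The plan is to exploit the independence of the coordinates of $\xi$ to factor the moment generating function as a product, and then control each one-dimensional MGF by the elementary inequality~\eqref{eq::elem}. This is essentially the standard Bennett-type argument specialized to centered bounded random variables; the Talagrand reference provides a more refined quantitative form, but the statement above follows from a short direct calculation.

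First I would write, by independence of the $\xi_i$,
\[
\E \exp\left(\lambda \sum_{i=1}^m a_{ii}(\xi_i - p_i)\right) \;=\; \prod_{i=1}^m \E \exp\bigl(\lambda\, a_{ii}(\xi_i - p_i)\bigr).
\]
For a fixed index $i$, set $Z_i := \lambda\, a_{ii}(\xi_i - p_i)$. Then $\E Z_i = 0$, $\E Z_i^2 = \lambda^2 a_{ii}^2 \sigma_i^2$, and since $\xi_i - p_i$ takes values $1-p_i$ and $-p_i$, we have $|\xi_i - p_i| \le \max(p_i, 1-p_i) \le 1$ and hence $|Z_i| \le |\lambda|\, a_\infty$ almost surely. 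Applying \eqref{eq::elem} to $Z_i$ gives $e^{Z_i} \le 1 + Z_i + \tfrac{1}{2} Z_i^2 e^{|Z_i|}$ pointwise, and taking expectations, the linear term vanishes while $e^{|Z_i|}$ can be replaced by the deterministic upper bound $e^{|\lambda|\, a_\infty}$, yielding
\[
\E\, e^{Z_i} \;\le\; 1 + \tfrac{1}{2}\,\lambda^2 a_{ii}^2 \sigma_i^2\, e^{|\lambda|\, a_\infty}.
\]

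Finally I would use $1 + u \le e^u$ to exponentiate each factor, and multiply across $i=1,\ldots,m$ to obtain exactly the claimed bound $\exp\bigl(\tfrac{1}{2} \lambda^2 e^{|\lambda| a_\infty} \sum_{i=1}^m a_{ii}^2 \sigma_i^2\bigr)$. The hypothesis $|\lambda| \le 1/(4 a_\infty)$ plays only a mild role in this approach: it ensures $e^{|\lambda| a_\infty} \le e^{1/4} < 4/3$, keeping the prefactor a small absolute constant so that the bound behaves like a pure Bernstein variance term.

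There is no serious obstacle in this argument; the only subtlety is choosing an inequality that handles both signs of $\lambda$ uniformly, which is exactly what \eqref{eq::elem} provides via the $e^{|Z_i|}$ factor. The boundedness $|\xi_i - p_i| \le 1$ and the centering $\E(\xi_i - p_i) = 0$ then do all the remaining work.
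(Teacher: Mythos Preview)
Your argument is correct and complete: factoring by independence, applying \eqref{eq::elem} pointwise to each $Z_i = \lambda a_{ii}(\xi_i - p_i)$, using the almost-sure bound $|Z_i| \le |\lambda|\,a_\infty$ to replace $e^{|Z_i|}$ by $e^{|\lambda|a_\infty}$, and then exponentiating via $1+u\le e^u$ gives exactly the stated inequality. You are also right that the constraint $|\lambda|\le 1/(4a_\infty)$ is not needed for the inequality itself to hold; it only keeps the prefactor $e^{|\lambda|a_\infty}$ an absolute constant, which is what the subsequent applications in the paper require.

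The paper does not supply its own proof of this lemma; it simply records it as a consequence of Proposition~3.4 in~\cite{Tal95}. Your direct computation is the standard Bennett-type derivation, and in fact the paper uses the identical mechanism---\eqref{eq::elem} applied to centered bounded variables, followed by $1+u\le e^u$---in the proof of Theorem~\ref{thm::Bernmgf} (see the bound on $\E\exp(t_i Z_i)$ there). So your approach is not only valid but entirely in line with the paper's own toolkit; nothing is gained or lost relative to invoking the Talagrand reference, except that your version is self-contained.
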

We need to state Lemma~\ref{lemma::expmgf2}, which provides an
estimate of the moment generating function  for the centered
sub-exponential random variable $Z_k := X_k^2 - \E X_k^2$ for $X_k$ as
defined in Theorem~\ref{thm::HWsparse}.
\begin{lemma}
\label{lemma::expmgf2}
Let $X \in \R$ be a sub-gaussian random variable which satisfies 
$\expct{X} = 0$ and $\norm{X}_{\psi_2} \leq K$.  
Let $\abs{\tau} \le \inv{23.5 e K^2}$. Denote by $C_0 :=38.94$. Then
\bens
\E\left(\exp(\tau (X^2 - \E X^2))\right) \le 1+ 38.94 
\tau^2 K^4 \le \exp(C_0 \tau^2 K^4).
\eens
\end{lemma}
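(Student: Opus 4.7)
The plan is a direct power-series argument. Set $Z = X^2 - \E X^2$, so $\E Z = 0$. Expanding the exponential and interchanging summation and expectation (justified a posteriori by absolute convergence), one obtains
\begin{equation*}
\E \exp(\tau Z) = 1 + \tau \E Z + \sum_{k=2}^{\infty} \frac{\tau^k \E Z^k}{k!} = 1 + \sum_{k=2}^{\infty} \frac{\tau^k \E Z^k}{k!},
\end{equation*}
so it suffices to bound the tail by $38.94\,\tau^2 K^4$; the second inequality of the lemma then follows from $1+x \le e^x$.

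The key input is a bound on $\E|Z|^k$ in terms of $K$. Because $X^2 \ge 0$, splitting on whether $X^2 \ge \E X^2$ gives $|Z| \le \max(X^2, \E X^2)$, hence $\E|Z|^k \le \E X^{2k} + (\E X^2)^k$. Using the subgaussian moment bound $(\E|X|^p)^{1/p} \le K\sqrt{p}$ from $\|X\|_{\psi_2} \le K$ yields $\E X^{2k} \le K^{2k}(2k)^k$ and $\E X^2 \le 2K^2$. For $k \ge 2$ the first term dominates, so one obtains the clean estimate $\E|Z|^k \le 2 K^{2k}(2k)^k$.

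Combining this with Stirling's bound $k! \ge (k/e)^k$ gives
\begin{equation*}
\frac{|\tau|^k \E|Z|^k}{k!} \le 2\bigl(2e|\tau| K^2\bigr)^k,
\end{equation*}
and under the hypothesis $|\tau| \le 1/(23.5\,e K^2)$ the common ratio $r := 2e|\tau|K^2$ lies in $[0, 2/23.5]$, so a geometric summation yields $\sum_{k \ge 2} 2 r^k = 2r^2/(1-r)$, which is bounded by a fixed constant times $\tau^2 K^4$. A marginally sharper handling of the leading $k=2$ term, using $\E Z^2 = \Var(X^2) \le \E X^4 \le 16 K^4$ directly and summing only $k \ge 3$ by the geometric bound, recovers the stated constant $38.94$.

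The proof is structurally routine; the only real obstacle is constant tracking. The threshold $1/(23.5\,e K^2)$ is chosen precisely so that the geometric ratio $r$ stays bounded well away from $1$, and the bookkeeping then produces the advertised $C_0 = 38.94$. Any sufficiently small absolute constant in place of $1/23.5$ would work, at the price of a different value of $C_0$.
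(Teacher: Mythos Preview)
Your proposal is correct and follows essentially the same route as the paper: Taylor-expand $\E\exp(\tau Z)$ for $Z=X^2-\E X^2$, bound the moments $\E|Z|^k$ via the subgaussian assumption, invoke Stirling, and sum the tail geometrically. The only cosmetic difference is that the paper routes the moment bound through the $\psi_1$ norm (using $\|X^2-\E X^2\|_{\psi_1}\le 4K^2$ and then $\E|Y|^p\le p^p$ for the normalized variable), whereas you bound $\E|Z|^k$ directly via $|Z|\le\max(X^2,\E X^2)$; your direct estimate is in fact slightly sharper, which is why your refined constant comes in below $38.94$.
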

The proof follows essentially that of Lemma 5.15
in~\cite{Vers12}; we provide  here explicit constants.

The rest of the paper is organized as follows.
In Section~\ref{sec::related}, we compare our results with those in the
literature. We then prove Theorem~\ref{thm::HWsparse} in Section~\ref{sec::proofofHW} and 
Theorem~\ref{thm::MNsparse} in Section~\ref{sec::proofofMN}. We prove 
Theorem~\ref{thm::Bernmgf}  in Section~\ref{sec::noncenter}. We leave
certain calculations in Appendix A for the purpose of 
self-containment, namely, the proof of Lemmas~\ref{lemma::expmgf2} and~\ref{lemma::taylor}.

\section{Consequences and related work}
\label{sec::related}
In this section, we first compare with the following form of the Hanson-Wright
inequality as recently derived in~\cite{RV13}, as well as an even more closely related result in~\cite{Rud15}.
Such concentration of measure bounds were originally proved
by~\cite{HW71,HW73}. 
The bound as stated in Theorem~\ref{thm::HW} is proved in~\cite{RV13}.
\begin{theorem}{\cite{RV13}}
\label{thm::HW}
Let $X = (X_1, \ldots, X_m) \in \R^m$ be a random vector with independent components $X_i$ which satisfy
$\expct{X_i} = 0$ and $\norm{X_i}_{\psi_2} \leq K$. Let $A$ be an $m \times m$ matrix. Then, for every $t > 0$,
\bens
\prob{\abs{X^T A X - \expct{X^T A X} } > t} 
\leq 
2 \exp \left(- c\min\left(\frac{t^2}{K^4 \fnorm{A}^2}, \frac{t}{K^2 \twonorm{A}} \right)\right).
\eens
\end{theorem}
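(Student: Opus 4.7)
The plan is to bound the moment generating function (MGF) of $S := X^T A X - \E X^T A X$ and apply Markov's inequality with an optimized parameter. Writing $D = \diag(A)$, decompose $S = S_d + S_o$ where $S_d = \sum_i a_{ii}(X_i^2 - \E X_i^2)$ and $S_o = \sum_{i\neq j} a_{ij} X_i X_j$; by Cauchy-Schwarz, $\E\exp(\lambda S) \le \sqrt{\E\exp(2\lambda S_d)}\,\sqrt{\E\exp(2\lambda S_o)}$, reducing the task to two independent sub-problems.

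For the diagonal part $S_d$, independence of the components $X_i$ combined with the sub-exponential MGF estimate of Lemma~\ref{lemma::expmgf2} gives $\E\exp(2\lambda S_d) \le \exp\bigl(C_1 \lambda^2 K^4 \sum_i a_{ii}^2\bigr)$ in the range $\abs{\lambda} \le c/(K^2 \max_i\abs{a_{ii}})$. Since $\sum_i a_{ii}^2 \le \fnorm{A}^2$ and $\max_i\abs{a_{ii}} \le \twonorm{A}$, this already yields Bernstein-type behavior for the diagonal contribution in the parameter range required by the final bound.

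For the off-diagonal part $S_o$, the central step is a decoupling argument. Introducing an independent copy $X'$ of $X$, the decoupling inequality of de la Pe\~na-Montgomery-Smith, applied with the convex function $x \mapsto e^{\lambda x}$, yields $\E\exp(2\lambda S_o) \le \E\exp(c_2 \lambda X^T A X')$, replacing the diagonal-free quadratic form by a bilinear form in two independent copies. Conditioning on $X'$, the quantity $\sum_i X_i (AX')_i$ is a weighted sum of independent centered sub-gaussians, so the standard sub-gaussian MGF bound yields $\E_X\exp(c_2 \lambda X^T A X' \mid X') \le \exp\bigl(C_3 \lambda^2 K^2 \twonorm{AX'}^2\bigr)$.

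The main obstacle lies in controlling $\E_{X'}\exp\bigl(C_3 \lambda^2 K^2\, X'^T (A^T A) X'\bigr)$, which is itself the MGF of a positive semidefinite quadratic form in a sub-gaussian vector. The cleanest resolution, following Rudelson-Vershynin, is a comparison lemma that bounds this MGF by its Gaussian counterpart $\E\exp(C_3' \lambda^2 K^4\, g^T (A^T A) g)$ with $g \sim N(0, I_m)$, which is valid because the MGF parameter $\mu = C_3 \lambda^2 K^2$ is small whenever $\abs{\lambda} \le c/(K^2 \twonorm{A})$. The Gaussian MGF equals $\det(I - 2\mu K^2 A^T A)^{-1/2}$, and applying $-\log(1-x) \le x + x^2$ for $0 \le x \le 1/2$ together with $\tr(A^T A) = \fnorm{A}^2$ bounds it by $\exp(C_4 \mu K^2 \fnorm{A}^2)$. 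Substituting back gives $\E\exp(2\lambda S_o) \le \exp(C_5 \lambda^2 K^4 \fnorm{A}^2)$ in the admissible range. Combining with the diagonal estimate, $\E\exp(\lambda S) \le \exp(C \lambda^2 K^4 \fnorm{A}^2)$ for $\abs{\lambda} \le c/(K^2 \twonorm{A})$; optimizing Markov's bound $\prob{S > t} \le e^{-\lambda t}\,\E\exp(\lambda S)$ over this range produces the two-regime exponent $c\min\bigl(t^2/(K^4\fnorm{A}^2),\, t/(K^2\twonorm{A})\bigr)$, and the symmetric argument applied to $-S$ handles the lower tail.
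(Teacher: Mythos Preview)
This theorem is not proved in the paper; it is quoted from \cite{RV13} as a point of comparison (the paper says explicitly that ``The bound as stated in Theorem~\ref{thm::HW} is proved in~\cite{RV13}''). Your sketch is essentially the Rudelson--Vershynin argument and is correct.

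A couple of minor deviations from the version the paper reproduces in Appendix~\ref{sec::HWapp} (for the sparse analogue) are worth noting. First, you decouple via an independent copy $X'$ (de la Pe\~na--Montgomery-Smith), whereas the paper and \cite{RV13} use the Bernoulli-partition decoupling $S_\delta=\sum_{i,j}\delta_i(1-\delta_j)a_{ij}X_iX_j$ with $\E\delta_i=1/2$. Second, your ``comparison lemma'' for $\E_{X'}\exp(\nu\,\twonorm{AX'}^2)$ is exactly the step that \cite{RV13} carries out explicitly by introducing $g\sim N(0,I)$ through the identity $\exp(s^2/2)=\E_g\exp(sg)$ and swapping the order of conditioning: $\E_{X'}\exp(\nu\twonorm{AX'}^2)=\E_{X'}\E_g\exp(\sqrt{2\nu}\,g^TAX')\le\E_g\exp(C\nu K^2\twonorm{A^Tg}^2)$ after Fubini and the sub-gaussian linear MGF bound; this lands on $g^T AA^Tg$ rather than $g^T A^TA g$, but the two have the same trace and operator norm so the final exponent is unchanged. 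Finally, the paper handles the diagonal and off-diagonal parts as separate tail events combined by a union bound rather than merging their MGFs via Cauchy--Schwarz; both routes yield the same two-regime exponent up to constants.
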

When $X$ is a vector whose coordinates are $\pm 1$
Bernoulli random variables, the following Lemma in the same spirit as in
Theorem~\ref{thm::HWsparse} is shown in~\cite{Rud15}.
\begin{lemma}{(\cite{Rud15})}
\label{lemma::randomset}
Let $J$ be a random subset of $[m]$ of size $k < m$ uniformly chosen 
among all such subsets. Denote by $R_J = \sum_{j \in J} e_j e_j^T$ the coordinate projection
on the set $J$.
Let $Y = (\ve_1, \ldots, \ve_m)$ be  vector whose coordinates are $\pm
1$ Bernoulli Random variables. Then for any $m \times m$ matrix $A$
and any $t > 0$
\bens
\lefteqn{
\prob{\abs{Y^T R_J A R_J Y  - \expct{Y^T R_J A R_J Y}
}> t} } \\
& \le & 
2 \exp \left(- c\min\left(\frac{t^2}{k \twonorm{A}^2}, 
    \frac{t}{\twonorm{A}} \right)\right).
\eens
\end{lemma}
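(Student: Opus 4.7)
The plan is to condition on the random subset $J$ to decouple the two sources of randomness, then apply the classical Hanson-Wright inequality (Theorem~\ref{thm::HW}) to the fixed-$J$ bilinear form in $Y$ and control the residual subset-selection deviation by a Hoeffding-type inequality for sampling without replacement. Writing $\xi_j = \ind\{j \in J\}$ so that $R_J = D_\xi$, I would split
\[
Y^T R_J A R_J Y - \E[Y^T R_J A R_J Y] \;=\; T_1 + T_2,
\]
with $T_1 := Y^T R_J A R_J Y - \E_Y[Y^T R_J A R_J Y \given J]$ and $T_2 := \tr(R_J A R_J) - \E_J[\tr(R_J A R_J)] = \sum_{j=1}^m a_{jj}(\xi_j - k/m)$. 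Here I use that $Y$ has $\pm 1$ Bernoulli coordinates, hence $\E[YY^T] = I_m$ and $\E_Y[Y^T B Y \given J] = \tr(B)$ for any fixed matrix $B$. A union bound on $\{\abs{T_1} > t/2\} \cup \{\abs{T_2} > t/2\}$ will then deliver the claim up to an adjustment of the absolute constant.

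For $T_1$, conditional on $J$ I would apply Theorem~\ref{thm::HW} to the matrix $R_J A R_J$ with $K = 1$. The key gain comes from the rank bound $\mathrm{rank}(R_J A R_J) \le k$, which yields
\[
\fnorm{R_J A R_J}^2 \;\le\; k \cdot \twonorm{R_J A R_J}^2 \;\le\; k \cdot \twonorm{A}^2
\]
since $\twonorm{R_J} \le 1$. Hanson-Wright then gives, uniformly in $J$,
\[
\prob{\abs{T_1} > t/2 \given J} \;\le\; 2\exp\!\left(-c\min\!\left(\frac{t^2}{k\,\twonorm{A}^2},\;\frac{t}{\twonorm{A}}\right)\right),
\]
and this bound survives integration in $J$.

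For $T_2$, the quantity is a mean-zero sum of $k$ values sampled uniformly without replacement from $\{a_{11},\ldots,a_{mm}\}$, and each value satisfies $\abs{a_{jj}} = \abs{e_j^T A e_j} \le \twonorm{A}$. Serfling's Hoeffding-type inequality for sampling without replacement therefore gives $\prob{\abs{T_2} > t/2} \le 2\exp(-c\,t^2/(k\,\twonorm{A}^2))$, which is dominated by the $T_1$ bound and so is absorbed into the final constant. The main conceptual step, and the only real obstacle, is recognizing that the rank-$k$ structure of $R_J A R_J$ replaces the generic $\fnorm{A}^2$ appearing in Theorem~\ref{thm::HW} by $k\,\twonorm{A}^2$; without this observation one would retain a Frobenius norm with no $k$ dependence at all, and the advertised tail could not possibly hold in the subgaussian regime. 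Granted this observation, the decoupling into $T_1$ and $T_2$ reduces the lemma to two classical estimates, and no sparse Hanson-Wright machinery is required.
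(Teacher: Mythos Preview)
The paper does not prove this lemma; it is quoted verbatim from \cite{Rud15} in Section~\ref{sec::related} purely for comparison with the new sparse Hanson--Wright bounds, and no argument is supplied anywhere in the text. So there is no ``paper's own proof'' to benchmark your proposal against.

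That said, your argument is sound and is essentially the natural route. Conditioning on $J$ and invoking Theorem~\ref{thm::HW} with the rank bound $\fnorm{R_J A R_J}^2 \le k\,\twonorm{A}^2$ is exactly the right observation, and it delivers the conditional tail uniformly in $J$. The only piece that requires care is $T_2$: because $J$ is a uniform $k$-subset rather than an i.i.d.\ Bernoulli mask, the indicators $\xi_j = \ind\{j\in J\}$ are dependent, so you are correct to appeal to a sampling-without-replacement Hoeffding bound (Serfling) rather than a naive Bernoulli Hoeffding; with $\abs{a_{jj}}\le\twonorm{A}$ this gives the advertised $\exp(-c\,t^2/(k\,\twonorm{A}^2))$ tail. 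Both pieces combine as you describe, and no machinery beyond Theorem~\ref{thm::HW} is needed.
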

Other related results include~\cite{Lat06,HKZ12,DKN10,BM12,FR13,AW15}.
We refer to~\cite{RV13} for a survey of these and other related
results.

Clearly, the large deviation bounds in
Theorems~\ref{thm::HWsparse} and \ref{thm::MNsparse}
are determined by the following quantity 
\bens
\bar{M} :=  \sum_{i=1}^m p_i a^2_{ii} + \sum_{i\not=j} a^2_{ij} p_i
p_j
\eens
We now state some consequences of Theorems~\ref{thm::HWsparse}
and~\ref{thm::MNsparse} in Corollaries~\ref{coro::HWsparse} and
\ref{coro::MN-sparse}.  
\silent{For simplicity, we first take $p_1 = \ldots =
p_m = p$. Then 
\ben
\label{eq::Mbound}
\bar{M} &:= & \sum_{i=1}^m p_i a^2_{ii} + \sum_{i\not=j} a^2_{ij}
p_i p_j =p \fnorm{\diag(A_0)}^2 + p^2 \fnorm{\offd(A_0)}^2.
\een}
Lemma~\ref{lemma::randomset} and  Corollaries~\ref{coro::HWsparse} and
~\ref{coro::MN-sparse} show essentially a large deviation bound at roughly the same
order given that 
$$p \fnorm{\diag(A)}^2 + p^2 \fnorm{\offd(A)}^2 \le p m
\twonorm{A}^2$$ while 
$k \twonorm{A}^2 = \frac{k}{m} m \twonorm{A}^2$.

The following Corollary~\ref{coro::HWsparse} follows from
Theorem~\ref{thm::HWsparse} immediately.
\begin{corollary}
\label{coro::HWsparse}
Let $X, \xi$ be as defined in Theorem~\ref{thm::HWsparse}.
Let $p_1=p_2 = \ldots = p_m =p$. Let $A = (a_{ij})$ be an $m \times m$ matrix. Then, for every $t > 0$,
\bens
\lefteqn{
\prob{\abs{X^T D_{\xi} A D_{\xi} X  - \expct{X^T D_{\xi} A D_{\xi} X}} > t} \leq } \\
& & 
2\exp \left(- c \min\left(\frac{t^2}{K^4 \left(p\fnorm{\diag(A)}^2
+ p^2 \fnorm{\offd(A)}^2\right)},\frac{t}{K^2 \twonorm{A}} \right)\right).
\eens
\end{corollary}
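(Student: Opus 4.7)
The plan is to deduce the corollary directly from Theorem~\ref{thm::HWsparse} by unpacking the quadratic form $X^T D_\xi A D_\xi X$ and specializing the Bernoulli probabilities to the common value $p$. The first step is to observe that $D_\xi X \in \R^m$ is the vector whose $i$-th entry is $\xi_i X_i$, so $D_\xi X = X \circ \xi$. Hence
\[
X^T D_\xi A D_\xi X = (D_\xi X)^T A (D_\xi X) = (X \circ \xi)^T A (X \circ \xi),
\]
which reduces the left-hand side to precisely the quadratic form treated by Theorem~\ref{thm::HWsparse}.

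Next, I would simplify the denominator appearing in the subgaussian (quadratic) term of the exponent under the assumption $p_1 = \cdots = p_m = p$. Factoring out the common $p$ and $p^2$ gives
\[
\sum_{k=1}^m p_k\, a_{kk}^2 \;+\; \sum_{i \neq j} a_{ij}^2 p_i p_j \;=\; p \sum_{k=1}^m a_{kk}^2 \;+\; p^2 \sum_{i \neq j} a_{ij}^2 \;=\; p\,\fnorm{\diag(A)}^2 \;+\; p^2 \fnorm{\offd(A)}^2,
\]
using that $\fnorm{\diag(A)}^2 = \sum_k a_{kk}^2$ and $\fnorm{\offd(A)}^2 = \sum_{i \neq j} a_{ij}^2$. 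The operator-norm term $t/(K^2 \twonorm{A})$ is left unchanged.

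Substituting these two simplifications into the bound of Theorem~\ref{thm::HWsparse} yields the stated inequality, completing the derivation. There is no real obstacle here: the only content of the corollary beyond Theorem~\ref{thm::HWsparse} is the rewriting in terms of $\diag(A)$ and $\offd(A)$ and the identification of the sampled quadratic form with the Hadamard-product version. The motivation for stating it separately is to expose the structural dependence on the diagonal and off-diagonal Frobenius norms, which is what makes the comparison with Lemma~\ref{lemma::randomset} transparent via the bound $p\,\fnorm{\diag(A)}^2 + p^2 \fnorm{\offd(A)}^2 \le p\,m\,\twonorm{A}^2$ alluded to just before the corollary.
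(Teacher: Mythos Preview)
Your proposal is correct and matches the paper's approach: the paper states only that the corollary ``follows from Theorem~\ref{thm::HWsparse} immediately,'' and your two observations --- that $X^T D_\xi A D_\xi X = (X\circ\xi)^T A (X\circ\xi)$ and that the equal-$p$ specialization collapses the denominator to $p\fnorm{\diag(A)}^2 + p^2\fnorm{\offd(A)}^2$ --- are exactly what that immediacy amounts to.
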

\begin{corollary}
\label{coro::MN-sparse}
Let $D_0, A_0, X, \xi, Y$ be as defined in
Theorem~\ref{thm::MNsparse}.
Let $p_1=p_2 = \ldots = p_m =p$. Then, for every $t > 0$,
\bens
\lefteqn{
\prob{\abs{Y^T D_{\xi} Y - \E Y^T D_{\xi} Y} > t} = 
\prob{\abs{\twonorm{D_{\xi} D_0 X }^2 - \expct{\twonorm{D_{\xi} D_0 X}}^2} > t} } \\
& \le & 
2\exp \left(-c\min\left(\frac{t^2}{K^4 (p \fnorm{\diag(A_0)}^2 +p^2 \fnorm{\offd(A_0)}^2)}, 
\frac{t}{K^2 \twonorm{A_0}} \right)\right).
\eens
\end{corollary}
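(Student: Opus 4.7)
The plan is to obtain Corollary~\ref{coro::MN-sparse} as a direct specialization of Theorem~\ref{thm::MNsparse}; no new probabilistic machinery is needed. The work is purely algebraic, matching the variance-like quantity $\bar M = \sum_i p_i a_{ii}^2 + \sum_{i\neq j} a_{ij}^2 p_i p_j$ that controls the sub-gaussian regime of the tail bound in Theorem~\ref{thm::MNsparse} against the cleaner expression $p\,\fnorm{\diag(A_0)}^2 + p^2\,\fnorm{\offd(A_0)}^2$ that appears in the statement of the corollary.

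First I would reconcile the two probabilities appearing in the corollary's statement. Since $Y = D_0 X$ and $D_\xi$ is a diagonal $0/1$ matrix, $D_\xi Y = D_\xi D_0 X$ and $Y^T D_\xi Y = (D_\xi Y)^T(D_\xi Y) = \twonorm{D_\xi D_0 X}^2$ (using $D_\xi = D_\xi^2$). Hence the two events are literally identical, so it suffices to prove the stated deviation bound for $Y^T D_\xi Y$, and this is exactly the quantity controlled in Theorem~\ref{thm::MNsparse}.

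Second, I would substitute the common value $p_i = p$ into $\bar M$. The diagonal sum becomes $\sum_i p_i a_{ii}^2 = p \sum_i a_{ii}^2 = p\,\fnorm{\diag(A_0)}^2$, and the off-diagonal sum becomes $\sum_{i \neq j} a_{ij}^2 p_i p_j = p^2 \sum_{i \neq j} a_{ij}^2 = p^2\,\fnorm{\offd(A_0)}^2$. This is the only bookkeeping step, and it is immediate since the Frobenius norm decomposes orthogonally as $\fnorm{A_0}^2 = \fnorm{\diag(A_0)}^2 + \fnorm{\offd(A_0)}^2$, so the two sums separate cleanly. The operator-norm term $t/(K^2\twonorm{A_0})$ in the sub-exponential regime is unchanged by the specialization.

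There is essentially no obstacle; the only points worth checking are that the absolute constant $c$ in Corollary~\ref{coro::MN-sparse} is inherited directly from the constant $c_2$ of Theorem~\ref{thm::MNsparse}, and that the hypothesis $0 \le p \le 1$ is consistent with $0 \le p_i \le 1$ in the parent theorem. Once these are verified, the corollary follows by direct substitution into the bound of Theorem~\ref{thm::MNsparse}.
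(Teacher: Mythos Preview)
Your proposal is correct and matches the paper's approach: the paper states Corollary~\ref{coro::MN-sparse} as an immediate consequence of Theorem~\ref{thm::MNsparse} without giving a separate proof, and your substitution $p_i=p$ into $\bar M$ together with the identification $Y^T D_\xi Y = \twonorm{D_\xi D_0 X}^2$ is exactly the intended (and only) step.
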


\begin{corollary}
\label{coro::HW2}
Suppose all conditions in Corollary~\ref{coro::HWsparse} hold.
Let $A \in \R^{m \times m}$ be positive semidefinite.
Suppose $EX_i^2 =1$ and
\ben
\label{eq::ratecond}
\log m \twonorm{A} = o(p \tr(A))
\een
Then with probability at least $1-4/m^4$,
\bens
 \abs{X^T D_{\xi} A D_\xi  X} 
& \le  & p\tr(A) (1+o(1)).
\eens
\end{corollary}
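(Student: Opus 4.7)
The plan is to apply Corollary~\ref{coro::HWsparse} with a suitably chosen deviation $t$ and to verify that $t$ is small compared with the expectation $\E[X^T D_\xi A D_\xi X]$ while being large enough to make both terms in the exponent exceed $4\log m$.

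First I would compute the mean exactly. Since $X$ and $\xi$ are independent, $\E X_i = 0$, $\E X_i^2 = 1$, and the $X_i$ are independent, only the diagonal terms survive in $\sum_{i,j} a_{ij}\E[X_iX_j\xi_i\xi_j]$, giving
\begin{equation*}
\E[X^T D_\xi A D_\xi X] \;=\; \sum_i a_{ii}\,\E X_i^2\,\E\xi_i^2 \;=\; p\,\tr(A).
\end{equation*}
Next I would set $t=\eta_m\, p\tr(A)$ for some $\eta_m=o(1)$ to be fixed at the end, and bound the two rates that appear in the exponent of Corollary~\ref{coro::HWsparse}. The key observation is that positive semidefiniteness of $A$ forces $0\le a_{ii}\le \twonorm{A}$, hence $\fnorm{\diag(A)}^2=\sum_i a_{ii}^2\le \twonorm{A}\tr(A)$, and $\fnorm{\offd(A)}^2\le\fnorm{A}^2=\tr(A^2)\le\twonorm{A}\tr(A)$. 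Consequently
\begin{equation*}
p\,\fnorm{\diag(A)}^2+p^2\,\fnorm{\offd(A)}^2 \;\le\; 2p\,\twonorm{A}\tr(A).
\end{equation*}

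With this bound in hand, the linear-rate condition $t/(K^2\twonorm{A})\ge C\log m$ translates to $\eta_m\ge CK^2\log m\,\twonorm{A}/(p\tr(A))$, and the quadratic-rate condition $t^2/[K^4\cdot 2p\twonorm{A}\tr(A)]\ge C\log m$ translates to $\eta_m^2\ge 2C K^4\log m\,\twonorm{A}/(p\tr(A))$. Under the hypothesis \eqref{eq::ratecond} the ratio $\log m\,\twonorm{A}/(p\tr(A))$ tends to $0$, so both can be arranged simultaneously by taking, for example,
\begin{equation*}
\eta_m \;=\; \max\!\left(\,CK^2\,\frac{\log m\,\twonorm{A}}{p\tr(A)},\; \sqrt{2C}\,K^2\sqrt{\frac{\log m\,\twonorm{A}}{p\tr(A)}}\,\right) \;=\; o(1).
\end{equation*}

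There is no real obstacle here; the only care needed is in the two PSD estimates $\fnorm{\diag(A)}^2,\fnorm{\offd(A)}^2\le \twonorm{A}\tr(A)$ that let us replace the complicated Frobenius quantities by the simple product $\twonorm{A}\tr(A)$, and in verifying that \eqref{eq::ratecond} drives the linear rate rather than the quadratic rate (or vice versa, depending on parameters) to cover the $\log m$ budget. Plugging the chosen $t$ into Corollary~\ref{coro::HWsparse} yields a failure probability at most $2\exp(-4\log m)\le 4/m^4$, and combining with the exact mean $p\tr(A)$ gives $|X^T D_\xi A D_\xi X|\le p\tr(A)(1+\eta_m)=p\tr(A)(1+o(1))$ on the corresponding event.
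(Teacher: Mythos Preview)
Your proposal is correct and follows essentially the same route as the paper: compute the mean $p\tr(A)$, invoke Corollary~\ref{coro::HWsparse}, and control the Frobenius quantities via the PSD inequality $\fnorm{A}^2\le \twonorm{A}\tr(A)$ (the paper states this explicitly), so that condition~\eqref{eq::ratecond} makes the deviation $o(p\tr(A))$. The only cosmetic difference is that the paper first writes the high-probability deviation bound $t\asymp \log m\,\twonorm{A}+\log^{1/2}m(\sqrt{p}\fnorm{\diag(A)}+p\fnorm{A})$ and then argues $t=o(p\tr(A))$, whereas you fix $t=\eta_m\,p\tr(A)$ up front and verify both rates; these are equivalent formulations of the same argument.
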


\begin{proof}
Define 
\bens
S = \sum_{i,j} a_{ij}(X_i \xi_i X_j \xi_j - \expct{X_i \xi_i X_j\xi_j}).
\eens
Thus $\E S = \sum_{i} a_{ii} \expct{X_i^2}\expct{\xi_i} =  p \tr(A)$.
We have under conditions of Theorem~\ref{thm::HWsparse}, with
probability at least $1-4/m^4$,
for some absolute constant $C$,
\bens
\lefteqn{
\abs{S} := \abs{X^T D_{\xi} A D_{\xi}  X - p\tr(A)}} \\
& \le &
C K^2 \log^{1/2} m \left( \log^{1/2} m \twonorm{A} + \sqrt{p}
   \fnorm{\diag(A)} +  p \fnorm{\offd(A)} \right) =: t
\eens
where under condition \eqref{eq::ratecond}, the deviation term is of a small order of the
expected value $p \tr(A)$; that is,
\bens
t \asymp \log m \twonorm{A} + \log^{1/2} m (\sqrt{p} \fnorm{\diag(A)} +  p
  \fnorm{A} )  =: I + II = o(p\tr(A)).
\eens
To see this, notice that \eqref{eq::ratecond} immediately implies that 
the first term in $t$ is of $o(p\tr(A))$.
Now in order for the second and third term to be of $o(p \tr(A))$,
we need that 
\bens
\sqrt{p} \fnorm{A} \log^{1/2} m  & \ll & p\tr(A) \text{ and hence }  p \gg \log m \fnorm{A}^2/\tr(A)^2
\eens
which is satisfied by \eqref{eq::ratecond} given that
$\frac{\twonorm{A}}{\tr(A)} \ge \frac{\fnorm{A}^2}{\tr(A)^2}$, which
in turn is due to $\fnorm{A}^2 \le
\tr(A) \twonorm{A}$.
\end{proof}

\begin{corollary}
\label{coro::final}
Suppose that \eqref{eq::ratecond} and all conditions in Corollary~\ref{coro::MN-sparse} hold.
Assume $EX_i^2 =1$.
Then with probability at least $1-\frac{4}{m^4}$, $ \abs{X^T D_0 D_{\xi} D_0  X} = p\tr(A_0) (1+o(1)).$
\end{corollary}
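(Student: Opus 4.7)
The plan is to mirror, almost verbatim, the proof of Corollary~\ref{coro::HW2}, now invoking Corollary~\ref{coro::MN-sparse} in place of Corollary~\ref{coro::HWsparse}. First I would observe that since $D_0$ is symmetric, $X^T D_0 D_\xi D_0 X = Y^T D_\xi Y$ with $Y = D_0 X$, and compute the mean: because $\E X_j^2 = 1$, we have $\E Y_i^2 = \sum_j (D_0)_{ij}^2 = (A_0)_{ii}$, so $\E Y^T D_\xi Y = \sum_i p_i (A_0)_{ii} = p \tr(A_0)$ under $p_1 = \cdots = p_m = p$.

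Next I would apply Corollary~\ref{coro::MN-sparse} to the centered quantity $S := Y^T D_\xi Y - p\tr(A_0)$. Choose the deviation $t$ so that both terms in the min in the tail bound contribute probability at most $1/m^4$; solving the two branches separately gives the analog of the bound in the proof of Corollary~\ref{coro::HW2}:
\begin{equation*}
\abs{S} \le C K^2 \log^{1/2} m \Bigl( \log^{1/2} m \,\twonorm{A_0} + \sqrt{p}\,\fnorm{\diag(A_0)} + p\,\fnorm{\offd(A_0)} \Bigr) =: t,
\end{equation*}
with probability at least $1 - 4/m^4$.

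It then remains to argue $t = o(p\tr(A_0))$ under \eqref{eq::ratecond}. The first summand $\log m \,\twonorm{A_0}$ is $o(p\tr(A_0))$ by the hypothesis directly. For the remaining two, I would use $\fnorm{\diag(A_0)}^2, \fnorm{\offd(A_0)}^2 \le \fnorm{A_0}^2 \le \tr(A_0)\twonorm{A_0}$, which is where positive semidefiniteness of $A_0$ (guaranteed by $A_0 = D_0^2$) enters. Then
\begin{equation*}
\log m \cdot p \fnorm{\diag(A_0)}^2 \le p \log m \, \tr(A_0) \twonorm{A_0} = o\!\bigl(p^2 \tr(A_0)^2\bigr),
\end{equation*}
so $\log^{1/2} m\, \sqrt{p}\,\fnorm{\diag(A_0)} = o(p\tr(A_0))$, and similarly $\log m \cdot p^2 \fnorm{\offd(A_0)}^2 \le p^2 \log m\, \tr(A_0)\twonorm{A_0} = o(p^3 \tr(A_0)^2) = o(p^2 \tr(A_0)^2)$, giving the third term also $o(p \tr(A_0))$.

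There is no genuine obstacle here — the work was done in Theorem~\ref{thm::MNsparse} and Corollary~\ref{coro::MN-sparse}. The only bookkeeping item is making sure the algebra linking $\fnorm{A_0}^2$ to $\tr(A_0)\twonorm{A_0}$ uses $A_0 \succeq 0$, which is immediate from $A_0 = D_0^2$ with $D_0$ symmetric.
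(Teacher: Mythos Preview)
Your proposal is correct and follows exactly the paper's approach: compute the mean $\E Y^T D_\xi Y = p\,\tr(A_0)$, invoke Corollary~\ref{coro::MN-sparse} to obtain the same deviation bound as in Corollary~\ref{coro::HW2} with $A_0$ in place of $A$, and then reuse the $o(p\,\tr(A_0))$ argument verbatim via $\fnorm{A_0}^2 \le \tr(A_0)\twonorm{A_0}$. The paper simply writes ``the rest of the proof follows from that of Corollary~\ref{coro::HW2}'' where you have spelled the steps out.
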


\begin{proof}
First by independence of $X$ and $\xi$, we have for $\E X_i^2 = 1$,
\bens
\expct{X ^T A_{\xi} X} & = & \E \sum_{k=1}^m X_k^2  A_{\xi, kk}=
\sum_{k=1}^m \E (X_k^2) \E (A_{\xi, kk}) \\
& = & 
\sum_{k=1}^m \E X_k^2 \E \sum_{\ell=1}^m \xi_{\ell}  d_{k \ell}^2 
= \sum_{\ell=1}^m p_{\ell}  \sum_{k=1}^m   d_{k \ell}^2 = 
\sum_{\ell=1}^m p_{\ell}  a_{\ell \ell}.
\eens
We have by Corollary~\ref{coro::MN-sparse}, with probability at least $1-\frac{4}{m^4}$,
\bens
 \abs{X^T D_0 D_{\xi} D_0  X}
& \le  &
  \sum_{i=1}^m a_{ii} p_i + C K^2 \log^{1/2} m \left( \log^{1/2} m \twonorm{A_0} +   
\sqrt{\bar{M}} \right) \\
& \le  &  
p\fnorm{D_0}^2 + C K^2 \log^{1/2} m \left( \log^{1/2} m \twonorm{A_0} + \sqrt{p} \fnorm{\diag(A_0)} +  p \fnorm{\offd(A_0)} \right).
\eens 
for some absolute constants $C$, where 
$\sqrt{\bar{M}} \le\sqrt{p} \fnorm{\diag(A_0)} +  p \fnorm{\offd(A_0)}$.
The rest of the proof for Corollary~\ref{coro::final} follows from that of 
Corollary~\ref{coro::HW2}.
\end{proof}

\subsection{Implications when $p_1, \ldots, p_m$ are not the same}
\label{sec::prelim}
We first need the following sharp statements about eigenvalues of a 
Hadamard product.  See for example Theorem~5.3.4~\cite{HJ91}.
\begin{theorem}
\label{eq::tightHard}
Let $A, B \in \R^{m \times m}$ be positive semidefinite.
Let $a_{\infty} := \max_{i=1}^m a_{ii}$ and $b_{\infty} := \max_{i=1}^m b_{ii}$.
Any eigenvalue of $\lambda(A \circ B)$ satisfies
\bens
\lambda_{\min}(A)\lambda_{\min}(B)
& \le & 
(\min_{i=1}^m a_{ii})\lambda_{\min}(B) \\
& \le & 
\lambda(A \circ B) \\
& \le & a_{\infty} \lambda_{\max}(B)
\le \lambda_{\max}(A)\lambda_{\max}(B).
\eens
\end{theorem}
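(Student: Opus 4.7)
The plan is to prove the middle inequalities (which carry the real content) via the spectral decomposition of $A$ combined with the Hadamard identity $A \circ xx^T = D_x A D_x$ already recorded in the paper, and then close out the outer inequalities by the standard variational observation $\lambda_{\min}(A) \le a_{ii} \le \lambda_{\max}(A)$ for PSD $A$.

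First I would write $A = \sum_{k=1}^m \lambda_k^A u_k u_k^T$ where $\lambda_k^A \ge 0$ since $A \succeq 0$ and $\{u_k\}$ is an orthonormal basis. By linearity of the Hadamard product and the identity $A \circ xx^T = D_x A D_x$ applied with $B$ in place of $A$, this gives the representation
\begin{equation*}
A \circ B \;=\; \sum_{k=1}^m \lambda_k^A \, (u_k u_k^T) \circ B \;=\; \sum_{k=1}^m \lambda_k^A \, D_{u_k} B\, D_{u_k}.
\end{equation*}
Since each $\lambda_k^A \ge 0$ and each $D_{u_k} B D_{u_k} \succeq 0$, this already reconfirms the Schur product theorem $A \circ B \succeq 0$; more importantly, for any unit vector $x \in \R^m$, setting $z_k := D_{u_k} x$ gives
\begin{equation*}
x^T (A \circ B) x \;=\; \sum_{k=1}^m \lambda_k^A \, z_k^T B z_k.
\end{equation*}

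Next I would sandwich $z_k^T B z_k$ between $\lambda_{\min}(B)\|z_k\|^2$ and $\lambda_{\max}(B)\|z_k\|^2$. The key simplification is that the weighted total $\sum_k \lambda_k^A \|z_k\|^2$ collapses onto the diagonal of $A$:
\begin{equation*}
\sum_{k=1}^m \lambda_k^A \, \|D_{u_k} x\|^2 \;=\; \sum_{i=1}^m x_i^2 \sum_{k=1}^m \lambda_k^A u_{k,i}^2 \;=\; \sum_{i=1}^m x_i^2 \, a_{ii}.
\end{equation*}
Combining these, for every unit $x$,
\begin{equation*}
\bigl(\min_i a_{ii}\bigr)\lambda_{\min}(B) \;\le\; \lambda_{\min}(B)\sum_i x_i^2 a_{ii} \;\le\; x^T (A \circ B) x \;\le\; \lambda_{\max}(B)\sum_i x_i^2 a_{ii} \;\le\; a_\infty \lambda_{\max}(B).
\end{equation*}
Taking the infimum and supremum over $\|x\|=1$ yields the two inner inequalities simultaneously, and in fact gives them for every eigenvalue of $A \circ B$ by the min-max theorem.

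Finally, for the outer inequalities I would invoke the variational characterization: for any $i$, $a_{ii} = e_i^T A e_i$, so since $A$ is PSD, $\lambda_{\min}(A) \le a_{ii} \le \lambda_{\max}(A)$. Taking the min, resp.\ max, over $i$ gives $\lambda_{\min}(A) \le \min_i a_{ii}$ and $a_\infty \le \lambda_{\max}(A)$. The main (and only real) obstacle is spotting the right representation of $A \circ B$ via the spectral decomposition of $A$ so that the Hadamard product becomes a sum of conjugations $D_{u_k} B D_{u_k}$; once that is in hand, the rest is bookkeeping. Note that the role of $A$ and $B$ is not symmetric in the inner bounds, which is exactly why the sharper $a_\infty \lambda_{\max}(B)$ (and analog) appear rather than the cruder $\lambda_{\max}(A)\lambda_{\max}(B)$ implied by viewing $A \circ B$ as a principal submatrix of $A \otimes B$.
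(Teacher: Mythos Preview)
Your proof is correct. Note, however, that the paper does not supply its own proof of this theorem: it is quoted verbatim as Theorem~5.3.4 of \cite{HJ91} and used as a black box in the proof of Corollary~\ref{coro::tail}. Your argument via the spectral decomposition $A=\sum_k \lambda_k^A u_k u_k^T$ together with the identity $(u_k u_k^T)\circ B = D_{u_k} B D_{u_k}$ is in fact essentially the classical proof given in that reference, so there is nothing to compare against in the present paper.
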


\begin{corollary}
\label{coro::tail}
Suppose all conditions in Theorem~\ref{thm::MNsparse} hold.
Suppose $EX_i^2 =1$.  Let $\vecp = (p_1, \ldots, p_m)$.
Let $\onenorm{\vecp} := \sum_{i=1}^m p_i$ and $\twonorm{\vecp}^2 =
\sum_{i=1}^m p_i^2$. 
Then with probability at lest $1-4/m^4$,
\bens
 \abs{X^T D_0 D_{\xi} D_0  X}
& \le  & 
 \sum_{i=1}^m  p_i \twonorm{d_i}^2 + C K^2 \log^{1/2} m  \twonorm{D_0}  \left( \log^{1/2} m
   \twonorm{D_0} + 2(\max_{i} \twonorm{d_i} )\onenorm{\vecp}^{1/2}  \right).
\eens
\end{corollary}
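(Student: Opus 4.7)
The plan is to apply Theorem~\ref{thm::MNsparse} with a deviation level $t$ tuned so that the resulting probability bound is at most $4/m^4$, compute the expectation explicitly, and then bound the quantity $\bar{M} = \sum_i p_i a_{ii}^2 + \sum_{i\neq j} a_{ij}^2 p_i p_j$ in terms of $\twonorm{D_0}$, $\max_i \twonorm{d_i}$, and $\onenorm{\vecp}$. The overall structure mirrors that of Corollary~\ref{coro::final}, but the main work lies in replacing the uniform scalar $p$ with the full vector $\vecp$ in the bound on $\bar{M}$.

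First, I would choose $t = C K^2 \bigl(\log m\,\twonorm{A_0} + \log^{1/2} m\,\sqrt{\bar{M}}\bigr)$ so that both $t^2/(K^4\bar{M})$ and $t/(K^2\twonorm{A_0})$ exceed $(4/c_2)\log m$, forcing the probability bound in Theorem~\ref{thm::MNsparse} below $4/m^4$. Next, for the expectation, using independence of $X$ and $\xi$ and $\E X_i^2=1$,
\begin{equation*}
\E\bigl[X^T D_0 D_\xi D_0 X\bigr] = \sum_j p_j \,\E\bigl[(e_j^T D_0 X)^2\bigr] = \sum_j p_j \twonorm{d_j}^2,
\end{equation*}
which uses only that $D_0$ is symmetric, so $a_{jj}=\twonorm{d_j}^2$ and $a_{ij}=\langle d_i,d_j\rangle$.

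The crux is the bound on $\bar{M}$. For the diagonal piece I would write
\begin{equation*}
\sum_i p_i a_{ii}^2 = \sum_i p_i \twonorm{d_i}^4 \le (\max_i \twonorm{d_i})^2 \sum_i p_i \twonorm{d_i}^2 \le (\max_i \twonorm{d_i})^4 \onenorm{\vecp}.
\end{equation*}
For the off-diagonal piece, let $D_p=\diag(\vecp)$ and observe the identity
\begin{equation*}
\sum_{i,j} p_i p_j \langle d_i,d_j\rangle^2 = \tr\bigl((D_0 D_p D_0)^2\bigr) = \fnorm{D_0 D_p D_0}^2,
\end{equation*}
which follows from $\sum_i p_i d_i d_i^T = D_0 D_p D_0$. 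Since $M:=D_0 D_p D_0 \succeq 0$, I would then apply $\fnorm{M}^2 \le \twonorm{M}\tr(M)$ together with $\twonorm{M}\le \twonorm{D_0}^2 \max_i p_i \le \twonorm{D_0}^2$ and $\tr(M)=\sum_j p_j \twonorm{d_j}^2 \le (\max_j \twonorm{d_j})^2 \onenorm{\vecp}$, yielding
\begin{equation*}
\sum_{i\neq j} p_i p_j a_{ij}^2 \le \twonorm{D_0}^2 (\max_i \twonorm{d_i})^2 \onenorm{\vecp}.
\end{equation*}
Combining with the diagonal bound and using $\max_i \twonorm{d_i} \le \twonorm{D_0}$ gives $\bar{M} \le 2\twonorm{D_0}^2(\max_i\twonorm{d_i})^2 \onenorm{\vecp}$, hence $\sqrt{\bar{M}} \le 2\twonorm{D_0}(\max_i\twonorm{d_i})\onenorm{\vecp}^{1/2}$.

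Substituting this back into the chosen $t$, together with $\twonorm{A_0}=\twonorm{D_0}^2$, and applying the triangle inequality with the computed expectation yields the stated bound. The main obstacle is the off-diagonal estimate: a naive Cauchy--Schwarz bound produces $\norm{\vecp}_\infty$ or $\norm{\vecp}_2$ dependence, so the trick of rewriting the sum as $\fnorm{D_0 D_p D_0}^2$ and exploiting the PSD inequality $\fnorm{M}^2 \le \twonorm{M}\tr(M)$ is essential for obtaining the $\onenorm{\vecp}$ rate advertised in the corollary.
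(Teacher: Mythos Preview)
Your proof is correct and arrives at the same final bound $\bar{M}\le 2\,a_\infty\twonorm{A_0}\onenorm{\vecp}$ (in the paper's notation $a_\infty=\max_i\twonorm{d_i}^2$, $\twonorm{A_0}=\twonorm{D_0}^2$), but the off-diagonal estimate is obtained by a genuinely different argument. The paper bounds $\sum_{i\neq j}a_{ij}^2p_ip_j\le \vecp^T(A_0\circ A_0)\vecp$ and then invokes the Hadamard-product eigenvalue inequality $\lambda_{\max}(A_0\circ A_0)\le a_\infty\,\lambda_{\max}(A_0)$ (Theorem~\ref{eq::tightHard}) together with $\twonorm{\vecp}^2\le\onenorm{\vecp}$. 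You instead recognize the full sum as $\fnorm{D_0D_pD_0}^2$ and apply the elementary PSD inequality $\fnorm{M}^2\le\twonorm{M}\tr(M)$. Your route is self-contained and avoids the cited Hadamard-product result entirely, at the modest cost of bounding $\sum_{i,j}$ rather than $\sum_{i\neq j}$ (harmless here). The paper's route, on the other hand, keeps the intermediate $\twonorm{\vecp}^2$ dependence visible before coarsening to $\onenorm{\vecp}$, which is what drives the remark following the corollary; your bound goes straight to $\onenorm{\vecp}$ and does not expose that intermediate refinement.
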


\begin{proof}
Recall $A_0 = (a_{ij}) = D_0^2 \succeq 0$.
Let $a_{\infty} := \max_{i=1}^m a_{ii} =\max_{i} \twonorm{d_i}^2$. 
Thus we have $a_{\infty} \le \twonorm{D_0}^2$.
Denote by $p = (p_1, \ldots,p_m)$. We have by Theorem~\ref{eq::tightHard},
\bens
\bar{M} & = & \sum_{i=1}^m p_i a^2_{ii} + \sum_{i\not=j} a^2_{ij} p_i p_j 
\le \sum_{i=1}^m p_i a^2_{ii} + \vecp^T( A_0 \circ A_0) \vecp \\
& \le & 
 a_{\infty}^2 \onenorm{\vecp}    
+ \lambda_{\max}(A_0 \circ A_0)\twonorm{\vecp}^2 \\
& \le &   a_{\infty}^2 \onenorm{\vecp}  
+  a_{\infty} \twonorm{A_0} \twonorm{\vecp}^2
 \le 2 a_{\infty} \twonorm{A_0} \onenorm{\vecp}.
\eens
where  $\twonorm{\vecp}^2 \le \onenorm{\vecp}$.
The corollary thus follows immediately from Theorem~\ref{thm::MNsparse}.
\end{proof}

\begin{remark}
Assume that $p_i \ge \frac{\log m}{m}$ and hence $\onenorm{\vecp} \ge \log
m$. Then we have $\twonorm{\vecp} \le \onenorm{\vecp}^{1/2} \le \onenorm{\vecp}$.
Notice that the second term starts to dominate when $\onenorm{\vecp}^{1/2}
\gg \log m$ while the total deviation remains to be a small order of the
mean $\sum_{i=1}^m p_i \twonorm{d_{i}}^2$ so long as 
$$\onenorm{\vecp} \gg
\frac{\max_k \twonorm{d_k}^2 \twonorm{D_0}^2}{\min_{k} \twonorm{d_k}^4}.$$
\end{remark}

\section{Proof of Theorem~\ref{thm::HWsparse}}
\label{sec::proofofHW}
\begin{proofof2}
The structure of our proof follows that of Theorem 1.1
by~\cite{RV13}. The problem reduces to estimating the diagonal and the
off-diagonal sums.

\noindent{\bf Part I: Diagonal Sum.}
Define 
\ben
\label{eq::defineS0}
S_0 := 
\sum_{k=1}^m   a_{kk} \xi_k X_k^2-
\E\sum_{k=1}^m   a_{kk} \xi_k  X_k^2 \; \; \text{ where } \; 
\E\sum_{k=1}^m  a_{kk} \xi_k X_k^2 = \sum_{k=1}^m  a_{kk} p_k
 \E X_k^2.
\een
\begin{lemma}
\label{lemma::exp}
Let $X$ and $\xi$ be defined as in Theorem~\ref{thm::HWsparse}.
Let $A$ be an $m \times m$ matrix. Then, for every $t > 0$,
\bens
\lefteqn{
\prob{\abs{
\sum_{k=1}^m a_{kk} \xi_k X_k^2 - \sum_{k=1}^m a_{kk} p_k \E X_k^2}  >
t} \le \prob{S_0  > t} + \prob{S_0  < - t}} \\
& \le & 
2\exp \left[- \inv{4e}\min\left(\frac{t^2}{3 K^4 \sum_{k=1}^m a_{kk}^2
    p_k},
\frac{t}{K^2 \max_{k} \abs{a_{kk}}} \right)\right].
\eens
\end{lemma}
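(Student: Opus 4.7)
The plan is to apply the Chernoff--MGF method to the independent sum $S_0 = \sum_{k=1}^m W_k$, where $W_k := a_{kk}(\xi_k X_k^2 - p_k \E X_k^2)$. Since the $X_k$ are independent, the $\xi_k$ are independent, and $X$ is independent of $\xi$, the summands $W_k$ are independent and mean-zero, so $\log \E e^{\lambda S_0} = \sum_{k=1}^m \log \E e^{\lambda W_k}$ and the problem reduces to controlling each per-summand MGF.

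The key step is to show, for $|\lambda a_{kk}| \le 1/(23.5 e K^2)$, that $\E e^{\lambda W_k} \le \exp(c\, \lambda^2 a_{kk}^2 p_k K^4)$, with the \emph{Bernoulli variance} $p_k$ (not $1$) multiplying the quadratic term. I would obtain this by conditioning on $\xi_k \in \{0,1\}$, which factorizes the MGF as
\[
\E e^{\lambda W_k} = (1-p_k)\, e^{-\lambda a_{kk} p_k \E X_k^2} + p_k\, e^{\lambda a_{kk}(1-p_k)\E X_k^2}\, \E e^{\lambda a_{kk}(X_k^2 - \E X_k^2)}.
\]
Lemma~\ref{lemma::expmgf2} bounds the remaining $\psi_1$-MGF by $\exp(C_0 \lambda^2 a_{kk}^2 K^4)$ in the stated range. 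Pulling out $e^{-\lambda a_{kk} p_k \E X_k^2}$ and applying the elementary inequality $\log(1 + p_k(e^y - 1)) \le p_k(e^y - 1)$ reduces the bound on $\log \E e^{\lambda W_k}$ to
\[
p_k\bigl(e^{\lambda a_{kk}\E X_k^2 + C_0 \lambda^2 a_{kk}^2 K^4} - 1 - \lambda a_{kk} \E X_k^2\bigr).
\]
Using $\E X_k^2 \le c K^2$ (from $\norm{X_k}_{\psi_2}\le K$) and expanding $e^y - 1 - y \le y^2$ in the small-$y$ regime collapses this to $c'\, p_k \lambda^2 a_{kk}^2 K^4$ for an absolute constant $c'$, which is the desired per-summand bound.

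Having the per-summand bound in hand, summing over $k$ and applying Markov yields
\[
\prob{S_0 > t} \le \exp\Bigl(-\lambda t + c' \lambda^2 K^4 \sum_{k=1}^m a_{kk}^2 p_k\Bigr), \qquad 0 \le \lambda \le \tfrac{1}{23.5 e K^2 \max_k|a_{kk}|}.
\]
Optimizing over $\lambda$ in this range produces the minimum of the Gaussian arm $t^2/(3 K^4 \sum_k a_{kk}^2 p_k)$ (interior optimum) and the sub-exponential arm $t/(K^2 \max_k |a_{kk}|)$ (boundary value), with leading constant $1/(4e)$ as stated. The lower tail $\prob{S_0 < -t}$ follows identically by applying the same argument to $-S_0$, and a union bound supplies the factor $2$.

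The main technical obstacle is inserting the $p_k$ factor into the quadratic coefficient of the per-summand MGF. Without it, the diagonal sum would be controlled by the Frobenius-type quantity $\sum_k a_{kk}^2$ instead of the sharper $\sum_k a_{kk}^2 p_k$, losing the sparsity gain that is the whole point of the lemma. The factorization of the MGF across the Bernoulli value of $\xi_k$, together with the logarithmic mixture inequality $\log(1+p_k(e^y-1))\le p_k(e^y-1)$, is precisely what supplies this factor.
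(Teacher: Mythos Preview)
Your proposal is correct and follows essentially the same route as the paper: Chernoff's method with a per-summand MGF bound obtained by conditioning on $\xi_k\in\{0,1\}$ so that the Bernoulli mixture supplies the crucial $p_k$ factor, then optimizing over $\lambda$. The only cosmetic difference is that the paper invokes Lemma~\ref{lemma::taylor}, which gives the \emph{additive} bound $\E e^{\lambda a_{kk}X_k^2}-1\le \lambda a_{kk}\E X_k^2+16\lambda^2 a_{kk}^2 K^4$ directly, so that after multiplying by $p_k$ and using $1+x\le e^x$ the per-summand bound $\exp(16\lambda^2 a_{kk}^2 p_k K^4)$ falls out without the extra $e^y-1-y\le y^2$ step you need when starting from the centered MGF bound of Lemma~\ref{lemma::expmgf2}.
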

We prove Lemma~\ref{lemma::exp} after we state Lemma~\ref{lemma::taylor}.
For the general case where $X_k$ are mean-zero independent sub-gaussian random
variables with $\norm{X_k}_{\psi_2} \le K$,
we first state the following bound on the moment generating function
of $X_k^2$. 
\begin{lemma}
\label{lemma::taylor}
Suppose that $\abs{\lambda} < 1/(4  e K^2 \max_{k}
\abs{a_{kk}})$.
Then for all $k$, we have for all $a_{kk} \in \R$
\ben
\label{eq::expmgf}  
\E \exp\left(\lambda a_{kk}  X_k^2\right)  -1 
 \le \lambda a_{kk} \E X_k^2 + 16
\lambda^2 a_{kk}^2  K^4.
\een
\end{lemma}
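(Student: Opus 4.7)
The plan is to Taylor expand the moment generating function in $\lambda$ and control the tail using the sub-gaussian moment estimate. Writing
\[
\E\exp\bigl(\lambda a_{kk} X_k^2\bigr) \;=\; 1 + \lambda a_{kk}\,\E X_k^2 + \sum_{p\ge 2}\frac{(\lambda a_{kk})^p}{p!}\,\E X_k^{2p},
\]
the inequality \eqref{eq::expmgf} reduces to showing that the remainder series is bounded by $16\lambda^2 a_{kk}^2 K^4$. When $\lambda a_{kk}<0$ the odd-$p$ terms of the remainder are non-positive and only decrease the left-hand side, so it suffices to estimate $\sum_{p\ge 2}\abs{\lambda a_{kk}}^p\,\E X_k^{2p}/p!$, i.e., to treat $\lambda a_{kk}$ as if it were positive.

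Next I would invoke the $\psi_2$-moment bound $\E X_k^{2p}\le K^{2p}(2p)^p$, which is immediate from $\norm{X_k}_{\psi_2}\le K$. The $p=2$ term then evaluates exactly to $(\lambda a_{kk})^2 K^4\cdot 4^2/2! = 8\lambda^2 a_{kk}^2 K^4$, while consecutive terms of the majorant satisfy
\[
\frac{\abs{\lambda a_{kk}}^{p+1}K^{2(p+1)}(2p+2)^{p+1}/(p+1)!}{\abs{\lambda a_{kk}}^p K^{2p}(2p)^p/p!} \;=\; 2\bigl(1+\tfrac{1}{p}\bigr)^{p}\,\abs{\lambda a_{kk}}\,K^2 \;\le\; 2e\,\abs{\lambda a_{kk}}\,K^2,
\]
using $(1+1/p)^p\le e$ for $p\ge 1$. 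The hypothesis $\abs{\lambda}< 1/(4eK^2\max_k\abs{a_{kk}})$ is exactly calibrated so that $2eK^2\abs{\lambda a_{kk}}\le 1/2$, making the tail geometric with ratio at most $1/2$. Summing the geometric majorant gives
\[
\sum_{p\ge 2}\abs{\lambda a_{kk}}^p\,\E X_k^{2p}/p! \;\le\; 8\lambda^2 a_{kk}^2 K^4\sum_{q\ge 0}(1/2)^q \;=\; 16\lambda^2 a_{kk}^2 K^4,
\]
which is precisely the claimed bound.

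The main obstacle is purely the bookkeeping of the explicit constant $16$: this is what pins down the constant $4e$ in the standing condition on $\abs{\lambda}$. Any cruder Stirling-type estimate of either the moments or the ratio produces a numerically worse constant, so the sharp evaluation of the $p=2$ term combined with the tight ratio bound $2e\abs{\lambda a_{kk}}K^2\le 1/2$ is essentially the entire proof. The argument follows Lemma 5.15 of~\cite{Vers12}, but since we want the additive estimate~\eqref{eq::expmgf} rather than an exponential one, no application of $1+u\le e^u$ is needed at this stage.
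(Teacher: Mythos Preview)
Your argument is correct and follows the same strategy as the paper: Taylor expand $\E\exp(\lambda a_{kk}X_k^2)$, bound the moments via $\E X_k^{2p}\le K^{2p}(2p)^p$ from $\norm{X_k}_{\psi_2}\le K$, and sum a geometric tail with ratio $\le 1/2$ under the standing condition on $\abs{\lambda}$. The only cosmetic difference is that the paper passes through $Y_k=X_k^2/\norm{X_k^2}_{\psi_1}$ and uses Stirling's approximation to control $p^p/p!$, whereas you compute the $p=2$ term exactly and use the ratio bound $(1+1/p)^p\le e$; both routes yield the same constant $16$.
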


\begin{proofof}{Lemma~\ref{lemma::exp}}
We first state some simple fact: $\max_{k=1}^m \E X_i^2 \le   K^2.$
By independence of $X_1, \ldots, X_k$
and $\xi_1, \ldots, \xi_k$, 
we  bound the moment generating function of $S_0$ as follows: 
for $\abs{\lambda} \le \inv{4 e  K^2\max_{k} \abs{a_{kk}}}$
\bens
\lefteqn{
\E \exp(\lambda S_0) = 
\E \exp(\lambda  \sum_{k=1}^m  \xi_k X_k^2 a_{kk} - 
\lambda\sum_{k=1}^m  p_k a_{kk} \E X_k^2) }\\
& = &
\prod_{k=1}^m \left(\frac{\E \exp(\lambda   a_{kk} \xi_k X_k^2)}
{\exp(\lambda p_k a_{kk} \E X_k^2)}\right)  = 
\prod_{k=1}^m 
\frac{\E_{\xi}\E_{X} 
 \exp(\lambda   a_{kk} \xi_k X_k^2)}{\exp(\lambda
  p_k a_{kk} \E X_k^2)} \\
& \le &
\label{eq::chimgf3}
\prod_{k=1}^m \frac{1+p_k \left(\lambda a_{kk} \E X_k^2 + 16
\lambda^2 a_{kk}^2 K^4\right)}{\exp(\lambda p_k a_{kk} \E X_k^2)} \\
 & \le &
\prod_{k=1}^m \frac{
\exp\left(\lambda p_k a_{kk} \E X_k^2 + 16
\lambda^2 p_k a_{kk}^2  K^4\right)}
{\exp(\lambda p_k a_{kk} \E X_k^2)}  = 
\exp\left(16\lambda^2 K^4 \sum_{k=1}^m 
p_k a_{kk}^2\right)
\eens
where we used \eqref{eq::expmgf} for the first inequality and the fact
that $1+ x\le e^x$ for the second inequality.
Hence for $0< \lambda \le \inv{4 e K^2\max_{k}\abs{a_{kk}}}$, we have 
\bens
\prob{S_0 > t} \le 
\frac{\E \exp(\lambda S_0)}{e^{\lambda t}} \le  
\exp\left(-\lambda t + 
16 K^4 \lambda^2 \sum_{k=1}^m p_k  a_{kk}^2 \right) 
\eens
for which the optimal choice of $\lambda$ is
\bens
\label{eq::lambda-exp}
\lambda = \min \left(\frac{t}{32 K^4
\sum_{k=1}^m p_k  a_{kk}^2 },  \inv{4 e K^2\max_{k}\abs{a_{kk}}}\right)
\eens
Thus we have  
\bens\lefteqn{
\prob{\sum_{k=1}^m a_{kk} \xi_k X_k^2 - \sum_{k=1}^m a_{kk} p_k \E X_k^2  > t} } \\
&\le & 
\exp \left[-\inv{4e}\min\left(\frac{t^2}{3 K^4  \sum_{k=1}^m p_k a_{kk}^2},
\frac{t}{K^2 \max_{k} \abs{a_{kk}}} \right)\right].
\eens
We note that these constants have not been optimized.
Repeating the arguments for $-A$ instead of $A$, we obtain
for every $t>0$, and for 
$S'_0 := \sum_{k=1}^m (-a_{kk}) \xi_k X_k^2 - \sum_{k=1}^m (- a_{kk}) p_k
\E X_k^2$
\bens
\lefteqn{
\prob{
\sum_{k=1}^m a_{kk} \xi_k X_k^2 - \sum_{k=1}^m a_{kk} p_k \E X_k^2  < - t} = 
\prob{S'_0 > t }}\\
& \le &
\exp \left[- \inv{4e}\min\left(\frac{t^2}{3 K^4 \sum_{k=1}^m p_k a_{kk}^2},
\frac{t}{K^2 \max_{k} \abs{a_{kk}}} \right)\right].
\eens
The lemma thus holds.
\end{proofof}

\noindent{\bf Part II: Off-diagonal Sum.}
We now focus on bounding the off-diagonal part of the sum:
\bens
\label{eq::defineS1}
S_{\offd} := \sum_{i\not=j}^m a_{ij} X_i X_j \xi_i \xi_j
\eens
 where by independence of $X$ and $\xi$,  
$\E S_{\offd} = \sum_{i\not=j}^m  a_{ij} \E X_i \E X_j \E \xi_i \E \xi_j = 0.$

We will show that the following large deviation inequality holds  for all $t > 0$,
\ben
\label{eq::offD}
\prob{\abs{S_{\offd}} > t} 
&  \le &  
2 \exp \left(- c\min\left(\frac{t^2}{K^4 \sum_{i\not=j} a_{ij}^2 p_i p_j},
    \frac{t}{K^2 \twonorm{A}} \right)\right) 
\een
First we prove a bound  on the moment  generating function for the
off-diagonal sum $S_{\offd}$.
We assume without loss of generality that $K = 1$ by replacing $X$
with $X/K$. Let $C_4$ be a constant to be specified. 
It holds that for all $\abs{\lambda} \le\inv{2\sqrt{C_4} \twonorm{A}}$
\ben
\label{eq::mgfboundS}
\E \exp(\lambda S_{\offd}) 
& \le &
\exp\left(1.44 C_4
 \lambda^2 \sum_{i\not=j}
  a_{ij}^2 p_i p_j\right).
\een
Thus we have  for $0 \le \lambda \le\inv{2\sqrt{C_4} \twonorm{A}}$ and  $t>0$, 
\bens
\label{eq::mgfSlarge}
\prob{S_{\offd} > t} \le 
\frac{\E \exp(\lambda S_{\offd})}{e^{\lambda t}} 
\le  
\exp\left(-\lambda t + 
1.44 C_4\lambda^2 \sum_{i\not=j} p_i p_j  a_{ij}^2 \right). 
\eens
Optimizing over $\lambda$, we conclude that 
\ben
\label{eq::lambda-quad-ber}
\prob{S_{\offd} > t} \le\exp \left(- c\min\left(\frac{t^2}{\sum_{i\not=j} a_{ij}^2 p_i p_j},
    \frac{t}{\twonorm{A}} \right)\right) =: q_1
\een
Repeating the arguments for $-A$ instead of $A$, we obtain for 
$S' := \sum_{i\not=j}^m (-a_{ij}) X_i X_j \xi_i \xi_j = -S_{\offd}$, 
 $0 \le \lambda \le\inv{2\sqrt{C_4} \twonorm{A}}$ and  $t>0$, 
\bens
 \prob{S' > t} \le 
\frac{\E \exp(\lambda S')}{e^{\lambda t}} 
=\frac{\E \exp(-\lambda S_{\offd})}{e^{\lambda t}} \le  
\exp\left(-\lambda t + 1.44 C_4\lambda^2 
\sum_{i\not=j} p_i p_j  a_{ij}^2 \right) \le q_1
\eens
by \eqref{eq::mgfboundS} and \eqref{eq::lambda-quad-ber}.
Thus we have 
\bens
\prob{\abs{S_{\offd}} > t} =\prob{S_{\offd} > t} + \prob{S_{\offd}<-t} = \prob{S_{\offd} > t} +
\prob{S'> t} = 2q_1.
\eens
Thus~\eqref{eq::offD} holds for all $t > 0$.
The theorem is thus proved by summing up the bad events for diagonal
sum and the non-diagonal sum 
while adjusting the constant $c$ in~\eqref{eq::q1}.

The proof of \eqref{eq::mgfboundS} follows essentially from the
decoupling and reduction arguments in~\cite{RV13} and thus omitted
from the main body of the paper.
For completeness, we include the full proof in Appendix~\ref{sec::HWapp}. 
See for example~\cite{PM95,PG99} for comprehensive discussions on modern decoupling methods.
\end{proofof2}

\section{Proof of Theorem~\ref{thm::MNsparse}}
\label{sec::proofofMN}
\begin{proofof2}
Let $X, \xi$, $D_0$ and $D_{\xi}$ be defined as in
Theorem~\ref{thm::MNsparse}. 
We assume without loss of generality that
$K = 1$ by replacing $X$ with $X/K$. 
Denote by  $\xi = (\xi_1, \ldots, \xi_m) \in \{0, 1\}^m$ a random vector
with independent Bernoulli random variables $\xi_i$ such that
$\xi_i = 1$ with probability $p_i$ and $0$ otherwise.

We will bound the diagonal and the off-diagonal sums separately. 
Let $D_0 = [d_1, d_2, \ldots, d_m]$ be a symmetric matrix.
Recall that we need to estimate
\bens
q := \prob{\abs{X^T A_{\xi} X  - \expct{X ^T A_{\xi} X}}  > t}
\; \; \text{ where } \;\;A_{\xi}  =  D_0 D_{\xi}  D_0 =: (\tilde{a}_{ij})
\eens
We first separate the diagonal sum from the off-diagonal sum as follows:
\bens
\abs{X^T A_{\xi} X  - \expct{X ^T A_{\xi} X}} & \le & 
\abs{\sum_{i \not=j} X_i X_jA_{\xi, ij}}  + 
\abs{ \sum_{k=1}^m  X_k^2 A_{\xi, kk} - \E (X_k^2)\E( A_{\xi,  kk})} \\
& =: & \abs{S_{\off}} +\abs{S_{\diag}}
\eens
where $S_{\off}$ and $S_{\diag}$ denote the following random variables:
\bens
S_{\off} & := & \sum_{i \not= j} X_i X_{j} A_{\xi, i j}
= \sum_{i \not= j} X_i X_{j} \tilde{a}_{i j} \; \; \text{ and } \\
S_{\diag} & := & \sum_{k=1}^m  X_k^2 A_{\xi, kk} - \E (X_k^2) \E(A_{\xi,  kk}).
\eens
To prove Lemma~\ref{lemma::expmgfS2-devi}, we need the following
bounds on moment  generating functions for the diagonal sum in $S_{\diag}$ in
Lemma~\ref{lemma::diagmgfbound} and 
the off-diagonal sum $S_{\offd}$ in Lemma~\ref{lemma::expmgfS1}.
Let $A_0 = D_0^2 =(a_{ij}) \succeq0$.  
The constants in the expression for $N$ (and $M$) are not being optimized:
\ben
\label{eq::defineN}
N & = &  82 \sum_{i=1}^m  a_{ii}^2 p_i + 
108  \sum_{i\not=j} a_{ij}^2 p_i p_j, 
\een
\begin{lemma}
\label{lemma::diagmgfbound}
For all  $\abs{\lambda} \le \inv{128 \twonorm{A_0}}$,
\bens
\E \exp(\lambda S_{\diag}) \le \exp\big(\lambda^2 N \big)  \; \text{ and } \; 
\E \exp(-\lambda S_{\diag}) \le  \exp\big(\lambda^2 N \big).
\eens
\end{lemma}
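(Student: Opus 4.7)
The plan is to integrate out the $X$-randomness conditionally on $\xi$ and then handle the resulting Bernoulli quadratic form using Theorem~\ref{thm::Bernmgf}. Since the MGF estimates used are even in $\lambda$ at leading order, the same argument handles the bound for $-\lambda$, so I describe only the $+\lambda$ direction.

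\textbf{Step 1 (integrate out $X$).} Set $Z_k := X_k^2 - \E X_k^2$ and split
$$
S_{\diag} = \sum_k A_{\xi,kk}\, Z_k + \sum_k \E(X_k^2)\bigl(A_{\xi,kk}-\E A_{\xi,kk}\bigr).
$$
Because $A_0 \succeq 0$, $A_{\xi,kk} = \sum_i \xi_i d_{ki}^2 \le a_{kk} \le \twonorm{A_0}$ pointwise, so under $|\lambda| \le 1/(128 \twonorm{A_0})$ the hypothesis $|\lambda A_{\xi,kk}| \le 1/(23.5 e)$ of Lemma~\ref{lemma::expmgf2} holds. Conditioning on $\xi$ and using independence of the $X_k$,
$$
\E_X \exp(\lambda S_{\diag} \mid \xi) \le \exp\bigl(\lambda T(\xi) + C_0 \lambda^2\, \xi^T B \xi\bigr),
$$
where $T(\xi) := \sum_i(\xi_i - p_i) w_i$ with $w_i := \sum_k \E(X_k^2)\, d_{ki}^2 \le a_{ii}$, and $B$ is the entrywise-nonnegative matrix with $B_{ij} := \sum_k d_{ki}^2 d_{kj}^2$.

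\textbf{Step 2 (integrate out $\xi$).} A direct application of Theorem~\ref{thm::Bernmgf} to $\exp(\lambda T + C_0 \lambda^2 \xi^T B \xi)$ would leave a residual linear-in-$\lambda$ contribution from the $C_5$-correction that cannot be absorbed into the target $\exp(\lambda^2 N)$, so I instead apply Cauchy--Schwarz to decouple the linear-in-$\xi$ and quadratic-in-$\xi$ pieces:
$$
\E_\xi \exp(\lambda T + C_0 \lambda^2 \xi^T B \xi) \le \sqrt{\E \exp(2\lambda T)} \cdot \sqrt{\E \exp(2 C_0 \lambda^2\, \xi^T B \xi)}.
$$
The first factor, a centered Bernoulli sum, is bounded via Lemma~\ref{lemma::bern-sum} by $\exp(O(\lambda^2) \sum_i p_i a_{ii}^2)$, using $w_i^2 \sigma_i^2 \le p_i a_{ii}^2$. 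The second factor is bounded via Theorem~\ref{thm::Bernmgf} applied to the nonnegative matrix $B$ with parameter $2 C_0 \lambda^2$, whose admissibility $2C_0\lambda^2\max(\|B\|_1,\|B\|_\infty)\le 1/104$ is verified from $\sum_j B_{ij} = \sum_k d_{ki}^2 a_{kk} \le \twonorm{A_0}^2$ and the choice of $\lambda$; the resulting exponent is $O(\lambda^2)$ times a linear combination of $\sum_i p_i B_{ii}$ and $\sum_{i\ne j} p_i p_j B_{ij}$.

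\textbf{Step 3 ($B$-to-$a$ translation; main obstacle).} The diagonal is routine: $B_{ii} = \sum_k d_{ki}^4 \le (\sum_k d_{ki}^2)^2 = a_{ii}^2$. The delicate off-diagonal part is the crux of the proof, because entrywise $B_{ij}$ can substantially exceed $a_{ij}^2$ for $i \ne j$ (for instance $D_0 = \bigl(\begin{smallmatrix}1 & 1\\ 1 & -1\end{smallmatrix}\bigr)$ gives $B_{12} = 2$ but $a_{12} = 0$), so no entrywise bound $B_{ij} \le C a_{ij}^2$ holds. The saving is the following Frobenius-norm identity: setting $C := D_0 D_p D_0$ with $D_p := \diag(p_1, \ldots, p_m)$, one computes $C_{k\ell} = \sum_i p_i d_{ki} d_{\ell i}$, and therefore
$$
\sum_{i,j} p_i p_j B_{ij} = \sum_k C_{kk}^2 \le \fnorm{C}^2 = \sum_{i,j} p_i p_j \Bigl(\sum_k d_{ki} d_{kj}\Bigr)^2 = \sum_{i,j} p_i p_j a_{ij}^2.
$$
Combined with $B_{ii} \le a_{ii}^2$, this yields $\sum_i p_i B_{ii} + \sum_{i \ne j} p_i p_j B_{ij} \le 2 \sum_i p_i a_{ii}^2 + \sum_{i \ne j} p_i p_j a_{ij}^2$, which is precisely of the form $N$. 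Substituting back into Step 2 and tracking the $(1 + \tfrac{1}{3} + C_5)$-type correction factors from Theorem~\ref{thm::Bernmgf} produces $\E \exp(\lambda S_{\diag}) \le \exp(\lambda^2 N)$, the constants $82$ and $108$ providing the numerical slack to absorb $C_0 \approx 39$ together with the correction factors.
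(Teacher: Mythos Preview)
Your argument is correct and follows the same overall architecture as the paper: the same decomposition $S_{\diag}=S_0+S_\star$ (your $Z_k$-sum and $T(\xi)$), the same conditional use of Lemma~\ref{lemma::expmgf2} to reach $\exp(C_0\lambda^2\,\xi^T B\xi)$ with $B=A_2=(D_0\circ D_0)^2$, Cauchy--Schwarz to separate the two pieces, Lemma~\ref{lemma::bern-sum} for the linear part, and Theorem~\ref{thm::Bernmgf} for the Bernoulli quadratic form. The one substantive difference is your Step~3. You apply Theorem~\ref{thm::Bernmgf} directly to $B=A_2$ and then translate the resulting $B$-expressions back to $a_{ij}^2$ via the Frobenius identity $\sum_{i,j}p_ip_jB_{ij}=\sum_k C_{kk}^2\le\fnorm{C}^2=\sum_{i,j}p_ip_ja_{ij}^2$; you correctly flag this as the crux because no entrywise bound $B_{ij}\le Ca_{ij}^2$ holds. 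The paper sidesteps this obstacle entirely by first invoking the pointwise inequality
\[
\xi^T A_2\,\xi \;=\; \fnorm{\diag(A_\xi)}^2 \;\le\; \fnorm{A_\xi}^2 \;=\; \xi^T(A_0\circ A_0)\,\xi,
\]
which is just ``sum of squared diagonal entries $\le$ Frobenius norm squared'', and only then applying Theorem~\ref{thm::Bernmgf} to $A_0\circ A_0$, whose entries are already $a_{ij}^2$. So what you identify as the main obstacle dissolves with a one-line monotonicity step; your route works but is doing more work than necessary. (A minor ordering difference: you condition on $\xi$ first and Cauchy--Schwarz afterwards, while the paper applies Cauchy--Schwarz to $S_0+S_\star$ up front; your ordering actually saves a harmless factor of~$2$ in the $C_0\lambda^2$ coefficient.)
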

To prove Lemma~\ref{lemma::diagmgfbound}, first we write $S_{\diag} = S_0 + S_{\star}$ where 
\ben
\label{eq::defineS0}
S_0 & := & \sum_{k=1}^m (X_k^2 -\E (X_k^2)) A_{\xi, kk} 
 = \sum_{k=1}^m (X_k^2 - \E (X_k^2)) 
(\sum_{\ell=1}^m d^2_{k \ell} \xi_{\ell} ),\\
\label{eq::defineS2}
S_{\star} & := & 
 \sum_{k=1}^m \E (X_k^2) A_{\xi, kk} - \E (X_k^2)\E(A_{\xi,  kk})
= \sum_{k=1}^m \E (X_k^2)
(\sum_{\ell=1}^m d^2_{k \ell} (\xi_{\ell} - \E\xi_{\ell} ))
\een
where recall 
\ben
\label{eq::defineD0}
A_{\xi} = D_0 D_{\xi} D_0, \; \text{ where} \; D_0 = [d_1,
\ldots, d_m].
\een
We now state the following bounds on  the moment generating functions
of $S_0$ and $S_{\star}$  in Lemmas~\ref{lemma::expmgfSd}
and~\ref{lemma::expmgfS0} respectively.  
The estimate on the moment generating function stated in Lemma~\ref{lemma::diagmgfbound} then follows 
immediately from the Cauchy-Schwartz inequality, 
in view of Lemmas~\ref{lemma::expmgfSd} and~\ref{lemma::expmgfS0}.
\begin{lemma}
\label{lemma::expmgfSd}
Let $a_{ii} = \twonorm{d_i}^2$ for $d_i$ as defined in \eqref{eq::defineD0}.
Let $a_{\infty} = \max_{i} \twonorm{d_i}^2$.
Then for $\abs{\lambda} < \inv{4 a_{\infty}}$,
\bens 
 \E \exp(\lambda S_{\star}) \le \exp\left(\half \lambda^2 e^{\abs{\lambda} a_{\infty}}\sum_{i=1}^m
  a_{ii}^2 \sigma^2_i \right) \; \;\text{ where } \; \; \E (X_k^2) \le \norm{X_k}_{\psi_2} = 1.
\eens
\end{lemma}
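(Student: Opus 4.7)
The plan is to recognize $S_{\star}$ as a weighted linear combination of centered Bernoulli random variables and then apply Lemma~\ref{lemma::bern-sum} directly. Starting from \eqref{eq::defineS2} and using the symmetry of $D_0 = [d_1,\ldots,d_m]$, the diagonal entries satisfy $A_{\xi,kk} = \sum_\ell d_{k\ell}^2 \xi_\ell$. Interchanging the order of summation then gives
\begin{equation*}
S_{\star} \;=\; \sum_{\ell=1}^m c_{\ell}\,(\xi_{\ell} - \E \xi_{\ell}), \qquad c_{\ell} \;:=\; \sum_{k=1}^m \E(X_k^2)\, d_{k\ell}^2 \;\ge\; 0.
\end{equation*}
Since we have normalized $\norm{X_k}_{\psi_2}\le K=1$, we have $\E X_k^2 \le 1$ (absorbing a constant that matches the convention written in the lemma statement), so
\begin{equation*}
0 \;\le\; c_{\ell} \;\le\; \sum_{k=1}^m d_{k\ell}^2 \;=\; \twonorm{d_\ell}^2 \;=\; a_{\ell\ell},
\end{equation*}
and in particular $\max_{\ell} c_{\ell} \le a_{\infty}$.

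Next I would apply Lemma~\ref{lemma::bern-sum} to the diagonal matrix $A' = \diag(c_1,\ldots,c_m)$. That lemma requires $|\lambda| \le 1/(4\max_{\ell} c_{\ell})$, which is implied by our assumption $|\lambda| < 1/(4 a_{\infty})$. Substituting yields
\begin{equation*}
\E \exp(\lambda S_{\star}) \;\le\; \exp\!\left(\tfrac{1}{2}\lambda^2 e^{|\lambda|\max_{\ell} c_{\ell}} \sum_{\ell=1}^m c_{\ell}^{\,2} \sigma_{\ell}^2\right) \;\le\; \exp\!\left(\tfrac{1}{2}\lambda^2 e^{|\lambda| a_{\infty}} \sum_{\ell=1}^m a_{\ell\ell}^{\,2} \sigma_{\ell}^2\right),
\end{equation*}
where the final inequality uses the termwise bound $c_{\ell}^2 \le a_{\ell\ell}^2$ (both quantities being nonnegative) together with $e^{|\lambda|\max_{\ell} c_{\ell}} \le e^{|\lambda| a_{\infty}}$. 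This is exactly the asserted inequality.

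No genuine obstacle is anticipated. The essence of the argument is that, although $S_{\star}$ initially appears as a nontrivial function of $\xi$ coming from the diagonal of $D_0 D_{\xi} D_0$, the dependence on $\xi$ is in fact linear, so a single interchange of sums reduces the problem to a centered Bernoulli sum to which Lemma~\ref{lemma::bern-sum} applies verbatim. The only book-keeping point is verifying that the weight coefficients $c_{\ell}$ are dominated (uniformly and termwise) by the diagonal entries $a_{\ell\ell}$ of $A_0 = D_0^2$, which in turn is what justifies using the global bound $a_{\infty}$ in the admissibility condition on $\lambda$.
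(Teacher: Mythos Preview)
Your proposal is correct and follows essentially the same approach as the paper: rewrite $S_{\star}$ as $\sum_{\ell} c_{\ell}(\xi_{\ell}-p_{\ell})$ with $c_{\ell}=\sum_k \E(X_k^2)d_{k\ell}^2$ (the paper calls these $a'_{\ell\ell}$), bound $c_{\ell}\le a_{\ell\ell}$ using $\E X_k^2\le 1$, and then invoke Lemma~\ref{lemma::bern-sum}. The only cosmetic difference is notation.
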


 \begin{proof}
We have by independence of $X$ and $\xi$ and by definition of
$S_{\star}$ in \eqref{eq::defineS2}
\bens
S_{\star}  
& = & \sum_{k=1}^m \E (X_k^2) \sum_{i=1}^m d_{ki}^2 (\xi_i - p_i) =  
\sum_{i=1}^m \left(\sum_{k=1}^m \E (X_k^2) d_{ki}^2\right) (\xi_i -p_i) =: \sum_{i=1}^m a'_{ii} (\xi_i - p_i).
\eens
where by assumption, we have $\E (X_k^2) \le \norm{X_k}_{\psi_2} \le K =1$ and hence
$$0 \le a'_{ii} := \sum_{k=1}^m
 \E (X_k^2) d_{ki}^2 \le a_{ii} \; \; 
\text{ and thus } \; \; \max_{i} \abs{a'_{ii}} \le a_{\infty}.$$
The bound on the mgf of $S_{\star}$ follows from
Lemma~\ref{lemma::bern-sum}. 
For $\abs{\lambda} < \inv{4 a_{\infty}}$, we have
\bens
\E \exp(\lambda S_{\star}) & = &  \E \exp(\lambda  \sum_{i=1}^m a'_{ii} (\xi_i - p_i))  \le  \exp\left(\half \lambda^2   e^{\abs{\lambda} a_{\infty}}\sum_{i=1}^m (a'_{ii})^2 \sigma^2_i \right) \\
&\le &  \exp\left(\half \lambda^2 e^{\abs{\lambda} a_{\infty}}\sum_{i=1}^m  a_{ii}^2 \sigma^2_i \right).
\eens
\end{proof}

\begin{lemma}
\label{lemma::expmgfS0}
Denote by $a_{ij} = \ip{d_i, d_j}$ for all $i \not=j$ and $a_{ii}=
\twonorm{d_i}^2$ for $d_i$ as defined in \eqref{eq::defineD0}. 
Denote by $a_{\infty} := \max_{i} a_{ii}$. Let $C_0 = 38.94$. 
Then for $\abs{\lambda} \le \inv{64 \twonorm{A_0}} \le  \inv{64  a_{\infty}}$,
\ben
\label{eq::final}
\E \exp(\lambda S_0) & \le &
 \exp\left(\lambda^2  (40 \sum_{j=1}^m  p_j a^2_{jj} + 54 \sum_{i\not=j} p_i p_j a_{ij}^2)\right).
\een
\end{lemma}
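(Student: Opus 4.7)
The strategy is to iterate expectations: first condition on $\xi$ to integrate out the $X_k$ using Lemma~\ref{lemma::expmgf2}, then integrate out $\xi$ using Theorem~\ref{thm::Bernmgf} applied to a quadratic form whose coefficient matrix is $(a_{ij}^2)$. Conditional on $\xi$, the weights $A_{\xi,kk} = \sum_\ell d_{k\ell}^2 \xi_\ell$ are deterministic and the variables $Z_k := X_k^2 - \E X_k^2$ are independent, mean-zero, and sub-exponential. Since $0 \le A_{\xi,kk} \le a_{kk} \le a_\infty \le \twonorm{A_0}$, the hypothesis $|\lambda| \le 1/(64\twonorm{A_0})$ gives $|\lambda A_{\xi,kk}| \le 1/64 < 1/(23.5\,e)$, so Lemma~\ref{lemma::expmgf2} applies to each factor with $\tau = \lambda A_{\xi,kk}$ and $K = 1$, and taking the product,
\begin{equation*}
\E_X\!\left[\exp(\lambda S_0)\mid \xi\right] \le \exp\!\Big(C_0 \lambda^2 \sum_{k=1}^m A_{\xi,kk}^2\Big).
\end{equation*}

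The pivotal step, which is where I expect the main obstacle to lie, is to express $\sum_k A_{\xi,kk}^2$ as a quadratic form in $\xi$ whose coefficient matrix is $(a_{ij}^2)$ rather than the looser matrix $c_{\ell\ell'} = \sum_k d_{k\ell}^2 d_{k\ell'}^2$ obtained by directly squaring $A_{\xi,kk}$: the matrix $(c_{\ell\ell'})$ admits no clean pointwise inequality against $(a_{\ell\ell'}^2)$, and proceeding that way would blow up the diagonal constant. Instead I would apply the elementary bound $M_{kk}^2 = \langle e_k, M e_k\rangle^2 \le \twonorm{M e_k}^2$ to $M = A_\xi = D_0 D_\xi D_0$, giving
\begin{equation*}
\sum_{k=1}^m A_{\xi,kk}^2 \le \tr(A_\xi^2) = \tr(D_\xi A_0 D_\xi A_0) = \xi^T (A_0 \circ A_0)\,\xi = \sum_{i,j} \xi_i \xi_j a_{ij}^2,
\end{equation*}
using cyclicity of trace, $D_\xi^2 = D_\xi$, and the Hadamard identity $\tr(D_\xi A D_\xi A^T) = \xi^T (A \circ A)\xi$ from the preliminaries.

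For the final step, I invoke Theorem~\ref{thm::Bernmgf} on $B := A_0 \circ A_0 = (a_{ij}^2)$ with parameter $\mu := C_0 \lambda^2$. Since $A_0 \succeq 0$, $\|B\|_1 = \|B\|_\infty = \max_i (A_0^2)_{ii} \le \twonorm{A_0}^2$, so admissibility $\mu \le 1/(104\twonorm{A_0}^2)$ reduces to $104\,C_0\,\lambda^2\,\twonorm{A_0}^2 \le 1$, which follows from $|\lambda| \le 1/(64\twonorm{A_0})$ because $104 \cdot 38.94 \approx 4050 < 4096 = 64^2$. Using $\sigma_i^2 \le p_i$ and $a_{ij}^2 \ge 0$ to consolidate the three exponential factors from Theorem~\ref{thm::Bernmgf}, the bound becomes
\begin{equation*}
\exp\!\Big(\mu\big[(1 + C_5/2) \sum_i a_{ii}^2 p_i + (4/3 + C_5) \sum_{i\ne j} a_{ij}^2 p_i p_j\big]\Big).
\end{equation*}
With $C_0 = 38.94$ and $C_5 \le 0.04$, $C_0(1 + C_5/2) \le 39.72 < 40$ and $C_0(4/3 + C_5) \le 53.48 < 54$, so the tower property $\E\exp(\lambda S_0) = \E_\xi\E_X[\exp(\lambda S_0)\mid\xi]$ delivers \eqref{eq::final}. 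The constants are tight in the sense that both admissibility thresholds $23.5\,e \approx 63.88$ and $\sqrt{104\cdot 38.94}\approx 63.64$ fit just under $64$, so the range $|\lambda| \le 1/(64\twonorm{A_0})$ is essentially the best this method allows.
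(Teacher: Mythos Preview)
Your proof is correct and follows essentially the same route as the paper's: condition on $\xi$ and apply Lemma~\ref{lemma::expmgf2} factor by factor, bound $\sum_k A_{\xi,kk}^2 = \fnorm{\diag(A_\xi)}^2 \le \fnorm{A_\xi}^2 = \xi^T(A_0\circ A_0)\xi$, then invoke Theorem~\ref{thm::Bernmgf} on $B=A_0\circ A_0$ with $\mu=C_0\lambda^2$ (the paper packages this last step as Corollary~\ref{coro::actsharp}). Your constant tracking and admissibility checks match the paper's, and your use of $M_{kk}^2\le\twonorm{Me_k}^2$ is just a pointwise restatement of the paper's $\fnorm{\diag(A_\xi)}^2\le\fnorm{A_\xi}^2$.
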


\begin{lemma}
\label{lemma::expmgfS1}
Let $A_0 = (a_{ij}) =D_0^2$. 
For all $\abs{\lambda} \le \inv{58 C \twonorm{A_0}}$ for 
some constant $C$
\bens
\E \exp(\lambda S_{\off})
& \le & 
\E \exp(\lambda^2 C_2 \xi^T (A_0 \circ A_0) \xi) \le 
\exp\big(\lambda^2 M \big) \\
\E \exp(-\lambda S_{\off}) & \le & \exp\big(\lambda^2 M \big)
\eens
where $C_2 = 32 C^2$ and $M = 11C^2 (3\sum_{i=1}^m p_i a^2_{ii} + 4
\sum_{i\not=j} a^2_{ij} p_i p_j)$.
\end{lemma}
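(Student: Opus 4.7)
The plan is to bound the mgf of $S_{\off}$ in two stages via the tower property: first integrate over the subgaussian vector $X$ with $\xi$ fixed, then integrate over the Bernoulli vector $\xi$. Conditioning on $\xi$, I view $S_{\off} = \sum_{i\neq j} (A_\xi)_{ij} X_i X_j$ as a centered off-diagonal quadratic form in the independent $K$-subgaussian components $X_i$ with deterministic coefficients $(A_\xi)_{ij}$. The off-diagonal mgf estimate \eqref{eq::mgfboundS} from the proof of Theorem~\ref{thm::HWsparse}, specialized to $p_i \equiv 1$ (equivalently, the classical Hanson--Wright off-diagonal mgf bound for subgaussian $X$), gives for every $\xi$ and every $\abs{\lambda} \le 1/(2\sqrt{C_4}\twonorm{A_\xi})$,
\[
\E_X \exp(\lambda S_{\off}\mid\xi) \;\le\; \exp\!\big(C \lambda^2 \fnorm{\offd(A_\xi)}^2\big) \;\le\; \exp\!\big(C \lambda^2 \fnorm{A_\xi}^2\big).
\]
Because $\twonorm{A_\xi} = \twonorm{D_0 D_\xi D_0} \le \twonorm{D_0}^2 = \twonorm{A_0}$, the restriction $\abs{\lambda} \le c/\twonorm{A_0}$ is enough. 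Invoking the Hadamard identity stated in the preliminaries,
\[
\fnorm{A_\xi}^2 \;=\; \tr(A_\xi^2) \;=\; \tr(D_\xi A_0 D_\xi A_0) \;=\; \xi^T (A_0 \circ A_0)\xi,
\]
and taking the expectation over $\xi$ yields the first inequality of the lemma with $C_2 = 32 C^2$.

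For the second stage I apply Theorem~\ref{thm::Bernmgf} to the entrywise nonnegative matrix $B = A_0 \circ A_0 = (a_{ij}^2)$ with scalar $\mu := \lambda^2 C_2$. Because $A_0 \succeq 0$, each row sum of $B$ equals $(A_0^2)_{ii} \le \twonorm{A_0^2} = \twonorm{A_0}^2$, so $\max(\norm{B}_1,\norm{B}_{\infty}) \le \twonorm{A_0}^2$; matching this against the validity window $\mu \le 1/(104\twonorm{A_0}^2)$ and solving for $\lambda$ with $C_2 = 32C^2$ gives the stated range $\abs{\lambda} \le 1/(58 C \twonorm{A_0})$. Substituting $b_{ij} = a_{ij}^2$ into the three exponential factors of Theorem~\ref{thm::Bernmgf} and using $\sigma_i^2\sigma_j^2 \le p_i p_j$, the product collapses to
\[
\exp\!\Big(\mu\big[(1+\tfrac{1}{2}C_5)\textstyle\sum_{i} p_i a_{ii}^2 \,+\, (1+ \tfrac{1}{3}+C_5)\textstyle\sum_{i\neq j} p_i p_j a_{ij}^2\big]\Big).
\]
With $C_5 \le 0.04$ the bracketed coefficients are at most $33/32$ and $11/8$ respectively, so the whole expression is at most $\exp(\lambda^2 M)$ with $M = 11 C^2 (3 \sum_i p_i a_{ii}^2 + 4 \sum_{i\neq j} p_i p_j a_{ij}^2)$, as claimed.

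The bound for $-\lambda$ is automatic: the first-stage estimate depends only on $\lambda^2$, so it controls $\E_X \exp(\pm\lambda S_{\off}\mid\xi)$ by the same quantity, after which the Bernoulli integration is identical. The main technical obstacle is bookkeeping constants so that the validity range remains a clean $1/(O(1)\twonorm{A_0})$ and so that the two subleading contributions from Theorem~\ref{thm::Bernmgf} (the $\sigma_i^2 \sigma_j^2$ term and the $C_5$ correction) get absorbed into the dominant expression without inflating the coefficients beyond $3$ and $4$. This hinges specifically on the PSD observation $\norm{A_0\circ A_0}_{\infty} \le \twonorm{A_0}^2$, which is what converts the naive bound $\mu \le 1/(104 \max_i \sum_j a_{ij}^2)$ into a bound involving only $\twonorm{A_0}$.
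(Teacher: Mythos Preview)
Your proposal is correct and follows essentially the same route as the paper: the paper first conditions on $\xi$ and invokes the classical off-diagonal Hanson--Wright mgf estimate (stated there as Lemma~\ref{lemma::RRbound}) to get $\E_X\exp(\lambda S_{\off})\le \exp(C_2\lambda^2\,\xi^T(A_0\circ A_0)\xi)$, then takes the expectation in $\xi$ and applies Theorem~\ref{thm::Bernmgf} to $B=A_0\circ A_0$ (packaged as Corollary~\ref{coro::action}) using exactly your observation $\norm{A_0\circ A_0}_{\infty}=\max_i\sum_j a_{ij}^2\le\twonorm{A_0}^2$ to match the range $|\lambda|\le 1/(58C\twonorm{A_0})$. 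Your constant bookkeeping ($1+\tfrac{1}{2}C_5\le 33/32$, $1+\tfrac{1}{3}+C_5\le 44/32$, hence $M=11C^2(3\sum_i p_i a_{ii}^2+4\sum_{i\neq j}p_ip_j a_{ij}^2)$) reproduces precisely the paper's Corollary~\ref{coro::action}.
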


We defer the proof of Lemma~\ref{lemma::expmgfS1} to
Section~\ref{sec::mgfSoffd} and Lemma~\ref{lemma::expmgfS0} to Section~\ref{sec::mgfS0}.
We are now ready to state the large deviation inequalities for the diagonal sum
$S_{\diag}$, followed by that for the off-diagonal sum $S_{\off}$.
\begin{lemma}
\label{lemma::expmgfS2-devi}
Let $A_0 = (a_{ij}) = D_0^2$.  
For all $t > 0$ and $N$ as defined in \eqref{eq::defineN},
\bens
\prob{\abs{S_{\diag}} > t/2} & \le &
2\exp \left(-\inv{16}\min\left(\frac{t^2}{N}, \frac{t}{32 \twonorm{A_0}} \right)\right).
\eens
\end{lemma}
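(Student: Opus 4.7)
The plan is a standard Chernoff-style deviation argument applied to the two-sided moment generating function bound already furnished by Lemma~\ref{lemma::diagmgfbound}. Since that lemma provides the \emph{same} sub-Gaussian-type bound $\E \exp(\pm \lambda S_{\diag}) \le \exp(\lambda^2 N)$ over the symmetric interval $|\lambda| \le 1/(128\twonorm{A_0})$, it will suffice to prove the one-sided inequality $\prob{S_{\diag} > t/2} \le \exp(-\tfrac{1}{16}\min(t^2/N,\,t/(32\twonorm{A_0})))$ and then apply the identical argument to $-S_{\diag}$, followed by a union bound to obtain the factor of $2$ in the stated conclusion.

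For the one-sided bound, first I would invoke Markov's inequality in the form
\[
\prob{S_{\diag} > t/2} \le e^{-\lambda t/2}\,\E \exp(\lambda S_{\diag}) \le \exp\bigl(-\lambda t/2 + \lambda^2 N\bigr),
\]
valid for every $\lambda \in [0,\,1/(128\twonorm{A_0})]$. The next step is to optimize the exponent over this constrained interval. The unconstrained minimizer of $\lambda^2 N - \lambda t/2$ is $\lambda^\star = t/(4N)$, yielding the exponent $-t^2/(16 N)$; this choice is admissible exactly when $t \le N/(32\twonorm{A_0})$.

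In the complementary regime $t > N/(32\twonorm{A_0})$, I would set $\lambda = 1/(128\twonorm{A_0})$, the right endpoint of the admissible interval. A direct substitution shows
\[
-\lambda t/2 + \lambda^2 N \;=\; -\frac{t}{256\twonorm{A_0}} + \frac{N}{(128\twonorm{A_0})^2},
\]
and the hypothesis $N/(32\twonorm{A_0}) < t$ yields $N/(128\twonorm{A_0})^2 \le t/(512\twonorm{A_0})$, so the exponent is bounded above by $-t/(512\twonorm{A_0}) = -\tfrac{1}{16}\cdot t/(32\twonorm{A_0})$. The two regimes combine cleanly into $\exp(-\tfrac{1}{16}\min(t^2/N,\,t/(32\twonorm{A_0})))$.

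Finally, I would repeat the argument verbatim for $-S_{\diag}$, using the symmetric mgf bound in Lemma~\ref{lemma::diagmgfbound}, and add the two tail probabilities to obtain the stated factor of $2$. There is no real obstacle here beyond bookkeeping the absolute constants so that the split between the sub-Gaussian regime ($t^2/N$) and the sub-exponential regime ($t/\twonorm{A_0}$) is governed exactly by the crossover $t = N/(32\twonorm{A_0})$ dictated by the radius of validity of the mgf estimate; the constants $1/16$ and $1/32$ in the statement of the lemma are a direct consequence of the choice $\lambda \le 1/(128\twonorm{A_0})$ in Lemma~\ref{lemma::diagmgfbound}.
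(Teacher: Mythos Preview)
Your proposal is correct and follows essentially the same approach as the paper: a Chernoff/Markov bound using the mgf estimate of Lemma~\ref{lemma::diagmgfbound}, optimization over $\lambda$ with the unconstrained minimizer $\lambda = t/(4N)$ and the boundary choice $\lambda = 1/(128\twonorm{A_0})$ in the complementary regime, then symmetry and a union bound. Your treatment of the sub-exponential regime is in fact more explicit than the paper's, which simply records the outcome of the optimization without spelling out the substitution at the endpoint.
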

For the off-diagonal sum, we now state the following large deviation bound as 
in Lemma~\ref{lemma::expmgfS1-devi}. 
\begin{lemma}
\label{lemma::expmgfS1-devi}
Suppose all conditions in Lemma~\ref{lemma::expmgfS1} hold.
For all $t >0$, and some large enough absolute constant $C$,
\bens
\prob{\abs{S_{\off}} > t/2} \le 
2\exp \left(-\inv{16}\min\left(\frac{t^2}{M}, 
\frac{t}{15 C \twonorm{A_0}} \right)\right)
\eens
where 
$M =  11C^2 (3\sum_{i=1}^m p_i a^2_{ii} + 4 \sum_{i\not=j} a^2_{ij} p_i p_j)$.
\end{lemma}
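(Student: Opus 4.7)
The plan is to apply the standard Chernoff-style argument to the moment generating function estimates already proved in Lemma~\ref{lemma::expmgfS1}. More precisely, for every $\lambda \in [0, 1/(58 C \twonorm{A_0})]$, Markov's inequality together with the upper MGF bound gives
\begin{equation*}
\prob{S_{\off} > t/2} \le e^{-\lambda t/2}\, \E \exp(\lambda S_{\off}) \le \exp\left(-\lambda t/2 + \lambda^2 M\right).
\end{equation*}
So the first step is simply to write this inequality and then minimize the right-hand side over the admissible range of $\lambda$.

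Next I would optimize explicitly. The unconstrained minimizer is $\lambda^* = t/(4M)$. If $\lambda^* \le 1/(58 C \twonorm{A_0})$, i.e.\ $t \le 4M/(58C\twonorm{A_0})$, we choose $\lambda = t/(4M)$ and get
\begin{equation*}
\prob{S_{\off} > t/2} \le \exp\left(-\frac{t^2}{16 M}\right).
\end{equation*}
Otherwise we set $\lambda = 1/(58 C \twonorm{A_0})$; in this regime $t > 4M/(58 C \twonorm{A_0})$ implies $\lambda^2 M = \lambda M/(58 C \twonorm{A_0}) \le \lambda t/4$, so
\begin{equation*}
\prob{S_{\off} > t/2} \le \exp\left(-\lambda t/2 + \lambda t /4\right) = \exp\left(-\frac{t}{4\cdot 58\, C \twonorm{A_0}}\right),
\end{equation*}
which after trivial constant adjustments is at most $\exp\left(-\frac{t}{16\cdot 15\, C \twonorm{A_0}}\right)$. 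Combining the two regimes yields the one-sided bound $\exp\left(-\tfrac{1}{16}\min(t^2/M,\, t/(15 C \twonorm{A_0}))\right)$.

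For the lower tail I would repeat the same argument with $-S_{\off}$ in place of $S_{\off}$, using the second inequality in Lemma~\ref{lemma::expmgfS1}, namely $\E \exp(-\lambda S_{\off}) \le \exp(\lambda^2 M)$, which yields the identical bound on $\prob{S_{\off} < -t/2}$. A union bound on the two tail events produces the factor of $2$ in the conclusion. The one genuinely non-routine point is verifying that the claimed constants $1/16$ and $1/(15 C \twonorm{A_0})$ are compatible with the MGF radius $1/(58 C \twonorm{A_0})$ from Lemma~\ref{lemma::expmgfS1}; this is pure constant-tracking, and since the hypothesis allows ``some large enough absolute constant $C$,'' no obstacle arises. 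In short, the entire proof is a two-line Chernoff bound plus optimization, with all the real work having been done in Lemma~\ref{lemma::expmgfS1}.
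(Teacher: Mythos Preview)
Your proposal is correct and follows essentially the same approach as the paper: apply Markov's inequality together with the MGF bound from Lemma~\ref{lemma::expmgfS1}, optimize over $\lambda \in [0, 1/(58C\twonorm{A_0})]$ with the unconstrained minimizer $\lambda = t/(4M)$, and repeat for $-S_{\off}$ before taking a union bound. If anything, your explicit two-regime case analysis is more detailed than the paper's version, which simply states the optimal $\lambda$ and the resulting bound.
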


The Theorem is thus proved by summing up the two bad events:
\bens
q = \prob{\abs{S_{\diag} + S_{\off}} >t}
& \le & 
\prob{\abs{S_{\off}} > t/2} +
\prob{\abs{S_{\diag}} > t/2}
\eens
while adjusting the constant $c$
in~\eqref{eq::q1}.

It remains to prove Lemmas~~\ref{lemma::diagmgfbound}, \ref{lemma::expmgfS2-devi} and~\ref{lemma::expmgfS1-devi}.

\begin{proofof}{Lemma~\ref{lemma::diagmgfbound}}
Suppose that $\abs{\lambda} \le \inv{128\twonorm{A_0}}$.
By Lemmas~\ref{lemma::expmgfS0} and~\ref{lemma::expmgfSd}, 
\bens
\E^{1/2} \exp(2 \lambda S_{\star}) & \le &
\exp\left(\lambda^2 e^{2 \abs{\lambda} a_{\infty}}\sum_{j=1}^m \sigma^2_j
  a_{jj}^2\right) \\
\E^{1/2} \exp(2\lambda S_0) & \le &
 \exp\big(80\lambda^2 \sum_{j=1}^m  p_j a^2_{jj}\big)
\exp \left(108 \lambda^2 \sum_{i\not=j} a_{ij}^2 p_i p_j\right).
\eens
Now we have by the Cauchy-Schwartz inequality,
\bens
\lefteqn{
\E \exp\left(\lambda S_{\diag} \right) =
\E \exp\left(\lambda (S_{0} + S_{\star})\right)
 \le \E^{1/2}\exp(2 \lambda  S_0) \E^{1/2}\exp(2\lambda  S_{\star}) }\\
 &\le &  \exp\big(82 \lambda^2 \sum_{j=1}^m  \sigma^2_j a^2_{jj}\big)
\exp \left(108  \lambda^2 \sum_{i\not=j} a_{ij}^2 p_i p_j\right).
\eens 
\end{proofof}

\begin{proofof}{Lemma~\ref{lemma::expmgfS2-devi}}
Lemma~\ref{lemma::expmgfS2-devi} follows from
Lemma~\ref{lemma::diagmgfbound} immediately.
Let $\E_{X}$ and $\E_{\xi}$ denote the expectation with respect to random variables in vectors $X$  and $\xi$ respectively.

First, by the Markov's inequality, we have for 
$0 < \lambda \le \inv{128 \twonorm{A_0} }$
\bens
\prob{S_{\diag} > t/2} & = & \prob{\lambda S_{\diag} > \lambda t/2}
=  \prob{\exp(\lambda S_{\diag}) > \exp(\lambda t/2)} \\
& \le & 
 \frac{\E \exp(\lambda S_{\diag})}{e^{\lambda t/2}}
\le\exp(-\lambda t/2 + N \lambda^2 )
\eens
Optimizing over $\lambda$,  for which the optimal choice of $\lambda$ is $\lambda = \frac{t}{4 N}$. 
Thus, we have  for $t > 0$, 
\bens
\prob{S_{\diag} > t/2} & \le &
\exp \left(-\min\left(\frac{t^2}{16 N}, \frac{t}{4 *128 \twonorm{A_0}}
  \right)\right) \\
& \le &
\exp \left(-\inv{16}\min\left(\frac{t^2}{N}, \frac{t}{32 \twonorm{A_0}} \right)\right) 
=: q_d
\eens
Repeating the argument for $-A_{\xi}$ instead of $A_{\xi}$, we now consider
\bens
S'_{\diag} & := & \sum_{k=1}^m (X_k^2  (-A_{\xi, kk}) + \E (X_k^2) \E  (A_{\xi, kk}) 
= -S_{\diag}.
\eens
By Lemma~\ref{lemma::diagmgfbound}, we have for all $\abs{\lambda} \le \inv{128 \twonorm{A_0} }$
$$\E \exp(\lambda S'_{\diag}) =\E \exp(-\lambda S_{\diag}) \le  \exp\big(\lambda^2 N \big).$$
Thus, we have for $t > 0$ and $0 < \lambda \le \inv{128 \twonorm{A_0}}$,
\bens
\prob{S'_{\diag} > t/2}
& \le & 
 \frac{\E \exp(\lambda S'_{\diag})}{e^{\lambda t/2}}
\le\exp(-\lambda t/2 + N \lambda^2 ) \le q_d
\eens
The lemma is thus proved, given that for $t > 0$
\bens
\prob{S_{\diag} < -t/2} & = &\prob{S'_{\diag} > t/2} \le q_d \\
\prob{\abs{S_{\diag}} > t/2} & = & \prob{S_{\diag} > t/2} + 
\prob{S_{\diag} <- t/2}  \le 2q_d.
\eens
\end{proofof}

\begin{proofof}{Lemma~\ref{lemma::expmgfS1-devi}}
Lemma~\ref{lemma::expmgfS1-devi} follows immediately from Lemma~\ref{lemma::expmgfS1}.
We have for $0<\lambda \le \inv{58 C \twonorm{A_0}}$ and $S:= S_{\off}$,
\bens
\prob{S > t/2} = \prob{\exp(\lambda S) >\exp(\lambda t/2)} 
\le \frac{\E \exp(\lambda S)}{e^{\lambda t/2}}
\le \exp(-\lambda t/2 + M \lambda^2 )
\eens
for which the optimal choice of $\lambda$ is $\lambda = \frac{t}{4 M}$.
Thus we have for $t>0$,
\bens
\prob{S > t/2}  &\le & \exp(-\lambda t/2 + M \lambda^2 ) \\
&\le & 
 \exp \left(-\inv{16}\min\left(\frac{t^2}{M},\frac{t}{15 C  \twonorm{A_0}} \right)\right) =: q_{\offd}
\eens
Similarly, we have for $\lambda, t>0$,
\bens
\prob{S < -t/2} 
& = & \prob{-S > t/2} =
\prob{\exp(\lambda(- S)) >\exp(\lambda t/2)} \\ 
& \le &
\frac{\E \exp(\lambda(- S))}{e^{\lambda t/2}} \le
\exp(-\lambda t/2 + M \lambda^2 ) \le q_{\offd}.
\eens
The lemma is thus proved using the union bound.
\end{proofof}
The Theorem is thus proved.
\end{proofof2}

The plan is to first bound the moment  generating function for the
 $S_0$ in the diagonal sum in Section~\ref{sec::mgfS0}.
We then bound the moment  generating function for the
off-diagonal sum as stated in Lemma~\ref{lemma::expmgfS1} in Section~\ref{sec::mgfSoffd}.

\subsection{Proof of Lemma~\ref{lemma::expmgfS0}}
\label{sec::mgfS0}
\begin{proofof2}
Recall $A_{\xi} = D_0 D_{\xi} D_0 = (\tilde{a}_{ij}) = (d_i^T D_{\xi} d_j)$.
Then for $\tilde{a}_{kk} =d_k^T D_{\xi} d_k = \sum_{i=1}^m d_{ki}^2 \xi_i$
\bens
S_0 & := & 
\sum_{k=1}^m (X_k^2 -\E X_k^2) A_{\xi, kk} = \sum_{k=1}^m (X_k^2 -\E X_k^2) \tilde{a}_{kk}
\eens
To estimate the moment generating function of $S_0$, we first 
consider $\xi$ as being fixed and thus treat $\tilde{a}_{ij}$ as fixed
coefficients.  The bound on  the moment generating function of $S_0$ as in
\eqref{eq::defineS0} 
will involve  the following symmetric matrices $A_1$ and $A_2$ which
we now define:
\ben
\nonumber
\label{eq::eigendec1}
A_1 & := & D_0 \circ  D_0 = [d_1 \circ d_1, \ldots, d_m \circ d_m], \\
\label{eq::defineA2}
A_2  =(a''_{ij}) & = &  A_1^2 = \left(d_1 \circ d_1, \ldots, d_{m} \circ d_m\right)\left(d_1
    \circ d_1, \ldots, d_{m} \circ d_m\right)^T \\
\nonumber
 & = &  \sum_{k=1}^m (d_k d_{k}^T)\circ (d_k d_{k}^T )
=\sum_{k=1}^m (d_k \circ d_k) ( d_{k} \circ d_k)^T 
\succeq 0.
\een
Thus we have both $A_0, A_2$
being positive semidefinite, while in general $A_1$ is not positive
semidefinite unless $D_0 \succeq 0$ by the Schur Product Theorem. See
Theorem 5.2.1~\cite{HJ91}. 
\begin{lemma}
\label{lemma::expmgfS0local}
Suppose all conditions in Lemma~\ref{lemma::expmgfS0} hold.
Let $C_0 = 38.94$. 
Then for $\abs{\lambda} \le \inv{64 \twonorm{A_0}} \le  \inv{64  a_{\infty}}$,
\ben
\label{eq::tightS0}
\E \exp(\lambda S_0) & \le &   
\E \exp(C_0 \lambda^2 \xi^T  A_2 \xi) \le   \E \exp(C_0 \lambda^2 \fnorm{\diag(A_{\xi})}^2) .
\een
\end{lemma}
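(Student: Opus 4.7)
The plan is to condition on $\xi$ and exploit the independence of the $X_k^2 - \E X_k^2$ across $k$, reducing the problem to $m$ one-dimensional applications of Lemma~\ref{lemma::expmgf2}. First, fix $\xi$ and treat $\tilde{a}_{kk} = d_k^T D_\xi d_k = \sum_i d_{ki}^2 \xi_i$ as deterministic coefficients. Writing $S_0 = \sum_{k=1}^m \tilde{a}_{kk}(X_k^2 - \E X_k^2)$ and using independence of the $X_k$'s,
\begin{equation*}
\E_X \exp(\lambda S_0) = \prod_{k=1}^m \E \exp\bigl(\lambda \tilde{a}_{kk}(X_k^2 - \E X_k^2)\bigr).
\end{equation*}

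Next, I would verify that the range $|\lambda| \le \tfrac{1}{64\twonorm{A_0}}$ is sufficient to apply Lemma~\ref{lemma::expmgf2} with $\tau = \lambda \tilde{a}_{kk}$ (recall $K=1$ by the WLOG reduction in the proof of Theorem~\ref{thm::MNsparse}). Since $0 \le \tilde{a}_{kk} \le a_{kk} = \twonorm{d_k}^2 \le a_\infty \le \twonorm{A_0}$, we have $|\lambda \tilde{a}_{kk}| \le \tfrac{1}{64} < \tfrac{1}{23.5\,e}$, so Lemma~\ref{lemma::expmgf2} applies with $C_0 = 38.94$ and gives
\begin{equation*}
\E \exp\bigl(\lambda \tilde{a}_{kk}(X_k^2 - \E X_k^2)\bigr) \le \exp(C_0 \lambda^2 \tilde{a}_{kk}^2).
\end{equation*}
Multiplying over $k$ and taking expectation over $\xi$ yields
\begin{equation*}
\E \exp(\lambda S_0) \le \E \exp\Bigl(C_0 \lambda^2 \sum_{k=1}^m \tilde{a}_{kk}^2\Bigr).
\end{equation*}

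Finally, I identify $\sum_k \tilde{a}_{kk}^2$ with both quantities on the right of \eqref{eq::tightS0}. Expanding,
\begin{equation*}
\sum_{k=1}^m \tilde{a}_{kk}^2 = \sum_{k=1}^m \Bigl(\sum_{i} d_{ki}^2 \xi_i\Bigr)^2 = \sum_{i,j} \xi_i \xi_j \sum_{k=1}^m d_{ki}^2 d_{kj}^2 = \xi^T A_2 \xi,
\end{equation*}
where the last equality uses $(A_2)_{ij} = (A_1 A_1^T)_{ij} = \sum_k d_{ki}^2 d_{kj}^2$ from the definition \eqref{eq::defineA2}. Simultaneously, $\sum_k \tilde{a}_{kk}^2 = \fnorm{\diag(A_\xi)}^2$ by definition of the Frobenius norm of the diagonal of $A_\xi$, giving the second bound in \eqref{eq::tightS0} as an equality.

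The only mild obstacle is bookkeeping the range of validity: one must check that the worst-case $\tilde{a}_{kk}$, which is bounded above by $a_\infty = \max_i \twonorm{d_i}^2$, is controlled by $\twonorm{A_0}$ (using $a_\infty \le \twonorm{A_0}$ since $A_0 \succeq 0$), so that a single uniform choice $|\lambda| \le \tfrac{1}{64\twonorm{A_0}}$ certifies the per-coordinate hypothesis of Lemma~\ref{lemma::expmgf2} for \emph{every} realization of $\xi$. Once this is verified, the proof is a three-line combination of conditional independence, Lemma~\ref{lemma::expmgf2}, and the algebraic identity above.
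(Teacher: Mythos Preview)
Your proposal is correct and follows essentially the same route as the paper: condition on $\xi$, use independence of the $X_k$ to factor the mgf, apply Lemma~\ref{lemma::expmgf2} coordinatewise after verifying $|\lambda\tilde a_{kk}|\le 1/64\le 1/(23.5e)$, and then identify $\sum_k \tilde a_{kk}^2=\xi^T A_2\xi=\fnorm{\diag(A_\xi)}^2$. The only cosmetic difference is that the paper obtains the identity $\sum_k \tilde a_{kk}^2=\xi^T A_2\xi$ via the trace/Hadamard relation $\tr(D_\xi d_kd_k^T D_\xi d_kd_k^T)=\xi^T((d_kd_k^T)\circ(d_kd_k^T))\xi$, whereas you expand the square directly; both are equivalent, and your observation that the second inequality in \eqref{eq::tightS0} is in fact an equality is correct.
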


\begin{proof}
We first compute the moment generating function for $S_0$ when
$\xi$ is fixed.
Conditioned on $\xi$, $\tilde{a}_{kk}, \forall k$ are considered as
fixed coefficients. 
Indeed, for $\abs{\lambda} \le \inv{64 a_{\infty}}$, by independence of $X_i$
\bens
\E \left(\exp(\lambda S_0) | \xi\right)
& = & \E_X \exp\left(\lambda \sum_{k=1}^m  \tilde{a}_{kk}  (X_k^2 - \E X_k^2)\right) = 
\prod_{k=1}^m \E_X \exp(\lambda \tilde{a}_{kk} ( X_k^2 -\E X_k^2)) \\
& \le &  
\prod_{k=1}^m \exp\left(38.94 \lambda^2 \tilde{a}_{kk}^2
\right) =  \exp\big(C_0 \lambda^2\sum_{k=1}^m \tilde{a}_{kk}^2\big)
\eens
where the inequality follows from Lemma~\ref{lemma::expmgf2} with
$\tau := \lambda \tilde{a}_{kk}$ in view of \eqref{eq::lamb}:
\ben
\label{eq::lamb}
\forall k, \; \forall \xi,\;\;
\abs{\lambda  \tilde{a}_{kk}} \le \inv{64}
\le \inv{23.5 e} \text{ where} \; \; 
\abs{\tilde{a}_{kk}} \le \ip{d_k, d_k} = a_{kk} \le a_{\infty}
\een
Now
\bens
\sum_{k=1}^m \tilde{a}_{kk}^2
& = & \sum_{k=1}^m (d_{k}^T D_{\xi} d_k) ^2 = \sum_{k=1}^m \tr(d_{k}^T D_{\xi} d_k d_{k}^T D_{\xi} d_k)  \\
& = & \sum_{k=1}^m \tr(D_{\xi} d_k d_{k}^T D_{\xi} d_k d_{k}^T )  
=  \sum_{k=1}^m \xi^T( (d_k d_{k}^T)\circ d_k d_{k}^T ) \xi 
 =: \xi^T A_2 \xi
\eens
where $A_2 =(a_{ij}'') = (D_0 \circ D_0)^2$ is as defined in~\eqref{eq::defineA2}.
Thus
\ben
\label{eq::antistar}
\E_X \exp(\lambda S_0) & \le &   \exp(C_0 \lambda^2 \xi^T  A_2 \xi) =  
\exp\big(C_0  \lambda^2  \fnorm{\diag(A_{\xi})}^2\big)
\een
and \eqref{eq::tightS0}  is thus proved by taking
expectation on both sides of \eqref{eq::antistar} with respect to 
random variables in vector $\xi$.
\end{proof}
To prove~\eqref{eq::final}  in the Lemma statement, 
notice that for all $\xi \in \{0, 1\}^m$,
\bens
\sum_{k=1}^m \tilde{a}_{kk}^2 = \fnorm{\diag(A_{\xi})}^2 \le 
\fnorm{A_{\xi}}^2.
\eens
Thus we have for  $\abs{\lambda} \le \inv{64 \twonorm{A_0}}$,
\bens
\E \exp(\lambda S_0)
&  \le & 
\E \exp(C_0\lambda^2 \fnorm{\diag(A_{\xi})}^2) \\
&  \le & 
\E \exp(C_0\lambda^2 \fnorm{A_{\xi}}^2) =\E \exp(C_0 \lambda^2 \xi^T (A_0 \circ A_0) \xi)
\eens 
where $A_0 = (a_{ij})$. 
Finally, we invoke Corollary~\ref{coro::actsharp} to finish the proof
of Lemma~\ref{lemma::expmgfS0}.
\begin{corollary}
\label{coro::actsharp}
Let $A_0, \xi$ be as defined in Theorem~\ref{thm::MNsparse}.
Then for $\abs{\lambda} \le  \inv{64 \twonorm{A_0}}$ and $C_0 \le  38.94$
\bens
\E \exp\left(C_0 \lambda^2 \sum_{i,j} a^2_{ij} \xi_i \xi_j\right) 
& \le &  \exp\big(\lambda^2 N \big)
\eens
where $N=  \big(40 \sum_{j=1}^m  p_j a^2_{jj} + 54 \sum_{i\not=j} p_i p_j a_{ij}^2\big)\big)$.
\end{corollary}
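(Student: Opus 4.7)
The plan is to apply Theorem~\ref{thm::Bernmgf} to the quadratic form $\xi^T (A_0 \circ A_0) \xi = \sum_{i,j} a_{ij}^2 \xi_i \xi_j$ with the ``outer'' parameter taken to be $\lambda' := C_0 \lambda^2$. First I would verify the admissibility range. Writing $B = A_0 \circ A_0 = (a_{ij}^2)$, I note that since $A_0$ is symmetric, the $i$-th row sum of $B$ is $\sum_j a_{ij}^2 = (A_0^2)_{ii} \le \twonorm{A_0^2} = \twonorm{A_0}^2$, so $\max(\|B\|_1,\|B\|_\infty) \le \twonorm{A_0}^2$. Consequently, for $|\lambda| \le 1/(64\twonorm{A_0})$ one has $C_0 \lambda^2 \le 38.94/(64^2 \twonorm{A_0}^2) < 1/(104\twonorm{A_0}^2)$, so $\lambda' = C_0\lambda^2$ lies in the interval where Theorem~\ref{thm::Bernmgf} may be invoked.

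Next, I would substitute $a_{ij} \to a_{ij}^2$ (all nonnegative, so the absolute values in Theorem~\ref{thm::Bernmgf} are harmless) and $\lambda \to C_0\lambda^2$ to obtain three exponential factors. The first factor contributes $\exp\bigl(C_0\lambda^2(\sum_i p_i a_{ii}^2 + \sum_{i\ne j} p_ip_j a_{ij}^2)\bigr)$. Using the elementary bound $\sigma_i^2\sigma_j^2 = p_i(1-p_i)p_j(1-p_j) \le p_i p_j$, the second factor is dominated by $\exp\bigl(\tfrac{1}{3}C_0\lambda^2 \sum_{i\ne j} p_ip_j a_{ij}^2\bigr)$. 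The third factor (with $C_5 \le 0.04$, and reading the last term as $p_ip_j$) is dominated by $\exp\bigl(C_5 C_0 \lambda^2(\tfrac12\sum_i p_i a_{ii}^2 + \sum_{i\ne j}p_ip_j a_{ij}^2)\bigr)$.

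It then remains to add the three exponents and check the constants. The coefficient of $\sum_j p_j a_{jj}^2$ is at most $C_0 + \tfrac12 C_5 C_0 \le 38.94 + 0.04\cdot 38.94 /2 < 40$, and the coefficient of $\sum_{i\ne j} p_ip_j a_{ij}^2$ is at most $C_0(1+\tfrac13+C_5) \le 38.94 \cdot 1.3734 < 54$. Combining yields $\E\exp(C_0\lambda^2 \xi^T(A_0\circ A_0)\xi) \le \exp(\lambda^2 N)$ with $N = 40\sum_j p_j a_{jj}^2 + 54\sum_{i\ne j} p_ip_j a_{ij}^2$, as claimed.

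The step I expect to require the most care is not conceptual but arithmetic: tracking the three constant contributions and ensuring that the admissibility of $C_0\lambda^2$ in Theorem~\ref{thm::Bernmgf} is compatible with the declared range $|\lambda|\le 1/(64\twonorm{A_0})$. No decoupling or Gaussian comparison is needed here, because the heavy lifting (handling quadratic forms in Bernoullis with an explicit $p_i$-dependent MGF bound) has already been done in Theorem~\ref{thm::Bernmgf}; Corollary~\ref{coro::actsharp} is essentially a direct specialization to the entrywise-squared matrix $A_0\circ A_0$.
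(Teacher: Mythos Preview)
Your proposal is correct and is essentially the same argument the paper uses: the paper explicitly says the proof of Corollary~\ref{coro::actsharp} ``follows exactly that of Corollary~\ref{coro::action},'' which amounts to applying Theorem~\ref{thm::Bernmgf} with $B = A_0\circ A_0$, checking $C_0\lambda^2 \le 38.94/(64^2\twonorm{A_0}^2) < 1/(104\twonorm{A_0}^2)\le 1/(104\,\|B\|_1\vee\|B\|_\infty)$ via $\max_i\sum_j a_{ij}^2 = \max_i\twonorm{A_0 e_i}^2 \le \twonorm{A_0}^2$, and then combining the three exponential factors (using $\sigma_i^2\sigma_j^2\le p_ip_j$) to get constants $C_0\cdot 1.02<40$ and $C_0\cdot 1.373<54$. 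Your explicit breakdown of the three factors matches the paper's intermediate bound $\exp(1.02\,t\sum_j p_j a_{jj}^2)\exp(1.373\,t\sum_{i\ne j}p_ip_j a_{ij}^2)$ in the proof of Corollary~\ref{coro::action}.
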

The proof of Corollary~\ref{coro::actsharp} follows exactly that of 
Corollary~\ref{coro::action} in view of Theorem~\ref{thm::Bernmgf} and
is thus omitted. The Lemma is thus proved.
\end{proofof2}

\begin{remark}
An alternative bound can be stated as follows: for $\lambda \le \inv{64 a_{\infty}}$,
\bens
\E \exp(\lambda S_0) & \le & 
\exp \left(41 \lambda^2 \sum_{j=1}^m  \sigma^2_j a^2_{jj} + 52\lambda^2 \twonorm{A_1 \vecp}^2 \right)
\eens
where $\vecp =[p_1, \ldots, p_m]$ and $\sigma_j^2 = p_j(1-p_j)$.
The proof follows from a direct analysis based on  the quadratic form
$\xi^T A_2 \xi$ on the RHS of \eqref{eq::tightS0}, which is omitted from the present
paper. This bound may lead to a slight improvement upon the final
bound in \eqref{eq::final}. We do not pursue this improvement here because the bound in~\eqref{eq::final} is 
sufficient for us to obtain the final large deviation bound as stated in Theorem~\ref{thm::MNsparse}.
\end{remark}

\subsection{Proof of Lemma~\ref{lemma::expmgfS1}}
\label{sec::mgfSoffd}
\begin{proofof2}
Let $\E_{X}$ and $\E_{\xi}$ denote the expectation with respect to random variables in vectors $X$  and $\xi$ respectively.
Recall
\bens
S_{\off} = \sum_{i \not= j} X_i X_{j} (A_{\xi, i j}) =: \sum_{i \not=
  j} \tilde{a}_{ij} X_i X_j 
\; \; \text{ where } \; \; 
 \tilde{a}_{ij} =  d_i^T D_{\xi} d_j= \sum_{k=1}^m d_{ik} \xi_k d_{j k}
\eens
To estimate the moment generating function of $S_{\off}$, we first 
consider $\xi$ as being fixed and thus treat $\tilde{a}_{ij}$ as fixed
coefficients.  Lemma~\ref{lemma::RRbound} reduces the original problem of 
estimating the moment generating function of $S_{\off}$ to the new
problem of estimating the moment generating function of $S := \xi^T
(A_0 \circ A_0)\xi$, which involves a new quadratic form with 
independent non-centered random variables $\xi_1, \ldots, \xi_m \in
\{0, 1\}$ and the symmetric matrix $(A_0 \circ A_0)$ as shown in \eqref{eq::RRbound}.
Lemma~\ref{lemma::RRbound} follows from the proof of Theorem
1~\cite{RV13} directly. We omit the proof in this paper.
\begin{lemma}
\label{lemma::RRbound}
Consider $\xi \in \{0, 1\}^m$ as being fixed and denote by 
$A_{\xi} = D_0 D_{\xi} D_0$ and $A_0 = D_0^2 = (a_{ij})$.
Then, for some constant $C$ and $\abs{\lambda} \le \inv{12C
  \twonorm{A_0}}$ and $ C_2 = 32 C^2$,
\ben
\label{eq::RRbound}
\E_X \exp(\lambda S_{\off})  \le
 \exp\left(C_2 \lambda^2
  \fnorm{A_{\xi}}^2\right) =  \exp\left(C_2 \lambda^2 \xi^T (A_0 \circ A_0) \xi \right). 
\een
\end{lemma}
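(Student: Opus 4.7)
\begin{proofof}{Lemma~\ref{lemma::RRbound} (proposal)}
The plan is to treat $\xi \in \{0,1\}^m$ as fixed throughout, so that the coefficients $\tilde a_{ij} = d_i^T D_\xi d_j$ are deterministic and $S_{\off}$ becomes an off-diagonal quadratic form $X^T B X$ in the independent mean-zero sub-gaussian vector $X$, with $B := \offd(A_\xi)$. Once this reduction is in place, the statement is exactly the off-diagonal half of the Hanson-Wright MGF bound applied to the matrix $A_\xi$, so I would import verbatim the decoupling-plus-Gaussian-reduction chain used in the proof of Theorem~1 of \cite{RV13}, and only take care to re-express the final bound in terms of $A_0$ rather than $A_\xi$.

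The key steps, in order, are as follows. First, apply standard decoupling (e.g.\ Theorem~6.1.1 of \cite{PG99}) to replace $S_{\off}$ by the decoupled bilinear chaos $\sum_{i\ne j} \tilde a_{ij} X_i X_j'$ where $X'$ is an independent copy of $X$; this costs only a universal constant factor inside the MGF. Second, invoke the comparison of sub-gaussian to Gaussian chaos used in \cite{RV13}: since $\|X_i\|_{\psi_2} \le K = 1$, the MGF of the decoupled form is dominated by the MGF of $g^T B g'$ with $g,g'$ independent standard Gaussians. Third, compute the Gaussian-chaos MGF by rotating $B$ to diagonal form and applying the elementary bound $\E \exp(\lambda g_i g_j') \le \exp(C\lambda^2)$ coordinate-wise, yielding
\[
\E_X \exp(\lambda S_{\off}) \;\le\; \exp\!\left(C_2 \lambda^2 \|B\|_F^2\right)
\quad \text{for } |\lambda| \le \tfrac{1}{C\,\|B\|_2},
\]
with $C_2 = 32 C^2$.

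Fourth, pass from $B = \offd(A_\xi)$ back to $A_\xi$, and from $A_\xi$ back to $A_0$. For the Frobenius norm, $\|B\|_F \le \|A_\xi\|_F$ trivially, and by the Hadamard identity quoted in the excerpt,
\[
\|A_\xi\|_F^2 \;=\; \tr(D_\xi A_0 D_\xi A_0) \;=\; \xi^T (A_0 \circ A_0)\,\xi,
\]
which is the right-hand side of \eqref{eq::RRbound}. For the operator norm, since $D_\xi$ is a coordinate projection we have $\|D_\xi\|_2 \le 1$, hence
\[
\|A_\xi\|_2 \;=\; \|D_0 D_\xi D_0\|_2 \;\le\; \|D_0\|_2^2 \;=\; \|A_0\|_2,
\]
and $\|B\|_2 \le 2\|A_\xi\|_2 \le 2\|A_0\|_2$. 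Choosing the constant in the restriction on $\lambda$ as $|\lambda| \le 1/(12 C \|A_0\|_2)$ therefore covers $|\lambda| \le 1/(C\|B\|_2)$ with room to spare, completing the argument.

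The main obstacle in executing this plan is purely bookkeeping: carrying an explicit absolute constant $C$ through the decoupling step and the Gaussian comparison step so that the final constants $C_2 = 32C^2$ and the radius $1/(12C\|A_0\|_2)$ come out as stated. The probabilistic content, i.e.\ the chaos MGF bound itself, is already provided by \cite{RV13}; the only novelty here is recognizing that, because $D_\xi$ is a contraction, the restriction on $\lambda$ can be expressed uniformly in $\xi$ via $\|A_0\|_2$, which is what allows the subsequent averaging over $\xi$ in the proof of Lemma~\ref{lemma::expmgfS1}.
\end{proofof}
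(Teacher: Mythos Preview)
Your proposal is correct and matches the paper's approach exactly: the paper simply states that Lemma~\ref{lemma::RRbound} ``follows from the proof of Theorem~1~\cite{RV13} directly'' and omits the details, which are precisely the decoupling and Gaussian-reduction steps you outline, together with the observation $\twonorm{A_\xi} \le \twonorm{A_0}$ so that the constraint on $\lambda$ can be stated uniformly in $\xi$. One cosmetic simplification: since the decoupling variables $\delta_i(1-\delta_j)$ already annihilate the diagonal, you can apply the RV13 chain to $A_\xi$ itself rather than to $B = \offd(A_\xi)$, which saves the factor-of-two detour in the operator-norm bookkeeping.
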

Note that $\twonorm{D_{\xi} D_0} \le\twonorm{D_0}$ and hence by symmetry
\bens
 \twonorm{A_{\xi}} &  = &
\twonorm{D_0 D_{\xi} D_{\xi} D_0} = \twonorm{D_{\xi} D_0}^2  
\le \twonorm{D_{\xi}}^2\twonorm{D_0}^2 = \twonorm{A_0} \\
\fnorm{A_{\xi}}^2 & = & \tr(A_0 D_{\xi} A_0 D_{\xi}) = \xi^T (A_0 \circ A_0) \xi.
\eens 
In order to estimate the moment generating function
$S_{\offd}$,  we now take expectation with respect to $\xi$ on both sides
of~\eqref{eq::RRbound}. 
Thus we have for  $\abs{\lambda} \le \inv{12C \twonorm{A_0}}$
\ben
\label{eq::localbound}
\E_{\xi}\E_X\exp(\lambda S_{\off}) 
& \le & \E \exp\left(C_2 \lambda^2 \fnorm{A_{\xi}}^2\right) =  \E\exp\left(C_2 \lambda^2 \xi^T (A_0 \circ A_0)\xi \right).
\een
\begin{corollary}
\label{coro::action}
Then for $\abs{\lambda} \le  \inv{58C \twonorm{A_0}}$ and $t := C_2
\lambda^2$, where $C_2 = 32 C^2$ and  $C$ is a large enough absolute constant, 
\bens
\E \exp\left(t \sum_{i,j} a^2_{ij} \xi_i \xi_j\right) 
& \le &  \exp\big(\lambda^2 M\big)
\eens
where $M := C^2 \big(33\sum_{i=1}^m a^2_{ii} p_i +  44 \sum_{i\not=j}
a_{ij}^2 p_i p_j \big)$.
\end{corollary}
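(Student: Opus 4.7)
The plan is to apply Theorem~\ref{thm::Bernmgf} directly to the Hadamard square matrix $B := A_0 \circ A_0$, whose entries are $b_{ij} = a_{ij}^2$, with the scalar parameter $t = C_2 \lambda^2$ playing the role of $\lambda$ in the theorem statement. The quadratic form of interest, $\sum_{i,j} a_{ij}^2 \xi_i \xi_j = \xi^T B \xi$, is then precisely of the type handled by Theorem~\ref{thm::Bernmgf}.

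The first step is to verify the admissibility hypothesis $t \le \inv{104 \max(\norm{B}_1, \norm{B}_\infty)}$. Since $A_0 \succeq 0$ is symmetric, $\sum_{i} a_{ij}^2 = (A_0^2)_{jj} \le \twonorm{A_0^2} = \twonorm{A_0}^2$, so $\norm{B}_\infty = \norm{B}_1 \le \twonorm{A_0}^2$. With $t = 32 C^2 \lambda^2$ and $\abs{\lambda} \le \inv{58 C \twonorm{A_0}}$, we get $t \le \frac{32}{58^2 \twonorm{A_0}^2} < \inv{104 \twonorm{A_0}^2}$, so the hypothesis is satisfied once $C$ is chosen appropriately.

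Then I would write out the three exponential factors produced by Theorem~\ref{thm::Bernmgf} for the matrix $B$:
\begin{itemize}
\item the main term $t\bigl(\sum_i a_{ii}^2 p_i + \sum_{i\ne j} a_{ij}^2 p_i p_j\bigr)$,
\item the variance term $\tfrac{1}{3} t \sum_{i\ne j} a_{ij}^2 \sigma_i^2 \sigma_j^2$, which I bound using $\sigma_i^2 \sigma_j^2 \le p_i p_j$,
\item the remainder term involving $C_5 \le 0.04$, bounded by $C_5 t \bigl(\tfrac{1}{2}\sum_i a_{ii}^2 p_i + \sum_{i\ne j} a_{ij}^2 p_i p_j\bigr)$.
\end{itemize}
Adding the coefficients on $\sum_i a_{ii}^2 p_i$ gives $1 + C_5/2 \le 1.02$, and on $\sum_{i\ne j} a_{ij}^2 p_i p_j$ gives $1 + \tfrac{1}{3} + C_5 \le 1.374$. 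Multiplying by $t = 32 C^2 \lambda^2$ yields coefficients $\le 33 C^2$ and $\le 44 C^2$, matching the definition of $M$.

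There is no serious obstacle: the one thing to be careful about is the bookkeeping of constants, in particular checking that $32 \cdot 1.02 < 33$ and $32 \cdot 1.374 < 44$, and that the admissibility range $\abs{\lambda} \le \inv{58C \twonorm{A_0}}$ is indeed tight enough (given the factor $32 C^2$ picked up when substituting $t = C_2 \lambda^2$) to satisfy Theorem~\ref{thm::Bernmgf}'s hypothesis. Since the corresponding proof of Corollary~\ref{coro::actsharp} is stated in the excerpt to follow the same template with $C_0 = 38.94$ in place of $C_2$, this corollary is a parallel instantiation of the same reduction; the paper explicitly notes that the proof of Corollary~\ref{coro::actsharp} follows this scheme and is omitted, and the present proof is therefore analogous.
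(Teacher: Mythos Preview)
Your proposal is correct and follows essentially the same route as the paper's own proof: apply Theorem~\ref{thm::Bernmgf} to $B=A_0\circ A_0$, check the admissibility condition via $\norm{B}_1=\norm{B}_\infty\le\twonorm{A_0}^2$ and $32/58^2<1/104$, combine the three exponential factors (obtaining coefficients $1.02$ and $1.373\ldots$), and multiply through by $t=32C^2\lambda^2$ to get $33C^2$ and $44C^2$. The only quibble is the phrase ``once $C$ is chosen appropriately'': the $C$'s cancel in $32C^2/(58C)^2$, so the admissibility check is independent of $C$.
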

Combining Lemma~\ref{lemma::RRbound},~\eqref{eq::localbound} and Corollary~\ref{coro::action},
we have for $\abs{\lambda} \le \inv{58 C \twonorm{A_0}}$
\bens
\E(\lambda S_{\off}) 
\le \E \exp\left(C_2 \lambda^2  \fnorm{A_{\xi}}^2\right) =
\E \exp\left(t\sum_{i,j} a^2_{ij} \xi_i \xi_j\right)  \le
\exp(\lambda^2 M).
\eens
Lemma~\ref{lemma::expmgfS1} thus holds. 
\end{proofof2}

\silent{
Hence by Theorem \ref{thm::Bernmgf},
we have for
\bens
\nonumber
\lefteqn{
\E \exp\big(t \sum_{i,j} a^2_{ij}  \xi_i \xi_j \big)
\le  \exp t\big(\sum_{j=1}^m  p_j a^2_{jj} + \sum_{i \not=j} a^2_{ij} p_i p_j\big)
\E \exp t S} \\
&\le &
\exp \big(1.02 t  \sum_{j=1}^m  p_j a^2_{jj} + 
1.38 t \sum_{j \not=i} a^2_{ij} p_i p_j \big) \\
& \le & \exp\big(C^2 \lambda^2 \big(33\sum_{j=1}^m  p_j a^2_{jj} +
44 \sum_{i\not=j} p_i p_j a_{ij}^2\big)\big) \le \exp\big(\lambda^2 M\big)
\eens
where  for $S:= \sum_{j=1}^m  a^2_{jj} (\xi_{j}- p_{j}) + 
\sum_{j=1}^m \sum_{i\not=j} a^2_{ij} (\xi_i  \xi_{j}- p_i p_{j})$
\bens
\sum_{i,j} a^2_{ij} \xi_i \xi_j = S + \E \sum_{i,j} a^2_{ij} \xi_i \xi_j
\; \; \text{ where } \; \; \E \sum_{i,j} a^2_{ij} \xi_i \xi_j 
= \sum_{j=1}^m  a^2_{ij} p_j +\sum_{i\not=j}  a^2_{ij} p_i p_j
\eens
by independence of $\xi_i$ and $\xi_j$.}

Corollary~\ref{coro::action} follows from Theorem~\ref{thm::Bernmgf} immediately,
which is derived in the current paper for estimating the moment generating function of $S' :=
\xi^T A \xi$ where $A$ is an arbitrary matrix and $\xi$ is a Bernoulli random vector with independent elements as
defined in Theorem~\ref{thm::MNsparse}.

\begin{proofof}{Corollary~\ref{coro::action}}
Clearly for the choices of $t$ and $\lambda$,
\bens
t = C_2\lambda^2 & \le & \frac{32 C^2}{58^2 C^2 \twonorm{A_0}^2} 
\le  \inv{104 \twonorm{A_0}^2} \le 
\inv{104 \norm{A_0 \circ A_0}_1  \bigvee \norm{A_0 \circ A_0}_{\infty}},
\eens
where we use the fact that for symmetric $A_0$,
\bens
\norm{A_0 \circ A_0}_1 = \norm{A_0 \circ A_0}_{\infty}
= \max_{1 \le i\le m} \sum_{j=1}^m a_{ij}^2  = \max_{1 \le i \le m} \twonorm{A_0 e_i}^2 
\le \twonorm{A_0}^2.
\eens
Thus we can apply Theorem~\ref{thm::Bernmgf} with $B := (A_0 \circ
A_0)$ to obtain for $0< t \le \inv{104(\norm{A}_1 \vee \norm{A}_{\infty})}$,
\bens
\nonumber
\E \exp\left(t\sum_{i,j} a^2_{ij} \xi_i \xi_j\right) 
& \le & \exp\big(1.02 t \sum_{j=1}^m  p_j a^2_{jj}\big) 
\exp\left(1.373 t\sum_{i\not= j} a^2_{ij}  p_i p_j \right) \\
\label{eq::tbound}
& \le &
 \exp\big(C^2 \lambda^2
 \big(33\sum_{j=1}^m  p_j a^2_{jj} +
44 \sum_{i\not=j} p_i p_j a_{ij}^2\big)\big).
\eens
\end{proofof}

\silent{
\begin{proofof}{Corollary~\ref{coro::actsharp}}
Clearly for the choice of $\lambda$,
\bens
t := C_0 \lambda^2 & \le & \frac{38.94}{64^2  \twonorm{A_0}^2} 
\le  \inv{104 \twonorm{A_0}^2} \le 
\inv{104 \norm{A_0 \circ A_0}_1  \bigvee \norm{A_0 \circ A_0}_{\infty}},
\eens
where we use the fact that for symmetric $A_0$,
\bens
\norm{A_0 \circ A_0}_1 = \norm{A_0 \circ A_0}_{\infty}
= \max_{1 \le i\le m} \sum_{j=1}^m a_{ij}^2  = \max_{1 \le i \le m} \twonorm{A_0 e_i}^2 
\le \twonorm{A_0}^2.
\eens
Thus we can apply Theorem~\ref{thm::Bernmgf} with $B := (A_0 \circ A_0)$
 to obtain for $0< t \le \inv{104(\norm{B}_1 \vee \norm{B}_{\infty})}$,
\ben
\nonumber
\E \exp\left(t\sum_{i,j} a^2_{ij} \xi_i \xi_j\right) 
& \le & \exp\big(1.02 t \sum_{j=1}^m  p_j a^2_{jj}\big) 
\exp\left(1.373 t\sum_{i\not= j} a^2_{ij}  p_i p_j \right) \\
\label{eq::tbound}
& \le & \exp\big( \lambda^2  \big(40 \sum_{j=1}^m  p_j a^2_{jj} + 54 \sum_{i\not=j} p_i p_j
 a_{ij}^2\big)\big) 
\een
\end{proofof}}

\section{Proof of Theorem~\ref{thm::Bernmgf}}
\label{sec::noncenter}
We first state the following Theorem~\ref{thm::decouple} from a note by 
Vershynin~\cite{VerDecouple}; we state its consequence as follows.
\begin{theorem}
\label{thm::decouple}
Let $A$ be an $m \times m$ matrix.
Let $X =(X_1, \ldots, X_m)$ be a random vector with independent
mean zero coefficients. Then, for every convex function $F$,
\ben
\label{eq::decoupled}
\E F(\sum_{i\not=j} a_{ij}X_i X_j)
 \le \E F(4 \sum_{i\not=j} a_{ij}X_i X'_j).
\een
where $X'$ is an  independent copy of $X$.
\end{theorem}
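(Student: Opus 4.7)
The plan is to prove this classical decoupling inequality by introducing an auxiliary Bernoulli$(1/2)$ random partition of $[m]$ and then applying Jensen's inequality twice: once to pass from the full sum to a single off-diagonal block of the partition, and once (using the mean-zero hypothesis) to re-enlarge that block back to the full decoupled sum over all $i \neq j$.

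First I would let $\delta_1, \ldots, \delta_m$ be i.i.d.\ Bernoulli$(1/2)$, independent of $X$ and $X'$, and set $I := \{i : \delta_i = 1\}$. Since $\E_\delta[4\delta_i(1-\delta_j)] = 1$ for every $i \neq j$, one has the pointwise identity
\[
\sum_{i \neq j} a_{ij} X_i X_j \;=\; 4\,\E_\delta\!\left[\sum_{i \in I,\, j \in I^c} a_{ij} X_i X_j\right].
\]
Jensen's inequality applied to the convex function $F$ (with the factor $4$ kept inside) and then taking expectation over $X$ gives
\[
\E F\!\left(\sum_{i\neq j} a_{ij} X_i X_j\right) \;\le\; \E_\delta\, \E_X F\!\left(4 \sum_{i \in I,\, j \in I^c} a_{ij} X_i X_j\right).
\]

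Next, I would fix a realization of $\delta$. Because $I$ and $I^c$ are disjoint, the coordinates $\{X_i\}_{i \in I}$ and $\{X_j\}_{j \in I^c}$ are independent, so the joint distribution of $(X_I, X_{I^c})$ agrees with that of $(X_I, X'_{I^c})$ where $X'$ is the independent copy. Substituting yields
\[
\E_X F\!\left(4 \sum_{i \in I,\, j \in I^c} a_{ij} X_i X_j\right) \;=\; \E_{X,X'} F\!\left(4 \sum_{i \in I,\, j \in I^c} a_{ij} X_i X'_j\right).
\]

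The final step re-enlarges the index block $(I, I^c)$ to all $i \neq j$ using the mean-zero assumption. Set $U := \sum_{i \neq j} a_{ij} X_i X'_j$ and condition on the sigma-field generated by $\{X_i\}_{i \in I}$ together with $\{X'_j\}_{j \in I^c}$. Among the four index types $(i,j)$ with $i \neq j$, only $i \in I,\ j \in I^c$ survives: for $(I,I)$ the factor $X'_j$ is integrated out with mean zero; for $(I^c,I)$ and $(I^c,I^c)$ the factor $X_i$ is integrated out with mean zero. Hence
\[
\E[\,4U \mid X_I, X'_{I^c}\,] \;=\; 4 \sum_{i \in I,\, j \in I^c} a_{ij} X_i X'_j,
\]
and a second application of Jensen's inequality gives
\[
F\!\left(4 \sum_{i \in I,\, j \in I^c} a_{ij} X_i X'_j\right) \;\le\; \E\!\left[F(4U) \,\middle|\, X_I, X'_{I^c}\right].
\]
Chaining the three bounds and averaging over $\delta$ (the right-hand side no longer depends on $\delta$) produces $\E F(4U)$, which is exactly the claimed inequality. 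The main delicate point is the bookkeeping in the conditional-expectation step — one has to choose the conditioning sigma-field so that every index type other than $(I, I^c)$ contains at least one factor that is integrated out to zero. Once that sigma-field is correctly identified, the rest is a routine application of Jensen and tower property.
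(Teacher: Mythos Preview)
Your argument is correct and is in fact the standard proof of this decoupling inequality (essentially the argument of de~la~Pe\~na--Montgomery-Smith as presented, e.g., in Vershynin's notes). Note, however, that the paper does \emph{not} supply its own proof of Theorem~\ref{thm::decouple}: it is quoted from~\cite{VerDecouple} and used as a black box in the proof of Theorem~\ref{thm::Bernmgf}. So there is nothing to compare against in the paper itself; your write-up simply fills in what the paper omits by citation, and the two-step Jensen structure you describe (random Bernoulli partition to reduce to an off-diagonal block, then conditioning on $(X_I, X'_{I^c})$ to re-enlarge) is exactly the classical route.
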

Let $Z_i := \xi_i - p_i$. Denote by $\sigma_i^2 = p_i(1-p_i)$.
For all  $Z_i$, we have $\abs{Z_i} \le 1$, $\E Z_i = 0$ and 
\ben
\label{eq::Z2}
\E Z_i^2 & = & (1-p_i)^2 p_i + p_i^2 (1-p_i) = p_i(1-p_i) =\sigma^2_i,
\\
\label{eq::Zabs}
\E \abs{Z_i} & = & (1-p_i) p_i + p_i(1-p_i) = 2p_i(1-p_i) = 2 \sigma^2_i.
\een

\begin{proofof}{Theorem~\ref{thm::Bernmgf}}
Let  $Z_i = \xi_i -p_i$. 
Denote by $\breve{a}_{i} := \sum_{j\not=i} (a_{ij} + a_{ji}) p_j + a_{ii}$.
We express the quadratic form as follows:  
\bens
\sum_{i=1}^m a_{ii} (\xi_i - p_i) 
+ \sum_{i\not=j} a_{i j} (\xi_i  \xi_{j}- p_i p_{j})
& = &
\sum_{i\not=j} a_{ij} Z_i Z_j + \sum_{j=1}^m Z_j \breve{a}_{i} =: S_1
+ S_2.
\eens
We first state the following bounds on  the moment generating functions
of $S_1$ and $S_2$  in \eqref{eq::mgS1} and \eqref{eq::mgS2}.
The estimate on the moment generating function for $\sum_{i,j} a_{ij}
\xi_i \xi_j$ then follows 
immediately from the Cauchy-Schwartz inequality in view of \eqref{eq::mgS1} and \eqref{eq::mgS2}. 

\noindent{\bf Bounding  the moment generating function for $S_1$.}
In order to bound the moment generating function for $S_1$, 
we start by a decoupling step following
Theorem~\ref{thm::decouple}. Let  $Z'$ be an  independent copy of $Z$. \\

\noindent{\bf Decoupling.} 
Now consider random variable $S_1  :=  \sum_{i\not=j } a_{ij} (\xi_i
-p_i)(\xi_j -p_j)  = \sum_{i\not=j } a_{ij} Z_i Z_j$ and
\bens 
S'_1 := \sum_{i \not=j} a_{ij} Z_i Z'_j, 
\; \text{ we have } \; \;
\E \exp(2\lambda S_1)  \le \E \exp(8\lambda S'_1) =: f
\eens
by~\eqref{eq::decoupled}. Thus we have by independence of $Z_i$,
\ben
\label{eq::precur}
f & := &  \E_{Z'}\E_{Z}\exp\left(8 \lambda  \sum_{i=1}^m Z_i  \sum_{j
    \not=i} a_{ij} Z'_j\right) =
\E_{Z'} \prod_{i=1}^m \E \left(\exp\left(8 \lambda Z_i 
\tilde{a}_{i}\right)\right).
\een
First consider $Z'$ being fixed. Let us define 
\bens
t_i := 8 \lambda \tilde{a}_i \;\; \text{ where } \; \; 
\tilde{a}_{i} := \sum_{j\not=i} a_{ij} Z'_j.
\eens
Hence for all $0 \le \lambda \le \inv{104 \norm{A}_{\infty} }$
 and $C_4 := \frac{4}{13} e^{1/13}$, and any given fixed $Z'$ by~\eqref{eq::elem}
\ben
\nonumber
\E\exp\left(8 \lambda  \tilde{a}_{i} Z_i\right)  
& := & \E\exp\left(t_i Z_i\right) \le 
1 + \inv{2} t_i^2 \E Z_i^2 e^{\abs{t_i}} \le
\exp\left(\inv{2} t_i^2 \E Z_i^2 e^{\abs{t_i}} \right) \\
& \le & 
\label{eq::oneterm}
\exp\left(\frac{4}{13} e^{1/13} \lambda \abs{\tilde{a}_{i}} 
 \sigma^2_i \right) =:
\exp\left(C_4 \lambda \abs{\tilde{a}_{i}} \sigma^2_i \right)
\een
where $Z_i, \forall i$ satisfies: $\abs{Z_i} \le 1$, $\E Z_i = 0$ and 
$\E Z_i^2 = \sigma_i^2$, 
\bens
\abs{t_i} = \abs{8 \lambda \tilde{a}_{i}} 
\le 8 
\lambda\sum_{j\not=i} \abs{a_{ij}}  \abs{Z'_j}  \le 8  \lambda
\norm{A}_{\infty}  \le \inv{13} \; 
\; \text{ and } \quad \; \inv{2} t_i^2 \le \frac{4}{13} \lambda
\abs{\tilde{a}_i}.
\eens
 Denote by $\abs{\bar{a}_{j}} := \sum_{i \not=j}
\abs{a_{ij}} \sigma^2_i$. Thus by \eqref{eq::precur} and \eqref{eq::oneterm}
\bens
f & \le &  
\E_{Z'} \prod_{i=1}^m
\exp\left(C_4 \lambda \abs{\tilde{a}_{i}}  \sigma^2_i\right) \le 
\E_{Z'} \exp\left(C_4\lambda\sum_{i=1}^m  
\sigma^2_i \sum_{j\not=i} \abs{a_{ij}}\abs{Z'_j}\right) \\
& = & 
\prod_{j=1}^m \E \exp\left(C_4\lambda\abs{Z'_j} 
\sum_{i \not=j}^m \abs{a_{ij}} \sigma^2_i \right) 
=: \prod_{j =1}^m \E\exp\left(C_4 \lambda  \abs{\bar{a}_{j}}\abs{Z'_j}
\right)
\eens
where we have by the elementary approximation~\eqref{eq::elem}
and $\breve{t}_j  := C_4 \lambda \abs{\bar{a}_{j}}$
\bens
\E\exp\left(C_4 \lambda  \abs{\bar{a}_{j}}\abs{Z'_j} \right) 
& =: & \E\exp\left(\breve{t}_j \abs{Z'_j} \right)
\le  1 + \E \left(\breve{t}_j \abs{Z'_j} \right) + 
\inv{2} (\breve{t}_j)^2 \E (Z'_j)^2 e^{\abs{\breve{t}_j}} \\
& \le &
 \exp\left(2 \breve{t}_j \sigma^2_j + \inv{2} (\breve{t}_j)^2  
\sigma^2_j e^{0.0008}\right)
\le 
 \exp\left(2.0005 \breve{t}_j \sigma^2_j\right)\\
& \le &
 \exp\left(2.0005  C_4 \lambda \abs{\bar{a}_{j}}   \sigma^2_j \right) 
 \le \exp\left(\frac{2}{3} \lambda  \sigma^2_j \sum_{i \not=j} \abs{a_{ij}} \sigma^2_i\right)
\eens
where $\E (Z'_i)^2 =\sigma^2_i$ and $\E \abs{Z'_i} = 2 \sigma^2_i$
following~\eqref{eq::Z2} and~\eqref{eq::Zabs},  and for  $0< \lambda \le \inv{104 \max(\norm{A}_1,\norm{A}_{\infty})}$,
\bens
\breve{t}_j := C_4 \lambda \abs{\bar{a}_{j}} 
= \frac{4}{13} e^{1/13} \lambda  \sum_{i \not=j} \abs{a_{ij}} \sigma^2_i
\le \frac{4}{13} e^{1/13} \inv{4}
\frac{\sum_{i} \abs{a_{ij}}}{104 \norm{A}_{1}} 
\le \inv{13} e^{1/13} \inv{104}< 0.0008.
\eens
Thus for every $0 \le \lambda \le \inv{104\max(\norm{A}_1, \norm{A}_{\infty})}$,
\ben
\label{eq::mgS1}
\E \exp(\lambda 2S_1)
& \le & \exp \left(\frac{2}{3}\lambda \sum_{i\not=j} 
\abs{a_{ij}} \sigma^2_i  \sigma^2_j \right).
\een
\noindent{\bf Bounding  the moment generating function for $S_2$.}
Recall
\bens
S_2 & := & 
\sum_{i=1}^m Z_i \left(\sum_{j\not=i} (a_{ij} + a_{ji}) p_{j} +
a_{ii}\right) =: \sum_{i=1}^m Z_i \breve{a}_{i}.
\eens
Let $a_{\infty} := \max_{i} \abs{\breve{a}_{i}} \le \norm{A}_{\infty} +
\norm{A}_1$. 
Thus we have by Lemma~\ref{lemma::bern-sum}
\bens
g:= \E \exp(2 \lambda S_2) & = & 
\E\exp\left(2\lambda\sum_{i=1}^m Z_i \breve{a}_{i}\right) \\
\label{eq::s2bound}
& \le & \exp\left(2 \lambda^2   e^{2\abs{\lambda} a_{\infty}} 
\sum_{i=1}^m  \breve{a}_{i}^2 \sigma^2_i \right) 
 \le  \exp\left(C_5 \abs{\lambda} \sum_{i=1}^m  \abs{\breve{a}_{i}} p_i \right)
\eens
where $e^{2\lambda a_{\infty}} 2 \lambda \abs{\breve{a}_i} \le \inv{26}
e^{1/26} =: C_5 \le 0.04$ given that for all $\abs{\lambda} \le \inv{52 (\norm{A}_{\infty} +
  \norm{A}_1)}$
\bens
2 \lambda \abs{\breve{a}_{i}} \le \frac{ 2(\norm{A}_{\infty} + \norm{A}_1)}
{52 (\norm{A}_{\infty} + \norm{A}_1)} \le \inv{26} \; \;  \text{ for all $i$}.
\eens
Thus we have for $0 < \lambda\le \inv{52 (\norm{A}_{\infty} +
  \norm{A}_1)}$,
\ben
\label{eq::mgS2}
\E \exp(\lambda 2 S_2) 
& \le &
\exp\left(0.02 * 2\lambda{\sum_{i=1}^m p_i \abs{\breve{a}_i} }\right).
\een
Hence by the Cauchy-Schwartz inequality,  for all 
$0< \lambda \le \inv{104 (\norm{A}_{\infty} \vee  \norm{A}_1)}$,
\bens
\lefteqn{
\E \exp\left(\lambda \left(\sum_{i=1}^m a_{ii} (\xi_i - p_i) 
+ \sum_{i\not= j} a_{ij}  (\xi_i \xi_j - p_i p_j )\right)\right)}\\
& =& \E \exp\left(\lambda ( S_1 + S_2)\right) \le
\E^{1/2}\exp(2 \lambda  S_1) \E^{1/2}\exp(2\lambda  S_2)
\eens
The theorem is thus proved by multiplying 
$ \exp\left(\lambda \left(\sum_{i=1}^m a_{ii} p_i + \sum_{i\not= j}
    a_{ij} p_i p_j\right)\right)$ on both sides of the above
inequality.
\end{proofof}

\section*{Acknowledgements}
Mark Rudelson encouraged me to apply the method from~\cite{RV13} to
prove the first result in the current paper.
The author is also grateful for discussions with Tailen Hsing, 
which helped improving the presentation of this paper tremendously.
This work was supported in part by NSF under Grant DMS-1316731 and
Elizabeth Caroline Crosby Research Award from the Advance
Program at the University of Michigan. The proof presented here was
filed as Technical Report, 539, October, 2015, Department of Statistics, University of Michigan.

\appendix

\section{Proof of Lemma~\ref{lemma::expmgf2} }
\begin{proofof2}
Let  $Z := X^2 - \E X^2$ and 
$Y :={Z}/{\norm{Z}_{\psi_1}}$.
Then  $Y$ and $Z$ are both centered sub-exponential random variables with
$\norm{Y}_{\psi_1} =1$ and 
\bens
\norm{Z}_{\psi_1} = \norm{X^2 -\E X^2}_{\psi_1}
 \le 2 \norm{X^2}_{\psi_1} \le 4\norm{X}^2_{\psi_2} \le 4K^2
\eens
which follows from the triangle inequality and Lemma 5.14 of~\cite{Vers12}.

Now set $t :=\tau \norm{X^2 -\E X^2}_{\psi_1}$, where
for $\abs{\tau} \le \inv{23.5 e K^2}$,
\bens
e \abs{t} = e \abs{\tau} \norm{X^2 -\E X^2}_{\psi_1}
\le \frac{ 4 K^2 }{23.5 K^2} < \frac{8}{47} 
\eens
and $2(e \abs{t})^3 \le (e \abs{t})^2$.
By Lemma 5.15 of~\cite {Vers12}, we have for all $k$,
\bens
\E \exp(t Y)
& = & 
1 + t \E Y + \sum_{p=2}^{\infty} \frac{t^p \E Y^p}{p!} 
\le 1 + \sum_{p=2}^{\infty} \frac{\abs{t}^p \E \abs{Y}^p}{p!} \\
& \le & 
1 + \sum_{p=2}^{\infty} \frac{\abs{t}^p p^p}{p!} 
\le 1 + \sum_{p=2}^{\infty} \frac{\abs{t}^p e^p}{\sqrt{2\pi p}} 
\eens
Thus 
\bens
\E \exp(t Y)
& \le &
1 +  \frac{(t e)^2}{2\sqrt{\pi}}
+ \inv{\sqrt{6\pi}}\sum_{p=3}^{\infty} (e \abs{t})^p  \\
& \le &  
1+  \frac{\abs{t}^2 e^2}{2\sqrt{\pi}}
+ \inv{\sqrt{6\pi}}\frac{ (e \abs{t})^3}{1- e\abs{t}}  
\le 1+  \frac{\abs{t}^2 e^2}{2\sqrt{\pi}}
+ \frac{8(e \abs{t})^2}{39\sqrt{6\pi}}\\
& < &  
1+ e^2\tau^2 \norm{X^2 -\E X^2}_{\psi_1}^2
(\inv{2\sqrt{\pi}} +  \frac{8}{39\sqrt{6 \pi}}) \\
& \le & 
1+ 38.94\abs{\tau}^2  K^4
\eens
where we used the following form of Stirling's approximation
for all $p \ge 2$,
\bens
\inv{p!} \le \frac{e^p}{p^p} \inv{\sqrt{2 \pi p}}
 \le \frac{e^p}{p^p} \inv{2\sqrt{\pi}}.
\eens
The lemma is thus proved given that
\bens
 \E \exp \tau (X^2 - \E X^2) =
 \E \exp\left(\tau \norm{X^2 - \E X^2}_{\psi_1} Y \right)
 = \E \exp(t Y).
\eens
\end{proofof2}

\section{Proof of Lemma~\ref{lemma::taylor}}
\label{sec::Taylor}
\begin{proofof2}
Note that the following
holds by Lemma 5.14~\cite {Vers12},
\bens
\norm{X_i}^2_{\psi_2} \le \norm{X_i^2}_{\psi_1} \le 2
\norm{X_i}^2_{\psi_2} = 2K^2.
\eens
For all $k$, let  $Y_k :={X_k^2}/{\norm{X_k^2}_{\psi_1} }$. 
By definition, $Y_k$ is a sub-exponential random
variable with $\norm{Y_k}_{\psi_1} = 1$.
We now set
$t_k :=\lambda a_{kk} \norm{X_k^2}_{\psi_1}$.
Following the  proof of Lemma~\ref{lemma::expmgf2}, we
first use the Taylor expansions to obtain for all $k$,
\bens
\lefteqn{
\E \exp(t_k Y_k)  := \E \exp\left(\lambda a_{kk}X_k^2\right) = 
1 + t_k \E Y_k + \sum_{p=2}^{\infty} \frac{t_k^p \E Y_k^p}{p!} } \\
& \le &  1 + t_k \E Y_k + \sum_{p=2}^{\infty} \frac{\abs{t_k}^p \E \abs{Y_k}^p}{p!} \\
& \le &  
1+ \lambda a_{kk}  \E X_k^2  + \frac{\abs{t_k}^2 e^2}{2\sqrt{\pi}}
+ \inv{\sqrt{6\pi}}\sum_{p=3}^{\infty} (e \abs{t_k})^p  \\
& \le &  
1+ \lambda a_{kk} \E X_k^2 +
 \frac{\abs{t_k}^2 e^2}{2\sqrt{\pi}}
+ \inv{\sqrt{6\pi}}2 (e \abs{t_k})^3  \\
& < &  
1+ \lambda a_{kk} \E X_k^2 +  e^2
(\lambda a_{kk} \norm{X_k^2}_{\psi_1} )^2 (\inv{2\sqrt{\pi}} +
\inv{\sqrt{6 \pi}})  \le  
1+ \lambda a_{kk} \E X_k^2 
+ 16 \abs{\lambda a_{kk}}^2  K^4.
\eens
where
$$e \abs{t_k} \le 
\frac{ \abs{a_{kk}}\norm{X_k^2}_{\psi_1}  }{4 K^2 \max_{k}
  \abs{a_{kk}}} \le \inv{2}$$ and $2(e \abs{t_k})^3 \le (e \abs{t_k})^2$. 
The lemma is thus proved.
\end{proofof2}

\section{Proof of \eqref{eq::mgfboundS} }
\label{sec::HWapp}
\begin{proofof2}
The proof structure follows from the proof of Theorem~\ref{thm::HW}~\cite{RV13}.
Recall $S := \sum_{i\not=j}^m a_{ij} X_i X_j \xi_i \xi_j$.
We start with a decoupling step. \\

\noindent{\bf Step 1. Decoupling.} 
Let $\delta = (\delta_1, \ldots, \delta_m) \in \{0, 1\}^m$ be a random vector
with independent Bernoulli random variables with  
$\E \delta_i = 1/2$, which is independent of $X$ and $\xi$.
Let $X_{\Lambda_{\delta}}$ denote $(X_i)_{i \in \Lambda_{\delta}}$ for
a set $\Lambda_{\delta} := \{ i \in [m] : \delta_i = 1\}$.
Let $\E_{X}$, $\E_{\xi}$ and 
$\E_{\delta}$ denote the expectation with respect to random variables in $X$, $\xi$ and $\delta$ respectively.
Now consider random variable
\bens 
S_{\delta} := \sum_{i,j} \delta_i (1-\delta_j) a_{ij} X_i 
X_j \xi_i \xi_j
\; \text{ and hence} \; 
S = 4 \E_{\delta} S_{\delta}.
\eens
By Jensen's inequality, for all $\lambda \in \R$,
\ben
\label{eq::Jensen}
\E \exp(\lambda S)
= \E_{\xi}  \E_{X} \exp(\E_{\delta} 4 \lambda S_{\delta})
 \le \E_{\xi}  \E_{X} 
\E_{\delta} \exp(4 \lambda S_{\delta}).
\een
where the last step holds because $e^{a x}$ is convex on $\R$
for any $a \in \R$.

Consider $\Lambda_{\delta} :=\{i\in[m]: \delta_i = 1\}$.
Denote by $f(\xi, \delta, X_{\Lambda_{\delta}})$ the conditional
moment generating function of random variable $4S_{\delta}$:
$$f(\xi, \delta, X_{\Lambda_{\delta}}) 
:=\E\left(\exp(4 \lambda S_\delta) |\xi, \delta,
  X_{\Lambda_{\delta}}\right).$$
Conditioned upon $X_{\Lambda_{\delta}}$ for a fixed realization of $\xi$ and $\delta$,
we rewrite $S_{\delta}$
\bens
S_{\delta} := \sum_{i\in \Lambda_{\delta}, j\in \Lambda_{\delta}^c} a_{ij} X_i X_j  \xi_i \xi_j=
\sum_{j\in \Lambda_{\delta}^c} X_j \left(\xi_j \sum_{i\in \Lambda_{\delta}} a_{ij} X_i \xi_i\right)
\eens
as a linear combination of mean-zero subgaussian random 
variables $X_j, j \in \Lambda_{\delta}^c$, with fixed coefficients.
Thus the conditional distribution of $S_{\delta}$ is sub-gaussian with
$\psi_2$ norm being upper bounded
by the $\ell_2$ norm of the coefficient vector $(\xi_j \sum_{i\in \Lambda_{\delta}} a_{ij} X_i \xi_i)_{j \in 
\Lambda_{\delta}^c}$~\cite{Vers12}(cf. Lemma 5.9).

Thus, conditioned upon $\xi, \delta$ and $X_{\Lambda_{\delta}}$, 
\ben
\label{eq::variance}
\norm{S_\delta}_{\psi_2} \le  C_0 \sigma_{\delta, \xi} \; \; 
\text{where }
\; \; \sigma_{\delta, \xi}^2 = 
\sum_{j\in \Lambda_{\delta}^c} \xi_j  \left(\sum_{i\in \Lambda_{\delta}} a_{ij} X_i \xi_i\right)^2.
\een
Thus we have for some large absolute $C >0$
\ben
\label{eq::mgf-subg-cond}
f(\xi, \delta,  X_{\Lambda_{\delta}})= \E\left(\exp(4 \lambda S_\delta) |\xi, \delta, X_{\Lambda_{\delta}}\right)
\le \exp(C \lambda^2 \norm{S_\delta}_{\psi_2}^2)
\le \exp(C' \lambda^2  \sigma_{\delta, \xi}^2).
\een
Taking the expectations of both sides with respect to 
$X_{\Lambda_{\delta}}$
 and $\xi$, we obtain
\ben
\label{eq::mgf-subg}
\E_{\xi} \E_{X_{\Lambda_{\delta}}}f(\xi, \delta, X_{\Lambda_{\delta}})
&  = & 
\E_{\xi} \E_{X_{\Lambda_{\delta}}} \E\left(\exp(4 \lambda S_\delta)
  |\xi, \delta, X_{\Lambda_{\delta}}\right) \\
\nonumber
& \le &  
\E_{\xi} \E_{X_{\Lambda_{\delta}}} \exp(C' \lambda^2  \sigma_{\delta,
  \xi}^2) =: \tilde{f}_{\delta}
\een
\noindent{\bf Step 2. Reduction to normal random variables.} 
Let $\delta, \xi$ and $X_{\Lambda_{\delta}}$ 
be a fixed realization of the random vectors defined as above.
Let $g=(g_1, \ldots, g_n)$, where $g_i \text{ i.i.d.} \sim N(0, 1)$.
Let $\E_{g}$ denote the expectation with respect to random
variables in $g$.
Consider random variable
\bens
Z := \sum_{j\in \Lambda_{\delta}^c} g_j \left(\xi_j \sum_{i\in
    \Lambda_{\delta}} a_{ij} X_i \xi_i\right) 
\eens
By the rotation invariance of normal distribution, for a fixed
realization of random vectors $\xi, \delta, X$, the conditional
distribution of $Z$ follows $N(0, \sigma_{\delta,
  \xi}^2)$ for $\sigma_{\delta, \xi}^2$ as defined
in~\eqref{eq::variance}.
Thus we obtain the conditional moment generating function for $Z$
denoted by
\bens
\E_g(\exp(t Z)) := \E(\exp(t Z)|\xi, \delta,X_{\Lambda_{\delta}}) = \exp(t^2 \sigma_{\delta, \xi}^2/2).
\eens
Choose $t = C_1 \lambda$ where $C_1 = \sqrt{2C'}$,  we have  
\bens
\E_g(\exp(C_1 \lambda Z)) = \exp(C' \lambda^2 \sigma_{\delta, \xi}^2)
\text{ which matches the RHS of~\eqref{eq::mgf-subg-cond}}.
\eens
Hence for a fixed realization of $\delta$, we can calculate $\f_{\delta}$ using $Z$ as follows:
\ben
\label{eq::redefine}
\tilde{f}_{\delta} := \E_{\xi} \E_X \exp(C' \lambda^2  \sigma_{\delta, \xi}^2)
= 
\E_{\xi}\E_{X}\E_g(\exp(C_1 \lambda Z)) =  \E \left(\exp(C_1 \lambda Z)|\delta \right).
\een
Conditioned on $\delta$, $\xi$ and $g$,
we can re-express $Z$:
\bens
Z & =  & \sum_{i\in \Lambda_{\delta}} X_i  \left(\xi_i \sum_{j\in
    \Lambda_{\delta}^c} a_{ij} g_j \xi_j \right) 
\eens
 as a linear combination of subgaussian random
variables $X_i, i \in \Lambda_{\delta}$ with fixed 
coefficients, which immediately imply that
\bens
\E(\exp(C_1 \lambda Z) | \delta,\xi, g)  
& \le &   \exp\left(C_3 \lambda^2  \sum_{i\in \Lambda_{\delta}}
\xi_i \left( \sum_{j\in \Lambda_{\delta}^c} a_{ij} g_j \xi_j \right)^2
\right).
\eens 
Let $P_{\delta}$ denote the coordinate projection of $\R^m$ onto $\R^{\Lambda_{\delta}}$.
Then conditioned on $\delta$, we have by definition of $\f_{\delta}$
as in~\eqref{eq::redefine} and the bounds on the conditional moment
generating function of $Z$ immediately above,
\ben
\nonumber
\tilde{f}_{\delta}
& = & \E \left(\exp(C_1 \lambda Z)|\delta \right) 
= 
\E_{\xi, g} \E \left(\exp(C_1 \lambda Z) | \delta, \xi, g\right)\\
& \le & 
\nonumber
\E \left[\exp\left(C_3 \lambda^2 \sum_{i\in \Lambda_{\delta}} \xi_i \left(\sum_{j\in
    \Lambda_{\delta}^c} a_{ij} g_j \xi_j  \right)^2 \right) | \delta\right] \\
& = & 
\nonumber
\E \left[ \exp\left(C_3 \lambda^2  \twonorm{D_{\xi} P_{\delta}  A (I-P_{\delta}) D_{\xi} g}^2\right) | \delta\right] \\
\label{eq::upperf}
& = & 
\E \left[\exp\left(C_3 \lambda^2  \twonorm{A_{\delta, \xi} g}^2\right) | \delta\right]
\een
where we denote by $A_{\delta, \xi} := D_{\xi} P_{\delta} A
(I-P_{\delta}) D_{\xi}$.
We will integrate $g$ out followed by $\xi$ in the next two steps.  \\

\noindent{\bf Step 3. Integrating out the normal random variables.}
Conditioned upon $\delta$ and $\xi$ and  by the rotation invariance of $g$, the random variables 
$\twonorm{A_{\delta, \xi} g}^2$ follows the same distribution as
$\sum_{i} s_i^2 g_i^2$ where $s_i$ denote the singular values of
$A_{\delta, \xi}$, with 
\ben
\nonumber
\max_i {s_i}
 & = &  \sqrt{\lambda_{\max}(A_{\delta,\xi}^T  A_{\delta,\xi})}
=: \twonorm{A_{\delta, \xi}} \le \twonorm{A}, \; \; \text{ and } \\
\sum_{i} s_i^2 = \fnorm{A_{\delta, \xi}}^2 
\nonumber
& = & \tr(A_{\delta, \xi}A_{\delta, \xi}^T)= 
\tr(D_{\xi} P_{\delta} A (I-P_{\delta}) D_{\xi} A^T P_{\delta} D_{\xi})\\
\label{eq::xiquad}
 & = & \tr(D_{\xi} P_{\delta} A (I-P_{\delta}) D_{\xi} A^T ) 
=\sum_{i\in \Lambda_{\delta}} \xi_i \sum_{j \in \Lambda_{\delta}^c} a_{ij}^2 \xi_j
\een
First we note that $g_i^2, \forall i$ follow the $\chi^2$ distribution 
with one degree of freedom, and $\E \exp(t g^2) = \inv{\sqrt{1-2t}}
\le e^{2t}$ for $t < 1/4$. Thus we have for a fixed realization of
$\delta, \xi$, and for all $\abs{\lambda} \le \inv{2\sqrt{C_3} \twonorm{A}}$, 
\bens
\E \left[ \exp(C_3 \lambda^2 s_i^2 g_i^2) | \delta, \xi\right] 
 =  \inv{\sqrt{1-2 C_3 \lambda^2  s_i^2}} \le \exp(2C_3 \lambda^2 s_i^2).
\eens
Hence for any fixed $\delta$ and $\xi$, 
and for $C_4 = 2C_3$ and $\abs{\lambda} \le \inv{2\sqrt{C_3} \twonorm{A}}$, 
we have by independence of $g_1, g_2, \ldots$, 
\ben
\nonumber
\E \left[\exp\left(C_3 \lambda^2  \twonorm{A_{\delta, \xi}
      g}^2\right) | \delta, \xi\right]  & = & 
\E \left[\exp\left(C_3 \lambda^2  \sum_{i} s_i^2 g_i^2\right) | \delta, \xi\right] \\
\nonumber
& = & \prod_{i} \E \left[ \exp(C_3 \lambda^2 s_i^2 g_i^2) | \delta,
  \xi\right] \\
\label{eq::normalbound}
& \le & \prod_{i} \exp(2C_3 \lambda^2 s_i^2).
\een
Thus we have by \eqref{eq::upperf}, \eqref{eq::xiquad} and \eqref{eq::normalbound}
\ben
\nonumber
\tilde{f}_{\delta} & \le & 
\E_{\xi}\E \left[\exp\left(C_3 \lambda^2  \twonorm{A_{\delta, \xi} g}^2\right) | \delta, \xi\right]
  \le \E \left[ \exp(2C_3 \lambda^2 \sum_{i} s_i^2) | \delta \right] \\
\label{eq::fdeltaupper}
& = &  
\E \left[\exp\left(C_4 \lambda^2 
 \sum_{i\in \Lambda_{\delta}} \xi_i  \sum_{j \in \Lambda_{\delta}^c}
 a_{ij}^2 \xi_j\right) | \delta \right].
\een
The key observation here is we are dealing with a quadratic form on
the RHS of \eqref{eq::fdeltaupper} which
is already decoupled thanks to the decoupling Step 1. \\

\noindent{\bf Step 4. Integrating out the Bernoulli random variables.}
For any given realization of $\delta$, we now need to bound the moment generating
function for the decoupled quadratic form on the RHS of \eqref{eq::fdeltaupper},
which is the content of Lemma~\ref{lemma::fdelta} where we take $t =
C_4 \lambda^2$ and conclude that for all $\delta$ and for all $\abs{\lambda} \le\inv{2\sqrt{C_4} \twonorm{A}}$,
$$\tilde{f}_{\delta} \le 
\exp\left(1.44 C_4 \lambda^2 \sum_{i\not=j}
  a_{ij}^2 p_i p_j\right).$$
\begin{lemma}
\label{lemma::fdelta}
Let $0< \tau \le \inv{4 \twonorm{A}^2}$. For any fixed realization of $\delta$,  we have
\bens
\E\left[\exp\left(\tau\sum_{i\in \Lambda_{\delta}} \xi_i 
\sum_{j \in \Lambda_{\delta}^c} a_{ij}^2 \xi_j\right) | \delta \right]
\le
\exp\left(1.44 \tau \sum_{i\not=j} a_{ij}^2 p_i p_j\right).
\eens
\end{lemma}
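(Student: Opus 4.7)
The form to be controlled is
\bens
T := \sum_{i\in\Lambda_{\delta}} \xi_i \sum_{j \in \Lambda_{\delta}^c} a_{ij}^2 \xi_j = u^T B' v,
\eens
where $u := (\xi_i)_{i\in\Lambda_{\delta}}$ and $v := (\xi_j)_{j\in\Lambda_{\delta}^c}$ are \emph{independent} Bernoulli vectors and $B'$ is the nonnegative submatrix $(a_{ij}^2)_{i\in\Lambda_{\delta},\,j\in\Lambda_{\delta}^c}$. The plan is to integrate out $v$ first, conditional on $u$, use a one-step linearization to absorb the resulting quadratic-in-$u$ remainder, and then integrate out $u$ by the same Bernoulli moment generating function estimate.

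For the first step, fix $u$ and set $c_j := \sum_{i\in\Lambda_{\delta}} a_{ij}^2 u_i$, so that $0 \le c_j \le \sum_i a_{ij}^2 \le \twonorm{A}^2$ and the hypothesis $\tau \le 1/(4\twonorm{A}^2)$ gives $\tau c_j \le 1/4$. The bounds $1 - p_j + p_j e^y \le \exp(p_j(e^y-1))$ and $e^y - 1 \le y + (e^{1/4}/2)y^2$ on $[0,1/4]$, combined with independence of the $v_j$'s, yield
\bens
\E[\exp(\tau T)\mid u] \le \exp\Bigl(\tau \sum_{i\in\Lambda_{\delta}} u_i \alpha_i + C_0 \tau^2\, u^T M u\Bigr),
\eens
where $\alpha_i := \sum_{j\in\Lambda_{\delta}^c} p_j a_{ij}^2$, $M := B' D_{p}(B')^T$ (with $D_p$ the diagonal of $\{p_j\}_{j\in\Lambda_{\delta}^c}$), and $C_0 = e^{1/4}/2 < 0.65$.

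The key step is the pointwise linearization
\bens
u^T M u = \sum_{j\in\Lambda_{\delta}^c} p_j \Bigl(\sum_i a_{ij}^2 u_i\Bigr)^2 \le \twonorm{A}^2 \sum_{j\in\Lambda_{\delta}^c} p_j \sum_i a_{ij}^2 u_i = \twonorm{A}^2 \sum_{i\in\Lambda_{\delta}} u_i \alpha_i,
\eens
which uses only $\sum_i a_{ij}^2 u_i \le \twonorm{A}^2$ and nonnegativity. Setting $\tilde{\tau} := \tau(1 + C_0 \tau \twonorm{A}^2) \le 1.17\,\tau$ therefore absorbs the quadratic exponent into a linear one, $\E[\exp(\tau T)\mid u] \le \exp(\tilde{\tau} \sum_i u_i \alpha_i)$. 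Since $\alpha_i \le \twonorm{A}^2$ gives $\tilde{\tau}\alpha_i < 0.3$, a second round of the same Bernoulli estimate produces
\bens
\E \exp(\tau T) \le \exp\Bigl(\tilde{\tau}\bigl(1 + C_0 \tilde{\tau}\twonorm{A}^2\bigr) \sum_{i\in\Lambda_{\delta},\, j\in\Lambda_{\delta}^c} p_i p_j a_{ij}^2\Bigr),
\eens
and since $\sum_{i\in\Lambda_{\delta},\, j\in\Lambda_{\delta}^c} a_{ij}^2 p_i p_j \le \tfrac12 \sum_{i\ne j} a_{ij}^2 p_i p_j$ by the symmetry $a_{ij}^2 = a_{ji}^2$, a numerical check shows $\tilde{\tau}(1 + C_0 \tilde{\tau}\twonorm{A}^2)/2 < 0.7\,\tau$, comfortably below the claimed $1.44\,\tau$.

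The nontrivial ingredient is the linearization $u^T M u \le \twonorm{A}^2 \sum_i u_i \alpha_i$. Without it one is left with a nonnegative quadratic form in Bernoullis, and a Cauchy-Schwartz split of the linear and quadratic exponents bounds the quadratic half only by $\twonorm{M}\twonorm{u}^2$, which scales with the ambient dimension rather than with $\sum_{i\ne j} p_i p_j a_{ij}^2$. The pointwise inequality $x_j^2 \le \twonorm{A}^2 \cdot x_j$ for $x_j := \sum_i a_{ij}^2 u_i \in [0,\twonorm{A}^2]$ exploits precisely the assumption $\tau \le 1/(4\twonorm{A}^2)$ that makes the recursion terminate after a single pass.
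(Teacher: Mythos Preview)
Your two-stage integration (first over $v=(\xi_j)_{j\in\Lambda_\delta^c}$, then over $u=(\xi_i)_{i\in\Lambda_\delta}$) is exactly the route the paper takes, but the paper avoids your quadratic-remainder detour entirely. Instead of bounding $e^y-1\le y + (e^{1/4}/2)y^2$ and then linearizing $u^TMu$ via $x_j^2\le \twonorm{A}^2 x_j$, the paper uses the cruder one-step estimate $e^y-1\le 1.2y$ on $[0,0.35]$, which is already linear; two passes then give the constant $1.2^2=1.44$ directly. Your sharper Taylor bound plus linearization buys a slightly better constant at the cost of an extra step, but the ``nontrivial ingredient'' you highlight is not actually needed.

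One genuine slip: the matrix $A$ in this lemma (coming from Theorem~\ref{thm::HWsparse}) is \emph{not} assumed symmetric, so the claim $a_{ij}^2=a_{ji}^2$ and the resulting factor $\tfrac12$ are unjustified. Fortunately you do not need it: dropping that factor, your constant is $\tilde\tau(1+C_0\tilde\tau\twonorm{A}^2)/\tau \le 1.17\cdot(1+0.65\cdot 0.3)\approx 1.40<1.44$, so the bound still holds with $\sum_{i\in\Lambda_\delta,\,j\in\Lambda_\delta^c}a_{ij}^2p_ip_j\le \sum_{i\ne j}a_{ij}^2p_ip_j$ in place of the halved sum. (A minor related point: in the second pass $\tilde\tau\alpha_i$ can reach about $0.29$, so strictly the second-round constant should be $e^{0.29}/2$ rather than $e^{1/4}/2$; this does not affect the conclusion.)
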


\begin{proof}
As mentioned, we are dealing with a quadratic form which
is already decoupled. Thus we integrate out $\xi_i$ for all $i \in
\Lambda_{\delta}$ followed by those in $\Lambda^c_{\delta}$.
Recall for any realization of $\delta$ and $0 < \tau \le \inv{4 \twonorm{A}^2}$
 we have  by independence of $\xi_1, \xi_2, \ldots$,
\ben
\nonumber
\lefteqn{
{f}_{\delta} :=
\E \left[\exp\left(\tau \sum_{i\in \Lambda_{\delta}} \xi_i 
\sum_{j \in \Lambda_{\delta}^c} a_{ij}^2 \xi_j\right) | \delta\right]} \\
\nonumber
& = & 
\E_{\xi_{\Lambda_{\delta}^c}} 
\E \left[\exp\left(\tau  \sum_{i\in \Lambda_{\delta}} \xi_i \sum_{j \in \Lambda_{\delta}^c} a_{ij}^2 \xi_j\right) |
\xi_{\Lambda_{\delta}^c}, \delta \right] \\
& = & 
\label{eq::start}
\E_{\xi_{\Lambda_{\delta}^c}} \prod_{i \in \Lambda_{\delta}}
 \E \left[\exp\left(\tau\xi_i \sum_{j \in \Lambda_{\delta}^c} a_{ij}^2 \xi_j\right) |
\xi_{\Lambda_{\delta}^c}, \delta \right]
\een


We will use the following approximation twice in our proof:
\ben
\label{eq::approx}
e^x -1 \le 1.2x \; \; \text{ which holds for } \; \; 0\le x \le 0.35.
\een
First notice that for all realizations of $\delta$ and $\xi$, we have for
$0< \tau \le \inv{4 \twonorm{A}^2}$
\bens
0\le \tau \sum_{j \in \Lambda_{\delta}^c}
 a_{ij}^2  \xi_j \le \tau \sum_{j} a_{ij}^2 \le 
\tau \twonorm{A}^2 \le 1/4
\eens
given that the maximum row $\ell_2$ norm of $A$ 
is bounded by the operator norm of matrix $A^T$:  $\twonorm{A^T} = \twonorm{A} = \sqrt{\lambda_{\max}(A^T  A)}$.
Hence, we have for $\abs{\lambda} \le \inv{2 \sqrt{C_4} \twonorm{A}}$,
\eqref{eq::approx} and the fact that $1+x \le e^x$,
\ben
\nonumber
\lefteqn{
\E \left[\exp\left(\tau \xi_i \sum_{j \in \Lambda_{\delta}^c} a_{ij}^2 \xi_j\right) |
\xi_{\Lambda_{\delta}^c}, \delta \right] = 
p_i\exp\left(\tau \sum_{j \in \Lambda_{\delta}^c} a_{ij}^2  \xi_j\right) + (1-p_i) }\\
& \le & 
\label{eq::quad-bernoulli}
p_i \left(1.2 \tau \sum_{j \in \Lambda_{\delta}^c} a_{ij}^2
  \xi_j\right) + 1
\le \exp\left(1.2 \tau p_i \sum_{j \in \Lambda_{\delta}^c} a_{ij}^2
  \xi_j\right).
\een
Thus we have by independence of $\xi_1, \xi_2, \ldots$, \eqref{eq::start} and \eqref{eq::quad-bernoulli} 
\ben
\nonumber
{f}_{\delta} & \le &
\E_{\xi_{\Lambda_{\delta}^c}} 
\prod_{i \in \Lambda_{\delta}} 
\exp\left(1.2 \tau p_i \sum_{j \in \Lambda_{\delta}^c} a_{ij}^2
  \xi_j\right) 
= \E_{\xi_{\Lambda_{\delta}^c}} 
\exp \left( \sum_{i \in \Lambda_{\delta}}  
1.2 \tau p_i \sum_{j \in \Lambda_{\delta}^c} a_{ij}^2 \xi_j
\right) \\
\nonumber
&  =  &
\E_{\xi_{\Lambda_{\delta}^c}} 
\exp \left(1.2 \tau \sum_{j \in \Lambda_{\delta}^c} \xi_j 
\sum_{i \in \Lambda_{\delta}}  a_{ij}^2 p_i\right)=
\prod_{j \in \Lambda_{\delta}^c} \E_{\xi_j} \exp \left( 1.2 \tau
\xi_j \sum_{i \in \Lambda_{\delta}}  a_{ij}^2 p_i\right)\\
&  =  &
\label{eq::end}
\prod_{j \in \Lambda_{\delta}^c} p_j \exp \left(1.2 \tau \sum_{i \in \Lambda_{\delta}}
  a_{ij}^2 p_i\right) + (1-p_j)
\een
where for all $\delta$ and $0< \tau \le \inv{4 \twonorm{A}^2}$, we have by the approximation in~\eqref{eq::approx}
\ben
\label{eq::approxII}
\exp\left(1.2 \tau \sum_{i \in \Lambda_{\delta}}
  a_{ij}^2 p_i\right) - 1  \le 1.44 \tau \sum_{i \in \Lambda_{\delta}}  a_{ij}^2 p_i
\een
given that  the column $\ell_2$ norm of $A$ is bounded by the operator
norm of $A$, and thus 
\bens 
1.2 \tau \sum_{i \in \Lambda_{\delta}}  a_{ij}^2 p_i
\le  1.2 \tau 
\sum_{i =1}^m a_{ij}^2 p_i 
\le 1.2 \sum_{i =1}^m a_{ij}^2 /(4\twonorm{A}^2) \le 0.3.
\eens
Now by \eqref{eq::end},~\eqref{eq::approxII} and the fact that $x + 1\le e^x$
\bens
\lefteqn{{f}_{\delta} \le 
\prod_{j \in \Lambda_{\delta}^c} p_j \left[\exp \left(1.2\tau \sum_{i \in \Lambda_{\delta}}
  a_{ij}^2 p_i\right) -1\right] + 1}\\
& \le & 
\prod_{j \in \Lambda_{\delta}^c} p_j\left(1.44 \tau \sum_{i \in \Lambda_{\delta}}
  a_{ij}^2 p_i\right) + 1  \le  
\prod_{j \in \Lambda_{\delta}^c} \exp\left(1.44 \tau
  p_j\sum_{i \in \Lambda_{\delta}} a_{ij}^2 p_i\right) \\
&  = &  
\exp\left(\sum_{j \in \Lambda_{\delta}^c}  1.44 \tau p_j \sum_{i \in \Lambda_{\delta}}
  a_{ij}^2 p_i\right) \le  \exp\left(1.44 \tau  \sum_{i\not=j} a_{ij}^2 p_i p_j\right)
\eens
The lemma thus holds.
\end{proof}

\noindent{\bf Step 5. Putting things together.}

By Jensen's inequality~\eqref{eq::Jensen}, definition of $f(\xi,
\delta,  X_{\Lambda_{\delta}})$ in~\eqref{eq::mgf-subg-cond} and \eqref{eq::mgf-subg},
we have for all $\abs{\lambda} \le\inv{2\sqrt{C_4} \twonorm{A}}$
\bens
\E \exp(\lambda S) 
& \le &
 \E_{\delta}  \E_{\xi}  \E_{X}  \exp(4 \lambda S_{\delta})  \\
& = &
 \E_{\delta}  \E_{\xi}  \E_{X_{\Lambda_{\delta}}} 
\E\left(\exp(4 \lambda S_\delta) |\xi, \delta,
  X_{\Lambda_{\delta}}\right) \\
& = &
 \E_{\delta}  \E_{\xi}  \E_{X_{\Lambda_{\delta}}} 
f(\xi, \delta,  X_{\Lambda_{\delta}})\\
& \le &
 \E_{\delta} \tilde{f}_{\delta}
 \le \exp\left(1.44C_4 \lambda^2 \sum_{i\not=j} a_{ij}^2 p_i p_j\right)
\eens
Thus \eqref{eq::mgfboundS} holds.
\end{proofof2}

\bibliography{subgaussian}

\end{document}